
\documentclass[10pt]{amsart}
\usepackage{array,youngtab}
\usepackage{color, graphicx, comment}

\DeclareMathOperator{\modsymb}{mod}
\newcommand{\tcore}{{\text{$t$-core}}}
\newcommand{\la}{\lambda}
\newcommand{\setP}{\mathcal{P}}
\newcommand{\setH}{\mathcal{H}}
\newcommand{\setC}{\mathcal{C}}
\newcommand{\setHt}{\mathcal{H}_t}
\newcommand{\setDD}{\mathcal{DD}}
\newcommand{\setSC}{\mathcal{SC}}
\newcommand{\setW}{\mathcal{W}}
\newcommand{\opDs}{{\bar D}}
\newcommand{\sla}{{\bar\lambda}}
\newcommand{\smu}{{\bar\mu}}
\newcommand{\sal}{{\bar\sal}}
\newcommand{\twolines}[2]{{{\substack{#1\\#2}}}}
\allowdisplaybreaks

\numberwithin{equation}{section} \overfullrule 5pt
\newtheorem{thm}{Theorem}[section]
\newtheorem{cor}[thm]{Corollary}
\newtheorem{lem}[thm]{Lemma}

\theoremstyle{definition}

\title[Polynomiality of some hook-content summations]{Polynomiality of some hook-content summations for doubled distinct  and self-conjugate partitions }
\date{December 25, 2015}
\author{Guo-Niu HAN and Huan XIONG}
\address{I.R.M.A., UMR 7501, Universit\'e de Strasbourg
et CNRS, 7 rue Ren\'e Descartes, F-67084 Strasbourg, France}
\email{guoniu.han@unistra.fr}

\address{I-Math, Universit\"at Z\"urich,
  Winterthurerstrasse 190, Z\"urich 8057,
Switzerland} \email{huan.xiong@math.uzh.ch}

\subjclass[2010]{05A15, 05A17, 05A19,  05E05, 05E10, 11P81}

\keywords{strict partition, doubled distinct partition,  self-conjugate partition, hook length, content, shifted 
Young tableau, difference operator}

\begin{document}
\begin{abstract} 
In 2009, the first author proved
the Nekrasov-Okounkov formula on hook lengths for integer partitions
by using an identity of Macdonald in the framework of type $\widetilde A$ affine root systems,
and conjectured that some summations over the set of all partitions of size $n$
are
always polynomials in $n$.
This conjecture  was generalized and
proved by Stanley.
Recently, P\'etr\'eolle derived two Nekrasov-Okounkov type formulas for $\widetilde C$ and $\widetilde C\,\check{}$ which involve doubled distinct and self-conjugate partitions. 
Inspired by all those previous works, we establish 
the
polynomiality of
some hook-content summations for doubled distinct  and self-conjugate 
partitions.
\end{abstract}

\maketitle

\section{Introduction} \label{sec:introduction}  
The following so-called Nekrasov-Okounkov formula
$$
\sum_{\la\in \setP}q^{|\la|} \prod_{h\in\setH(\la)}\bigl(1-\frac {z}{h^2}\bigr) \ =\ \prod_{k\geq 1} { (1-q^k)^{z-1}},
$$
where
$\setP$ is the set of all
integer partitions $\lambda$ with $|\lambda|$ denoting the size of  $\lambda$ and $\setH(\lambda)$ the 
multiset of hook lengths associated with $\lambda$ (see \cite{han2}),
was discovered independently several times: First, by Nekrasov and Okounkov 
in their study of the theory of Seiberg-Witten on supersymmetric gauges in
particle physics \cite{no}; Then, proved by Westbury using D'Arcais polynomials \cite{westbury}; 
Finally, by the first author using an identity of Macdonald \cite{Macdonald} in the framework of type $\widetilde A$ affine root systems~\cite{han2}.
Moreover, he asked to find Nekrasov-Okounkov type formulas 
associated with other root systems
\cite[Problem 6.4]{han},
and conjectured that
$$n! \sum_{|\lambda|= n}
\frac{1}{ H(\la)^2}\sum_{h\in\setH(\lambda)}h^{2k} $$ 
is always a
polynomial in $n$ for any $k\in \mathbb{N}$,
where
$H(\la)=\prod_{h\in\setH(\la)} h$.
This conjecture  was 
proved by Stanley in a more general form. In particular, he showed that
$$n! \sum_{|\lambda|= n}
\frac{1}{ H(\la)^2}\ F_1(h^2: h\in\setH(\lambda))\, F_2(c: c\in\setC(\lambda)) $$ 
is a
polynomial in $n$ for any symmetric functions $F_1$ and $F_2$, where
$\setC(\la)$ is the multiset of contents associated with $\la$ (see \cite{stan}).  
For some special functions $F_1$ and $F_2$ the latter polynomial has
explicit expression, as shown by  Fujii, Kanno, Moriyama, Okada and Panova \cite{fkmo,panova}.
\medskip

A \emph{strict partition} is a finite strict decreasing sequence of
positive integers $\bar\lambda = (\bar\lambda_1, \bar\lambda_2, \ldots,
\bar\lambda_\ell)$. The integer  $|\bar\lambda|=\sum_{1\leq i\leq
\ell}\bar\lambda_i$ is called the \emph{size} and $\ell(\sla)=\ell$ is called the {\it length} of $\bar\lambda.$ For convenience, let $\bar\lambda_i=0$ for $i>\ell(\bar\lambda)$.   A strict partition $\bar\lambda$ could be identical with
its shifted Young diagram, which means that the $i$-th row of the
usual Young diagram is shifted to the right by $i$ boxes. 
We define the \emph{doubled distinct partition} 
of $\bar\la$, denoted by $\bar\la\bar\la$,
 to be the usual partition
whose Young diagram is obtained by adding $\bar\la_i$ boxes to the $i$-th column of
the shifted Young diagram of $\bar\la$
for $1 \leq i \leq \ell(\bar\la)$ (see \cite{stanton,petreolle1,petreolle2}). For example, $(6,4,4,1,1)$ is the doubled distinct partition of $(5,2,1)$ (see  Figure 1).

\begin{figure}
\centering
\begin{center}
\includegraphics[]{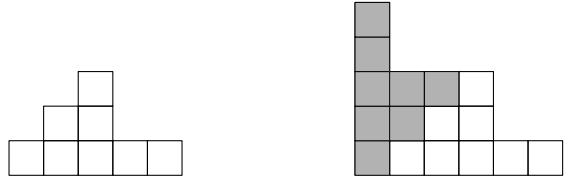}
\end{center}
\caption{From strict partitions  to doubled distinct partitions.}
\end{figure}

For each usual partition $\la$, let $\la'$ denote the conjugate partition of $\la$ (see \cite{stanton,Macdonald,petreolle1,petreolle2}).
A usual partition $\la$ is called \emph{self-conjugate} if $\la=\la'$.
The set of all doubled distinct partitions and the set of all self-conjugate partitions
are denoted by $\setDD$ and $\setSC$ respectively.
For each positive integer $t$,
let
$$
{\mathcal{H}}_t(\lambda)=\{ h\in\mathcal{H}(\lambda): \ h\equiv 0 \pmod t\}
$$
be  the multiset
of  hook lengths of multiples of $t$.
Write $H_t(\la)=\prod_{h\in\setH_t(\la)} h$.

Recently, P\'etr\'eolle derived two Nekrasov-Okounkov type formulas for $\widetilde C$ and $\widetilde C\,\check{}$ which involve doubled distinct and self-conjugate partitions. 
In particular, he obtained the following two formulas \cite{petreolle1,petreolle2}.

\begin{thm}[P\'etr\'eolle \cite{petreolle1,petreolle2}]\label{th:petreolle}
	For positive integers $n$ and $t$ we have
\begin{align}
	\sum_\twolines{\la\in\setDD, |\la|=2nt}{\#\setHt(\la)=2n}
	\quad
	\frac 1{H_t(\la)} = \frac{1}{(2t)^n n!},\quad \text{if $t$ is odd};
	\label{eq:DDt}\\
	\sum_\twolines{\la\in\setSC, |\la|=2nt}{\#\setHt(\la)=2n}
	\quad
	\frac 1{H_t(\la)} = \frac{1}{(2t)^n n!},\quad \text{if $t$ is even}.\label{eq:SCt}
\end{align}
\end{thm}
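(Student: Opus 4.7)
The strategy is to reduce both identities via the $t$-core/$t$-quotient bijection. The conditions $|\la|=2nt$ and $\#\setHt(\la)=2n$ together force the $t$-core of $\la$ to be empty, so the classical bijection (see \cite{Macdonald}) identifies each such $\la$ with an ordered tuple $(\la^{(0)},\ldots,\la^{(t-1)})$ of ordinary partitions with $\sum_i|\la^{(i)}|=2n$, and under it
\[
H_t(\la)\ =\ t^{2n}\prod_{i=0}^{t-1}H(\la^{(i)}).
\]
It therefore suffices to identify which tuples correspond to self-conjugate (resp.\ doubled distinct) $\la$, and to sum the resulting expression.

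The second step is a symmetry analysis of the $t$-quotient in each case. For self-conjugate $\la$, it is classical (in a suitable normalization, see \cite{petreolle2,stanton}) that $\la^{(i)}=(\la^{(t-1-i)})'$ for all $i$; when $t=2s$ is even, the involution $i\mapsto t-1-i$ has no fixed point, so these $\la$ correspond to free choices of $(\la^{(0)},\ldots,\la^{(s-1)})$. For doubled distinct $\la$ with $t=2s+1$ odd, I expect the same paired-conjugate symmetry, with the central fixed component $\la^{(s)}$ itself a doubled distinct partition of even size $2m$; this should follow by analyzing how the defining condition $\la_i-\la'_i\in\{0,1\}$ along the diagonal translates to the $t$-abacus.

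Once the symmetry is in hand, the sums collapse via $H(\mu)=H(\mu')$. The self-conjugate sum with $t=2s$ becomes
\[
\frac{1}{t^{2n}}\sum_{n_0+\cdots+n_{s-1}=n}\ \prod_{i=0}^{s-1}\ \sum_{|\mu_i|=n_i}\frac{1}{H(\mu_i)^2},
\]
and the doubled distinct sum with $t=2s+1$ becomes the analogous expression with an additional factor $\sum_{\nu\in\setDD,|\nu|=2m}1/H(\nu)$ at the central index. Using the classical identity $\sum_{|\mu|=m}1/H(\mu)^2=1/m!$ (immediate from $f^\mu=m!/H(\mu)$ together with $\sum_{|\mu|=m}(f^\mu)^2=m!$) and the $t=1$ base case of \eqref{eq:DDt}, namely $\sum_{\nu\in\setDD,|\nu|=2m}1/H(\nu)=1/(2^m m!)$ (provable via the shifted hook length formula, shifted RSK, and a conversion between $H(\sla\sla)$ and $H^*(\sla)$), both sums reduce to elementary exponential generating function computations, yielding respectively $s^n/(t^{2n}n!)$ and $(s+1/2)^n/(t^{2n}n!)=(t/2)^n/(t^{2n}n!)$; each equals $1/((2t)^n n!)$ upon substituting $t=2s$ or $t=2s+1$.

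The main obstacle I expect is the doubled distinct quotient symmetry (step two for $t$ odd): in contrast with the self-conjugate case, where $\la=\la'$ translates transparently to the quotient, the near-diagonal asymmetry of $\setDD$ requires a careful beta-set/abacus computation to verify that the central component at $t=2s+1$ is itself doubled distinct of even size. A secondary difficulty is securing the $t=1$ DD identity used above, which depends on a suitable passage between hook products of $\sla\sla$ and shifted hook products of $\sla$.
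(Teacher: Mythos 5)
Your proposal is sound, and the computations check out: the conditions $|\la|=2nt$, $\#\setHt(\la)=2n$ do force an empty $t$-core via (P2)--(P3), the identity $H_t(\la)=t^{2n}\prod_i H(\la^{(i)})$ is immediate from (P3), and the multinomial/exponential collapse using $\sum_{|\mu|=m}1/H(\mu)^2=1/m!$ and $\sum_{\nu\in\setDD,\,|\nu|=2m}1/H(\nu)=1/(2^m m!)$ (the latter exactly as you sketch, via $H(\sla\sla)=2^{\ell(\sla)}H(\sla)^2$ and $\sum 2^{m-\ell(\sla)}f_{\sla}^2=m!$) yields $1/((2t)^n n!)$ in both cases. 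The "main obstacle" you flag is in fact a known citation rather than a gap: the Garvan--Kim--Stanton result (the paper's reference [stanton], restated in Section 3 and at the start of Section 5) says precisely that for $t$ odd a doubled distinct partition has quotient consisting of one doubled distinct component together with conjugate pairs (and DD $t$-core), and for $t$ even a self-conjugate partition has quotient $\la^i=(\la^{t-1-i})'$ with no self-paired component; whether the distinguished DD component sits at index $0$ (the paper's convention) or at the central index is a labelling convention and is immaterial to your sum. Your route differs from the paper's: the paper does not evaluate the quotient sum directly but instead builds the functions $F_\la$, $G_\la$ on the same Littlewood decomposition and proves the operator identity $D_t G_\la=0$ (Lemmas \ref{th:Glambda} and \ref{th:SCGlambda}, resting on box-addition hook-ratio lemmas from its companion papers), after which telescoping (Lemma \ref{th:telescope}) with $\mu=\emptyset$ gives \eqref{eq:DDt} and \eqref{eq:SCt} as the constant case of its general polynomiality machinery. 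Your argument is more elementary and self-contained, using only the closed-form hook length/RSK identities and an EGF computation, but it proves only the stated theorem; the paper's operator approach buys the refined skew version $\sum_{\la\geq_t\mu}F_{\la/\mu}G_\la=G_\mu$ for arbitrary $\mu$ and the framework needed for the hook-content polynomiality results that are the paper's main point.
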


Inspired by all those previous works, we establish 
the
polynomiality of
some hook-content summations for doubled distinct  and self-conjugate 
partitions. Our main result is stated next.

\begin{thm}\label{th:intro}
Let $t$ be a given positive integer. The following two summations for the positive integer $n$
\begin{equation}\label{eq:DD}
(2t)^n n!	\sum_\twolines{\la\in\setDD, |\la|=2nt}{\#\setHt(\la)=2n}
	\quad
	\frac {F_1(h^2: h\in\setH(\la))\,F_2(c: c\in\setC(\la))}{H_t(\la)}
	\quad \text{$($$t$ odd\,$)$}
\end{equation}
and 
\begin{equation}\label{eq:SC}
(2t)^n n!	\sum_\twolines{\la\in\setSC, |\la|=2nt}{\#\setHt(\la)=2n}
	\quad
	\frac {F_1(h^2: h\in\setH(\la))\,F_2(c: c\in\setC(\la))}{H_t(\la)}
	\quad \text{$($$t$ even\,$)$}
\end{equation}
are polynomials in $n$ for any symmetric functions $F_1$ and $F_2$.
\end{thm}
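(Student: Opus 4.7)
Our strategy parallels Stanley's argument for ordinary partitions~\cite{stan}: use the $t$-quotient bijection to rewrite the sum as a multiple sum over ordinary partitions, then invoke Stanley's polynomiality theorem on each factor of the resulting product.

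First, under the constraints $|\la|=2nt$ and $\#\setHt(\la)=2n$, the standard identity $|\la|=|\tcore(\la)|+t\cdot\#\setHt(\la)$ forces $\tcore(\la)$ to be empty, so we may restrict to $\la\in\setDD$ (respectively $\setSC$) with empty $t$-core. For $t$ odd and $\la\in\setDD$, the $t$-quotient $(\mu^{(0)},\ldots,\mu^{(t-1)})$ carries a symmetry inherited from the doubled-distinct structure: $\mu^{(t-1-i)}$ is the conjugate of $\mu^{(i)}$ for $i\neq(t-1)/2$, while the central component $\mu^{((t-1)/2)}$ is itself doubled distinct. An analogous ``conjugate pairs plus central symmetric piece'' description holds for $\la\in\setSC$ with $t$ even. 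In both cases the free data is $(t-1)/2$ ordinary partitions plus one symmetric central piece, and the size relation becomes $|\mu^{((t-1)/2)}|+2\sum_{i<(t-1)/2}|\mu^{(i)}|=2n$.

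Second, we express the summand in quotient coordinates. The bijection gives $\setHt(\la)=\bigsqcup_i\{\,t h:h\in\setH(\mu^{(i)})\,\}$, whence
\begin{equation*}
H_t(\la)\;=\;t^{2n}\prod_{i=0}^{t-1}H(\mu^{(i)})\;=\;t^{2n}\,H(\mu^{((t-1)/2)})\prod_{i<(t-1)/2}H(\mu^{(i)})^{2},
\end{equation*}
using $H(\mu')=H(\mu)$ on the paired components. Moreover the full multisets $\{h^2:h\in\setH(\la)\}$ and $\{c:c\in\setC(\la)\}$ are recoverable from the abacus (beta-number) data of the $\mu^{(i)}$; since $F_1,F_2$ are symmetric and the conjugate-paired components enter symmetrically, $F_1(h^2:h\in\setH(\la))\,F_2(c:c\in\setC(\la))$ becomes a symmetric-function expression in the hooks-squared and contents of each quotient component.

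Third, the summation factorizes into a weighted convolution
\begin{equation*}
\sum \Bigl(\prod_{i<(t-1)/2}B_i(|\mu^{(i)}|)\Bigr)\,D(|\mu^{((t-1)/2)}|),
\end{equation*}
where each $B_i(m)=m!\sum_{|\mu|=m}G_i(\mu)/H(\mu)^2$ is of Stanley's type and $D$ is an analogous sum over doubled distinct (respectively self-conjugate) partitions carrying $1/H$ in place of $1/H^2$. Stanley's theorem makes each $B_i(m)$ polynomial in $m$; a doubled-distinct (respectively self-conjugate) analogue of Stanley's theorem, proved along the way via the same beta-number machinery, makes $D(m)$ polynomial in $m$. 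A convolution of polynomials subject to a linear constraint on indices is polynomial in the total, establishing \eqref{eq:DD}; the proof of \eqref{eq:SC} is entirely parallel.

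The main obstacle is step two: one must derive precise abacus formulas expressing \emph{all} hook lengths and contents of $\la$, not merely those divisible by $t$, in terms of the $t$-quotient for $\la\in\setDD$ or $\la\in\setSC$, and then recognize the result as a symmetric function of the individual components. Establishing the doubled-distinct / self-conjugate analogue of Stanley's polynomiality theorem needed for the central piece $D$ is a secondary but genuinely new ingredient; it fits naturally into the same framework once the abacus description is in place.
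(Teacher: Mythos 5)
Your overall plan---kill the $t$-core, pass to the $t$-quotient, and reduce to Stanley-type polynomiality statements for the individual quotient components---founders at exactly the point you flag as ``the main obstacle,'' and that obstacle is not a technicality but the entire content of the theorem. The hooks of $\la$ divisible by $t$ do split over the quotient (property (P3)), and the contents do decompose additively (this is Lemma~\ref{th:contentadd1}), but the hooks of $\la$ \emph{not} divisible by $t$ are read off the abacus as differences of bead and gap positions lying on two \emph{different} runners: their multiset depends on the relative interleaving of the beta-sets of two quotient components, not on the individual hook or content multisets of each component. Consequently $F_1(h^2:h\in\setH(\la))$ does not ``become a symmetric-function expression in the hooks-squared and contents of each quotient component,'' the summand does not factor over the components, and the convolution in your third step never materializes; Stanley's theorem cannot be invoked componentwise. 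This cross-runner coupling is precisely what the paper spends its effort on: Lemma~\ref{th:hookdiff} only controls the \emph{increment} of $\sum_h h^{2r}$ when a single box is added to one component, expressing it as a polynomial in the new box's content whose coefficients involve corner data of \emph{all} components, and the paper then replaces your factorization by a difference-operator calculus ($D_t$, the admissibility notion of Lemma~\ref{th:admissible}, the telescoping identity of Theorem~\ref{th:main3}, and the induction of Theorem~\ref{th:main}), with the $t=1$ strict-partition case (your ``doubled-distinct analogue of Stanley with weight $1/H$'') handled separately in Section~2 by the operator $\opDs$. Without an argument of this kind---or a genuine proof that the full hook multiset lies in the algebra generated by per-component symmetric functions---your proof does not go through.

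Two further points. First, your description of the self-conjugate case is wrong: for $t$ even the quotient of $\la\in\setSC$ satisfies $\la^i=(\la^{t-1-i})'$, a fixed-point-free pairing, so the free data is $t/2$ ordinary partitions and there is \emph{no} central symmetric piece (your count ``$(t-1)/2$ ordinary partitions plus one central piece'' is not even an integer statement here); it is the doubled distinct case with $t$ odd that has one distinguished self-paired component, which in the paper's convention is $\la^0=\sla^0\sla^0$. Second, the final convolution step, even granted factorization, is stated too loosely: the actual weights are multinomial, $(2t)^n n!/H_t(\la)=\binom{n}{m_0,\ldots}\,2^{n-m_0}t^{-n}\prod_i(\text{Plancherel-type weights})$, so a bare ``convolution of polynomials is polynomial'' is not the right statement---one needs the cancellation of $t^{-n}$ against $\sum\binom{n}{m_0,\ldots}2^{n-m_0}=t^n$ (this is fixable, but it should be said).
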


In fact, the degrees of the two polynomials in Theorem \ref{th:intro} 
can be estimated explicitly  in terms of $F_1$ and $F_2$ (see Corollary \ref{th:main''} and Theorem \ref{th:SCmain3}).
When $F_1$ and $F_2$ are two constant symmetric functions, we derive Theorem \ref{th:petreolle}.
Other specializations are listed as follows.
\begin{cor} \label{th:DDSCsquare}
 We have
\begin{align}
(2t)^n n!\sum\limits_{\substack{\lambda \in \mathcal{DD}, 
|\lambda|=2nt \\ \#\mathcal{H}_t(\lambda)=2n} } 
\frac{1}{H_t(\la)}\, \sum\limits_{\substack{h\in \setH(\lambda)}}h^{2}
&=6t^2 n^2+\frac 13 (t^2-6t+2) tn \quad \text{$($$t$ odd\,$)$},\label{eq:DDh2}\\
(2t)^n n!\sum\limits_{\substack{\lambda \in \mathcal{SC}, 
|\lambda|=2nt \\ \#\mathcal{H}_t(\lambda)=2n} } 
\frac{1}{H_t(\la)}\, \sum\limits_{\substack{h\in \setH(\lambda)}}h^{2}
&=6t^2 n^2+\frac 13 (t^2-6t-1) tn \quad \text{$($$t$ even\,$)$},\label{eq:SCh2}\\
(2t)^n n!\sum\limits_{\substack{\lambda \in \mathcal{DD}, 
|\lambda|=2nt \\ \#\mathcal{H}_t(\lambda)=2n} } 
\frac{1}{H_t(\la)}\, \sum\limits_{\substack{c\in \setC(\lambda)}}c^{2}
&=2t^2 n^2+\frac 13 (t^2-6t+2) tn \quad \text{$($$t$ odd\,$)$},\label{eq:DDc2}\\
(2t)^n n!\sum\limits_{\substack{\lambda \in \mathcal{SC}, 
|\lambda|=2nt \\ \#\mathcal{H}_t(\lambda)=2n} } 
\frac{1}{H_t(\la)}\, \sum\limits_{\substack{c\in \setC(\lambda)}}c^{2}
&=2t^2 n^2+\frac 13 (t^2-6t-1) tn \quad \text{$($$t$ even\,$)$}.\label{eq:SCc2}
\end{align}
\end{cor}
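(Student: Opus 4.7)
My plan is to combine the polynomiality and degree bounds from Theorem~\ref{th:intro} with the identity
\[
\sum_{h\in\setH(\la)} h^2 \;-\; \sum_{c\in\setC(\la)} c^2 \;=\; |\la|^2,
\]
valid for every partition $\la$ (easily checked on small cases, and derivable from the content/hook expansions in \cite{Macdonald}). Applying Theorem~\ref{th:intro} with the power-sum specializations $F_1=p_1,\,F_2=1$ and $F_1=1,\,F_2=p_2$, all four left-hand sides are polynomials in $n$, and the sharper degree estimates promised in Corollary~\ref{th:main''} and Theorem~\ref{th:SCmain3} cap each degree at $2$.

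Since $|\la|=2nt$ is constant on the summation range, summing the identity above against $1/H_t(\la)$ and invoking Theorem~\ref{th:petreolle} gives, for both the $\setDD$-sum with $t$ odd and the $\setSC$-sum with $t$ even,
\[
(2t)^n n!\sum_\twolines{|\la|=2nt}{\#\setHt(\la)=2n}\frac{\sum_{h\in\setH(\la)} h^2-\sum_{c\in\setC(\la)} c^2}{H_t(\la)} \;=\;(2nt)^2\;=\;4t^2n^2,
\]
which matches the difference of the proposed right-hand sides: both (\ref{eq:DDh2})$-$(\ref{eq:DDc2}) and (\ref{eq:SCh2})$-$(\ref{eq:SCc2}) equal $4t^2n^2$. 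Hence it suffices to establish (\ref{eq:DDc2}) and (\ref{eq:SCc2}).

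Each of these is a polynomial in $n$ of degree at most $2$ that vanishes at $n=0$, since the only summand there is the empty partition with empty content multiset. So it is pinned down by its values at $n=1$ and $n=2$, which I would compute by direct enumeration: for $n=1$ the sum ranges over doubled distinct (resp.\ self-conjugate) partitions of $2t$ with exactly two $t$-divisible hooks, and for $n=2$ over those of $4t$ with four such hooks. The $t$-abacus / $t$-quotient machinery underlying the proof of Theorem~\ref{th:intro} parametrizes such $\la$ by short bead configurations, which should make the finite sums tractable in closed form in $t$.

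The main obstacle is extracting the precise $t$-dependence of the linear coefficients, namely $(t^2-6t+2)t/3$ in the $\setDD$ case versus $(t^2-6t-1)t/3$ in the $\setSC$ case. This is where the parity hypothesis on $t$ in Theorem~\ref{th:intro} forces a switch between two different abacus models, and where the delicate bookkeeping of which beads carry the $t$-divisible hooks together with the contents they record is what produces the differing constants $+2$ and $-1$.
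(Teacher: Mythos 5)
Your reduction of the hook identities to the content identities is exactly the paper's: the classical identity $\sum_{h\in\setH(\la)}h^2-\sum_{c\in\setC(\la)}c^2=|\la|^2$, together with P\'etr\'eolle's formulas \eqref{eq:DDt}--\eqref{eq:SCt}, adds $4t^2n^2$ to the content sums, and indeed \eqref{eq:DDh2}$-$\eqref{eq:DDc2} and \eqref{eq:SCh2}$-$\eqref{eq:SCc2} both equal $4t^2n^2$. The polynomiality/degree-$\le 2$ framework and the observation that the polynomial vanishes at $n=0$ are also sound (summing Corollary \ref{th:main''}, resp.\ Theorem \ref{th:mainSC}, over the residues $j'$ gives degree at most $2$ for $\sum_c c^2$).

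However, there is a genuine gap: the actual content of the corollary is the closed-form linear coefficient $\frac13(t^2-6t+2)t$ versus $\frac13(t^2-6t-1)t$, and your proposal never produces it. You reduce to \eqref{eq:DDc2} and \eqref{eq:SCc2} and then say you ``would compute'' the values at $n=1$ and $n=2$ by direct enumeration, explicitly flagging the $t$-dependence of these finite sums as ``the main obstacle'' --- but that obstacle is the proof. Note that, by \eqref{eq:main3DD}/\eqref{eq:main3} with $\mu=\emptyset$, your two interpolation values are precisely $D_tg(\emptyset)$ and $2D_tg(\emptyset)+D_t^2g(\emptyset)$ for $g(\la)=G_\la\sum_{c\in\setC(\la)}c^2$, so carrying out your plan in closed form in $t$ is equivalent to what the paper actually does: it computes, via the content increments (C1)--(C2) from the proof of Lemma \ref{th:admissible} together with Lemma \ref{th:aibi}, that $\frac{1}{G_\la}D_t\bigl(G_\la\sum_c c^2\bigr)=2t|\la|+\frac{t(t^2+2)}{3}$ in the doubled distinct case and $2t|\la|+\frac{t(t^2-1)}{3}$ in the self-conjugate case, hence $D_t^2=4t^2G_\la$ and $D_t^3=0$, and then reads off the polynomials from Theorems \ref{th:main3} and \ref{th:SCmain3}. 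Until you perform that bookkeeping (the sums $\sum_{j=0}^{t-1}(tc+b_i-j)^2$ over the abacus residues, which is exactly where $+2$ versus $-1$ arises), the right-hand sides of \eqref{eq:DDc2} and \eqref{eq:SCc2} remain unverified, so the proposal is an outline rather than a proof.
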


The rest of the paper is essentially devoted to complete the proof of  Theorem~\ref{th:intro}. 
The polynomiality of summations in \eqref{eq:DD} for $t=1$ with $F_1=1$ or $F_2=1$ has an equivalent statement in terms of strict partitions, whose proof is given in Section 2. 
After recalling some basic definitions and properties of Littlewood decomposition in Section 3, the doubled distinct and self-conjugate cases of Theorem \ref{th:intro} are proved in Sections 4 and 5 respectively.
Finally, Corollary \ref{th:DDSCsquare} is proved in Section 6.


\section{Polynomiality for strict and doubled distinct  partitions} \label{sec:main}  

In this section we prove an equivalent statement  of the polynomiality of \eqref{eq:DD} for $t=1$ with $F_1=1$ or $F_2=1$, which consists a summation over the set of strict partitions.
Let $\sla=(\sla_1, \sla_2, \ldots, \sla_\ell)$ be a strict partition. Therefore the leftmost box in the $i$-th row of the shifted Young diagram of $\sla$
has coordinate $(i,i+1)$. 
The \emph{hook length} of 
the $(i, j)$-box, 
denoted by $h_{(i, j)}$, is defined to be  the number of boxes exactly to the
right, or  exactly above, or the box itself, plus~$\sla_{j}$.
For example, consider the box $\square=(i,j)=(1,3)$ in the shifted Young diagram of the strict partition $(7,5,4,1)$.
There are 1 and 5 boxes below and to the right of the box $\square$ respectively. Since $\sla_3=4$, the hook length of $\square$ is equal to $1+5+1+4=11$, as illustrated in Figure~1.
The {\it content} of $\square=(i,j)$ is defined to be $c_\square=j-i$,
so that the leftmost box in each row has content $1$.
Also, 
let $\mathcal{H}(\sla)$ be the multi-set of hook lengths of boxes
and
$H({\sla})$ be the product of all hook lengths of boxes in~$\sla$.
The hook length and content multisets of the doubled distinct partition 
$\sla\sla$ can be 
obtained from $\setH(\sla)$ and $\setC(\sla)$ by the following relations:
\begin{align}
	\setH(\sla\sla)&=
\setH(\sla) \cup\setH(\sla) \cup \{2\sla_1, 2\sla_2, \ldots, 2\sla_{\ell}\} 
\setminus \{\sla_1, \sla_2, \ldots, \sla_\ell\},\label{eq:Hlala}\\
\setC(\sla\sla)&= \setC(\sla) \cup \{1-c \mid c\in \setC(\sla)\}. \label{eq:Clala}
\end{align}

\medskip

\begin{figure}
\centering
\begin{center}
\includegraphics[]{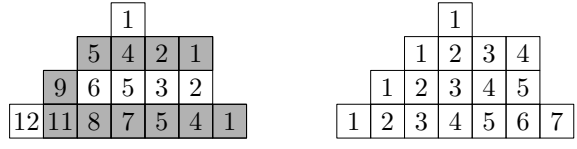}
\end{center}
\caption{The shifted
 Young diagram, the hook lengths and the contents  of the strict partition $(7,5,4,1)$.}
\end{figure}

For two strict partitions $\sla$ and $\smu$, we write $\sla \supseteq \smu$
if $\sla_i\geq \smu_i$ for any $i\geq 1$.
In this case,  the skew strict partition $\sla/\smu$ is identical with the
skew shifted Young diagram. For example, the skew strict partition $(7,5,4,1)/(4,2,1)$ is represented by the white boxes in Figure 2.
Let $f_\sla$ (resp.
$f_{\sla/\smu}$) be the number of standard shifted Young tableaux of shape
$\sla$ (resp. $\sla/\smu$).  The following formulas for strict partitions are well-known (see \cite{bandlow, schur,  thr}):
\begin{equation}
	\label{eq:hookformula*} f_\sla = \frac{|\sla| !}{H(\sla)}
\qquad \text{and}\qquad \frac{1}{n!}\sum_{|\sla| =n}
2^{n-\ell(\sla)}f_\sla^2=1.
\end{equation}
Identity \eqref{eq:DDt} with $t=1$, obtained by P\'etr\'eolle, becomes
$$
\sum_{\la\in\setDD, |\la|=2n}
	\frac 1{H(\la)} = \frac{1}{2^n n!},
$$
which is equivalent to the second identity of \eqref{eq:hookformula*} in view of \eqref{eq:Hlala}. 

\medskip

\begin{figure}
\centering
\begin{center}
\includegraphics[]{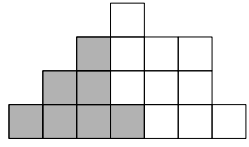}
\end{center}
\caption{The skew shifted Young diagram of the skew strict partition $(7,5,4,1)/(4,2,1)$.}
\end{figure}

\begin{figure}
\centering
\begin{center}
\includegraphics[]{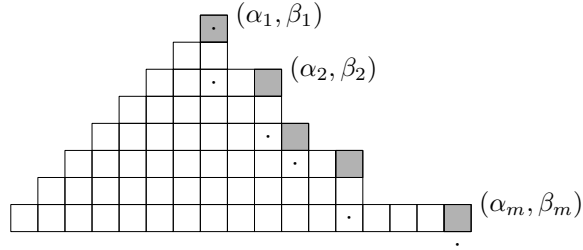}
\end{center}
\caption{A strict partition and its corners. The outer corners are labelled with
$(\alpha_i, \beta_i)$ ($i=1,2,\ldots, m$). The inner corners are indicated by
the dot symbol ``$\cdot$''.}
\end{figure}

For a strict partition $\sla$, the \emph{outer corners} (see 
\cite{hanxiong3}) are the boxes which can be removed 
in such a way that after removal the resulting diagram is still a shifted Young diagram of a strict partition.
The coordinates of outer corners are denoted by
$(\alpha_1,\beta_1),\ldots,(\alpha_{m},\beta_{m})$  such that
$\alpha_1>\alpha_2>\cdots
>\alpha_m$. Let $y_j:=\beta_j-\alpha_j$ ($1\leq j \leq m$) be the contents of outer corners. We set $\alpha_{m+1}=0,\
\beta_0=\ell(\sla)+1$ and call
$(\alpha_1,\beta_0),(\alpha_2,\beta_1),\ldots,(\alpha_{m+1},\beta_{m})$
 the  \emph{inner corners} of $\sla$. Let $x_i=\beta_i-\alpha_{i+1}$
 be the contents of inner corners for $0\leq i \leq m$ (see Figure 3).
The following relation of $x_i$ and $y_j$ are obvious.
\begin{equation}
x_0=1\leq y_1<x_1<y_2<x_2<\cdots <y_m <x_m.
\end{equation}
Notice that $x_0= y_1=1$ iff $\sla_{\ell(\sla)}=1$.
Let $\sla^{i+}=\sla\bigcup \{ \square_i\}$ such that
$c_{\square_i}=x_i$ for $0\leq i\leq m$. 
Here $\sla^{0+}$ does not exist if $y_1=1$.
The set of contents of inner corners and the set of contents of outer corners of $\sla$ are 
denoted by $X(\sla)=\{x_0,x_1,\ldots, x_m\}$ and $Y(\sla)=\{y_1, y_2, \ldots,y_m\}$ respectively. 
The following relations between the hook lengths of $\sla$ and $\sla^{i+}$
are established in \cite{hanxiong3}.

\begin{thm}[Theorem 3.1 of \cite{hanxiong3}] \label{th:hanxiong3}
Let $\sla$ be a strict partition with
$X(\sla)=\{x_0,x_1,\ldots, x_m\}$ and $Y(\sla)=\{y_1, y_2, \ldots,y_m\}$. 
For $1\leq i \leq m$, we have
\begin{align*}
	&\mathcal{H}(\sla)
	\cup
	 \{ 1,x_i, 2x_i-2 \} \cup  
\{ |x_i-x_j|:1\leq j \leq m, j\neq i \} \\ 
& \qquad\qquad \cup \{x_i+x_j-1:1\leq j \leq m, j\neq i \}  \\
=\,&
	\mathcal{H}(\sla^{i+})
\cup
\{ |x_i-y_j|:1\leq j \leq m \} \cup \{ x_i+y_j-1:1\leq j \leq m \}
\end{align*}
and
\begin{equation*}
	\frac{H(\sla)}{H(\sla^{i+})}=\frac12\cdot\frac{\prod\limits_{\substack{1\leq
j\leq m}}\bigl(\binom{x_i}{2}-\binom{y_j}{2}\bigr)}{\prod\limits_{\substack{0\leq j\leq m\\
j\neq i}}\bigl(\binom{x_i}{2}-\binom{x_j}{2}\bigr)}.
\end{equation*}
If $y_1>1$, we have
 \begin{align*}
 &\mathcal{H}(\sla) \cup
 \{ 1,x_1,x_1-1,x_2,x_2-1,\cdots, x_m,x_m-1 \} \\
 =\,&
\mathcal{H}(\sla^{0+})\ \cup 
\{ y_1,y_1-1,y_2,y_2-1,\cdots, y_m,y_m-1 \}
\end{align*}
and
\begin{equation*}
	\frac{H({\sla})}{H({\sla^{0+}})}=\frac{\prod\limits_{\substack{1\leq
j\leq
m}}\bigl(\binom{x_0}{2}-\binom{y_j}{2}\bigr)}{\prod\limits_{\substack{1\leq
j\leq m}}\bigl(\binom{x_0}{2}-\binom{x_j}{2}\bigr)}.
\end{equation*} 
\end{thm}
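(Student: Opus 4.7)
The plan is to prove the multiset identity by a boxwise comparison between $\setH(\sla)$ and $\setH(\sla^{i+})$, and then to derive the ratio formula by multiplying out and applying a standard telescoping identity. The starting observation is that adding the single box $\square_i$ at the inner corner of content $x_i$ affects only finitely many hook lengths: those in the row and column of $\square_i$, together with boxes related to them by the diagonal reflection that is built into the shifted structure. Every other hook length is unchanged.

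The modified hook lengths can be re-indexed by the contents $\{x_j\}_{j\neq i}$ and $\{y_j\}$ of the remaining corners of $\sla$. Each outer corner $y_j$ contributes a pair $\{|x_i-y_j|,\,x_i+y_j-1\}$ to $\setH(\sla^{i+})$: one member measures the horizontal distance from $\square_i$ to the profile jump at $y_j$, the other measures the corresponding ``reflected'' length through the shifted leg. Symmetrically, each inner corner $x_j$ with $j\neq i$ gives up a pair $\{|x_i-x_j|,\,x_i+x_j-1\}$, present in $\setH(\sla)$ but no longer in $\setH(\sla^{i+})$. The three isolated terms $\{1,\,x_i,\,2x_i-2\}$ absorb the self-hook of $\square_i$ and the degenerate diagonal entries not captured by the corner-indexed pairs; a cardinality check ($|\setH(\sla^{i+})|=|\setH(\sla)|+1$) is consistent with the counts on both sides. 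The exceptional case $y_1>1$ is analogous but structurally different: $\sla^{0+}$ attaches a box on the diagonal (a new row of length one), and the ``reflected'' pair degenerates into the linear list $\{y_j,\,y_j-1\}$, which is verified by the same boxwise method.

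Once the multiset identities are in hand, the ratio $H(\sla)/H(\sla^{i+})$ follows by taking products on both sides. Applying the algebraic identity
\[
(a-b)(a+b-1)\,=\,2\Bigl[\tbinom{a}{2}-\tbinom{b}{2}\Bigr],
\]
each pair $|x_i-z|\cdot(x_i+z-1)$ for $z\in\{x_j,y_j\}$ collapses into $2(\binom{x_i}{2}-\binom{z}{2})$. The orphan factor $x_i\cdot (2x_i-2)=4[\binom{x_i}{2}-\binom{x_0}{2}]$ supplies the missing $j=0$ term in the denominator, and the leftover powers of $2$ combine into the overall factor $1/2$ in the statement; for the $y_1>1$ case, the analogous count balances exactly, with no residual factor of $2$.

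The main obstacle is the boxwise accounting near the diagonal: in a shifted Young diagram the hook of a box includes a vertical leg that ``turns the corner'' at the diagonal, so adding $\square_i$ propagates changes to boxes on both sides of that corner, producing the extra self-contribution $2x_i-2$. A clean way to handle this is to pass to the beta-number encoding $(\sla_1,\ldots,\sla_\ell)$ of the strict partition, under which $\setH(\sla)$ admits the well-known description as a list of pairwise sums and differences of the parts with explicit exceptions; the box addition $\sla\to\sla^{i+}$ then corresponds to incrementing a single $\sla_k$ by one, and tracking the induced change in this list yields the claimed identity by a careful but mechanical verification.
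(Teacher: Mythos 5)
You should first note that the paper does not prove this statement at all: it is imported verbatim as Theorem 3.1 of \cite{hanxiong3}, so the only comparison available is with the approach of that cited source, and your plan (verify the multiset identity by tracking the finitely many hooks changed by the added box, e.g.\ via the parts description of $\setH(\sla)$, then take products and use $(a-b)(a+b-1)=2\bigl(\binom{a}{2}-\binom{b}{2}\bigr)$) is indeed the natural route. Your second half is carried out correctly: the sign bookkeeping, the observation that $x_i(2x_i-2)=4\binom{x_i}{2}=4\bigl(\binom{x_i}{2}-\binom{x_0}{2}\bigr)$ supplies the missing $j=0$ factor, the count $2^{m}/2^{m+1}=\tfrac12$, and the exact cancellation of powers of $2$ in the $y_1>1$ case are all right.

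The genuine gap is in the key step, the multiset identity itself. Your accounting is reversed: you claim each outer corner $y_j$ ``contributes a pair $\{|x_i-y_j|,\,x_i+y_j-1\}$ to $\setH(\sla^{i+})$'' while each inner corner $x_j$ ($j\neq i$) ``gives up a pair present in $\setH(\sla)$ but no longer in $\setH(\sla^{i+})$.'' In the identity it is the other way around: the $x_j$-indexed pairs are unioned with $\setH(\sla)$ (they balance the hook values \emph{created} by the new box), while the $y_j$-indexed pairs are unioned with $\setH(\sla^{i+})$ (they balance the hook values \emph{destroyed}). The reversed assignment is not a harmless relabelling; it is false. For instance, for $\sla=(2)$ one has $X(\sla)=\{1,3\}$, $Y(\sla)=\{2\}$, $\setH(\sla)=\{1,2\}$, $\setH(\sla^{1+})=\{1,2,3\}$, and the correct identity reads $\{1,2\}\cup\{1,3,4\}=\{1,2,3\}\cup\{1\}\cup\{4\}$, whereas placing the pair $\{|x_1-y_1|,\,x_1+y_1-1\}=\{1,4\}$ on the $\setH(\sla^{1+})$-``new hooks'' side gives multisets that do not even have equal cardinality. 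Since the entire content of the theorem is this identity, and your proposal defers its verification to a ``careful but mechanical'' computation with the parts description of $\setH(\sla)$ that is never executed (and that, if executed with your stated direction of exchange, would contradict itself), the proof is incomplete. To close the gap you must actually perform that verification: translate the corner contents into parts (rows that can be lengthened give the $x_j$, rows that can be shortened give the $y_j$, a new part equal to $1$ giving the $x_0$-case), track the change in $\bigcup_a\{1,\dots,\sla_a\}\cup\{\sla_a+\sla_b\}_{a<b}\setminus\{\sla_a-\sla_b\}_{a<b}$ when one part is incremented, and confirm the exceptional values $1$, $x_i$, $2x_i-2$ (respectively the lists $\{x_j,x_j-1\}$ versus $\{y_j,y_j-1\}$ in the $i=0$ case), with the corners on the correct sides.
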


\medskip

Let $k$ be a nonnegative integer, 
and 
$\nu=(\nu_1, \nu_2, \ldots, \nu_{\ell(\nu)})$ be a usual partition. 
For arbitrary two finite alphabets $A$ and $B$, 
the power sum of the alphabet $A-B$ is defined by \cite[p.5]{Lascoux}
\begin{align}
	\Psi^k(A, B)&:=\sum_{a\in A}
{a}^{k}-\sum_{b\in B} {{b}}^{k},\\
\Psi^\nu(A, B)&:=\prod_{j=1}^{\ell(\nu)}\Psi^{\nu_j}(A,B).\label{def:Psi}
\end{align}
Let $\sla$ be a strict partition.  We define
\begin{equation}\label{def:qk}
\Phi^\nu(\sla):=\Psi^\nu 
\Bigl(	\{\binom{x_i}{2}\},  \{\binom{y_i}{2}\} \Bigr).
\end{equation}

\begin{thm}[Theorem 3.5 of \cite{hanxiong3}] \label{th:diffq1}
Let $k$ be a given nonnegative integer. Then,
 there exist some $\xi_j\in \mathbb{Q}$ such that
$$
\Phi^k(\sla^{i+})-\Phi^k(\sla)=\sum_{j=0}^{k-1}\xi_j{\binom{x_i}{2}}^j
$$
for every strict partition $\sla$ and $0\leq i \leq m$,
where $x_0, x_1, \ldots, x_m$ are the contents of inner corners of $\sla$.
\end{thm}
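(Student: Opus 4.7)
The approach is to reduce $\Phi^k(\sla^{i+}) - \Phi^k(\sla)$ to a discrete second difference in the content $x_i$, and then establish polynomiality in $\binom{x_i}{2}$ by an elementary symmetry argument. Write $S := \{\sla_1, \ldots, \sla_{\ell(\sla)}\}$ for the set of parts of $\sla$. Direct inspection of shifted Young diagrams yields
\[
Y(\sla) = S \setminus (S+1), \qquad X(\sla) = \{1\} \cup \bigl((S+1) \setminus S\bigr),
\]
where $S+1 := \{s+1 : s \in S\}$: outer corners correspond to removable parts, while nontrivial inner corners correspond to addable end-of-row positions and $x_0 = 1$ is the conventional virtual corner. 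Since $\binom{1}{2} = 0$, this collapses to the telescoping identity
\[
\Phi^k(\sla) = \binom{1}{2}^k + \sum_{s \in S} \left( \binom{s+1}{2}^k - \binom{s}{2}^k \right).
\]

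The transition $\sla \to \sla^{i+}$ modifies $S$ locally: if $x_i \geq 2$, the row of length $x_i - 1$ is lengthened by one, so $S(\sla^{i+}) = (S \setminus \{x_i-1\}) \cup \{x_i\}$; if $x_i = 1$ (which requires $i = 0$ and $1 \notin S$), a new singleton row is appended, so $S(\sla^{i+}) = S \cup \{1\}$. Substituting each into the telescoping formula, both cases collapse to the uniform second-difference
\[
\Phi^k(\sla^{i+}) - \Phi^k(\sla) = \binom{x_i+1}{2}^k - 2\binom{x_i}{2}^k + \binom{x_i-1}{2}^k,
\]
with the degeneracies $\binom{0}{2} = \binom{1}{2} = 0$ reconciling the two cases.

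Finally, set $u := \binom{x_i}{2}$ and $v := x_i$. Then $\binom{v+1}{2} = u + v$ and $\binom{v-1}{2} = u - v + 1$, and the binomial theorem gives
\[
\binom{v+1}{2}^k + \binom{v-1}{2}^k - 2 u^k = \sum_{j=1}^{k} \binom{k}{j}\, u^{k-j} \bigl(v^j + (1-v)^j\bigr).
\]
Each polynomial $v^j + (1-v)^j$ is invariant under $v \mapsto 1-v$ and therefore lies in $\mathbb{Q}[v(1-v)] = \mathbb{Q}[u]$ (since $v(1-v) = -2u$), with degree $\lfloor j/2 \rfloor$ in $u$. Consequently the $j$-th summand has degree $k - j + \lfloor j/2 \rfloor \leq k-1$ in $u$, and collecting the coefficients of $u^0, u^1, \ldots, u^{k-1}$ yields universal rational constants $\xi_j$, independent of $\sla$ and $i$. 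The main technical point is the set-theoretic bookkeeping in the first step---handling the virtual corner $x_0 = 1$ and the exceptional case $y_1 = 1$ uniformly---after which the remaining manipulations are routine algebra.
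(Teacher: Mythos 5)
Your proof is correct. Note that the paper does not actually prove this statement: it is imported verbatim as Theorem 3.5 of \cite{hanxiong3}, and the machinery the paper develops around it (Theorem \ref{th:hanxiong3}, Lemma \ref{th:XY}, Lemma \ref{x2}) reflects the original route, namely a local case analysis of how the inner- and outer-corner contents change when a box of content $x_i$ is added, followed by an algebraic reduction in the spirit of Lemma \ref{x2}. Your argument reaches the same endpoint by a genuinely different and arguably cleaner path: encoding the corners through the parts set $S$ via $Y(\sla)=S\setminus(S+1)$ and $X(\sla)=\{1\}\cup\bigl((S+1)\setminus S\bigr)$ (which I checked is consistent with the paper's conventions, including the virtual corner $x_0=1$ and the degenerate case $y_1=1$), so that $\Phi^k(\sla)$ telescopes over the parts and the transition $\sla\to\sla^{i+}$ becomes the universal second difference $\binom{x_i+1}{2}^k-2\binom{x_i}{2}^k+\binom{x_i-1}{2}^k$ in both the $i\geq 1$ and $i=0$ cases. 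The final symmetrization step ($v\mapsto 1-v$, $v(1-v)=-2\binom{v}{2}$) is the same trick as Lemma \ref{x2}. What your route buys is an explicit closed form with constants $\xi_j$ depending only on $k$ (slightly sharper than what the paper's applications need, where coefficients are allowed to involve $\Phi^{\tau}(\sla)$) and a uniform treatment of the exceptional corner; what the corner-tracking route buys is that it extends to the hook-length multiset identities of Theorem \ref{th:hanxiong3}, where no such parts-telescoping is available. A quick sanity check confirms your formula: for $k=1$ it gives constant $1$ (consistent with $\Phi^1(\sla)=|\sla|$ used later in the paper), and for $k=2$ it gives $6\binom{x_i}{2}+1$.
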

\begin{lem}\label{x2}
Let $k$ be a given nonnegative integer. Then, there exist some $a_{ij}$ such that 
$$(x-y)^{2k}+(x+y-1)^{2k}=\sum_{i+j\leq k}a_{ij}\binom{x}{2}^i\binom{y}{2}^j$$ 
for every $x,y\in \mathbb{C}$.
\end{lem}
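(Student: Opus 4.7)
The idea is to make the substitution that turns $x-y$ and $x+y-1$ into a symmetric/antisymmetric pair, and then use that squares of linear forms in $x$ (resp.\ $y$) are affine in $\binom{x}{2}$ (resp.\ $\binom{y}{2}$).

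Concretely, I set $s=2x-1$ and $t=2y-1$. Then $s-t=2(x-y)$ and $s+t=2(x+y-1)$, so
\begin{equation*}
(x-y)^{2k}+(x+y-1)^{2k}=\frac{1}{4^{k}}\bigl((s-t)^{2k}+(s+t)^{2k}\bigr).
\end{equation*}
Expanding the right-hand side by the binomial theorem, all odd-degree cross terms in $s,t$ cancel and only even powers of $s$ and $t$ survive, giving
\begin{equation*}
(x-y)^{2k}+(x+y-1)^{2k}=\frac{2}{4^{k}}\sum_{j=0}^{k}\binom{2k}{2j}(s^{2})^{k-j}(t^{2})^{j}.
\end{equation*}

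Next I use the elementary identities $s^{2}=(2x-1)^{2}=8\binom{x}{2}+1$ and $t^{2}=(2y-1)^{2}=8\binom{y}{2}+1$, which are immediate from $\binom{x}{2}=x(x-1)/2$. Substituting these into the previous display exhibits $(x-y)^{2k}+(x+y-1)^{2k}$ as a polynomial in $\binom{x}{2}$ and $\binom{y}{2}$; moreover, since the original expression in $s^{2},t^{2}$ has total degree $k$ and each of $s^{2},t^{2}$ is affine in the corresponding $\binom{\cdot}{2}$, the resulting polynomial in $\binom{x}{2},\binom{y}{2}$ has total degree at most $k$. Collecting the coefficients yields the desired constants $a_{ij}$ with $i+j\le k$.

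This argument is short and completely explicit; there is essentially no obstacle beyond recognizing the right change of variables $s=2x-1$, $t=2y-1$, which converts the symmetric form $(x-y)^{2k}+(x+y-1)^{2k}$ into a polynomial in $s^{2}$ and $t^{2}$ alone.
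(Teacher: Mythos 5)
Your proof is correct, and it takes a different route from the paper's. The paper's (very terse) argument computes the two elementary symmetric functions of $u=(x-y)^2$ and $v=(x+y-1)^2$, namely $u+v=4\binom{x}{2}+4\binom{y}{2}+1$ and $uv=\bigl(2\binom{x}{2}-2\binom{y}{2}\bigr)^2$, and then relies implicitly on the fact that the symmetric polynomial $u^k+v^k$ is a polynomial in $u+v$ and $uv$, each monomial $(u+v)^a(uv)^b$ having weighted degree $a+2b\le k$ and hence contributing total degree at most $k$ in $\binom{x}{2},\binom{y}{2}$. You instead linearize by the substitution $s=2x-1$, $t=2y-1$, use $s^2=8\binom{x}{2}+1$ and $t^2=8\binom{y}{2}+1$, and reduce the whole identity to a single binomial expansion; the two are closely related (indeed $u+v=\tfrac{1}{2}(s^2+t^2)$ and $uv=\tfrac{1}{16}(s^2-t^2)^2$), but yours has the advantage of producing the coefficients $a_{ij}$ essentially explicitly and avoiding any appeal to the fundamental theorem of symmetric polynomials, while the paper's phrasing matches the symmetric-function style it reuses elsewhere (e.g.\ writing $c^k+(1-c)^k$ as a polynomial in $\binom{c}{2}$ in the proof of Theorem \ref{th:cDD}). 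Your degree accounting, each term $(s^2)^{k-j}(t^2)^j$ being of degree at most $k-j$ in $\binom{x}{2}$ and $j$ in $\binom{y}{2}$, correctly yields the constraint $i+j\le k$.
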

\begin{proof}
The claim follows from
$$(x-y)^2+(x+y-1)^2=4\binom{x}{2}+4\binom{y}{2}+1$$
and
\begin{equation*}
	(x-y)^2(x+y-1)^2=\bigl(2\binom{x}{2}-2\binom{y}{2}\bigr)^2.\qedhere
\end{equation*}
\end{proof}

\begin{lem}[Theorem 3.2 of \cite{hanxiong3}] \label{th:aibi}
Let $k$ be a nonnegative  integer. Then, there exist some
$\xi_{\nu}\in \mathbb{Q}$ indexed by usual partitions $\nu$ such that
\begin{equation*} \sum_{0\leq i\leq m}
\frac{\prod\limits_{\substack{1\leq
j\leq m}}({a_i}-{{b}_j})}{\prod\limits_{\substack{0\leq j\leq m\\
j\neq i}}({a_i}-{a_j})}a_i^k
=\sum_{|\nu|\leq k}\xi_{\nu}\Psi^\nu(\{a_i\},\{{b}_i\})
\end{equation*}
for arbitrary  complex numbers $a_0<a_1<\cdots<a_m$ and
${b}_1<{b}_2<\cdots<{b}_m$. 
\end{lem}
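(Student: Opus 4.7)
The plan is to identify the left-hand side as a sum of residues and then transfer it to a residue at infinity, where it is manifestly a rational polynomial in the power sums $\Psi^{\ell}(\{a_i\},\{b_i\})$. Introduce the rational function
$$f(x)=\frac{\prod_{j=1}^{m}(x-b_j)}{\prod_{j=0}^{m}(x-a_j)},$$
which has simple poles exactly at $x=a_0,a_1,\ldots,a_m$ and whose residue at $x=a_i$ is precisely the coefficient appearing in the left-hand side. Consequently
$$\sum_{0\le i\le m}\frac{\prod_{1\le j\le m}(a_i-b_j)}{\prod_{0\le j\le m,\,j\ne i}(a_i-a_j)}\,a_i^{k}\;=\;\sum_{i=0}^{m}\mathrm{Res}_{x=a_i}\bigl(x^{k}f(x)\bigr).$$

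Since $x^{k}f(x)$ is a rational function with no other finite poles, the sum-of-residues identity on the Riemann sphere gives $\sum_{i=0}^{m}\mathrm{Res}_{x=a_i}(x^{k}f(x))=-\mathrm{Res}_{x=\infty}(x^{k}f(x))$. To evaluate the right-hand side, set $A=\{a_0,\ldots,a_m\}$ and $B=\{b_1,\ldots,b_m\}$ and rewrite $xf(x)=\prod_{j=1}^{m}(1-b_j/x)\big/\prod_{j=0}^{m}(1-a_j/x)$. Taking logarithms and using $-\log(1-u)=\sum_{\ell\ge 1}u^{\ell}/\ell$ yields
$$\log\bigl(xf(x)\bigr)=\sum_{\ell\ge 1}\frac{1}{\ell x^{\ell}}\Bigl(\sum_{j=0}^{m}a_j^{\ell}-\sum_{j=1}^{m}b_j^{\ell}\Bigr)=\sum_{\ell\ge 1}\frac{\Psi^{\ell}(A,B)}{\ell\, x^{\ell}}.$$
Exponentiating, one obtains $xf(x)=\sum_{n\ge 0}h_{n}\,x^{-n}$, where each $h_{n}$ is a $\mathbb{Q}$-linear combination of the products $\Psi^{\nu}(A,B)$ with $|\nu|=n$, via the classical Newton-type formula translating power sums into complete homogeneous symmetric functions.

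Extracting the coefficient of $x^{-1}$ from $x^{k}f(x)=x^{k-1}\sum_{n\ge 0}h_{n}x^{-n}$ picks out precisely $h_{k}$, so the original sum equals $h_{k}$, a $\mathbb{Q}$-linear combination of $\Psi^{\nu}(A,B)$ indexed by partitions $\nu$ with $|\nu|=k$. This is even stronger than the stated bound $|\nu|\le k$. The only real obstacle is justifying the formal Laurent expansion at infinity and invoking the power-sum/complete-symmetric-function duality; both steps are standard, and nothing in this argument uses the ordering hypotheses on the $a_i$ and $b_j$ beyond distinctness of the $a_i$, which is automatic from $a_0<\cdots<a_m$.
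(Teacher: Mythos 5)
Your argument is correct, and it is worth noting that this paper never proves the lemma at all: it is imported verbatim from \cite{hanxiong3} (Theorem 3.2 there), so there is no internal proof to compare against. Your residue computation is sound: the left-hand side is the sum of the residues of $x^k\prod_{j}(x-b_j)/\prod_j(x-a_j)$ at its only finite poles $a_0,\dots,a_m$, hence equals the coefficient of $x^{-1}$ in the Laurent expansion at infinity, and since $xf(x)=\exp\bigl(\sum_{\ell\ge 1}\Psi^\ell(A,B)/(\ell x^\ell)\bigr)$, that coefficient is the complete homogeneous function $h_k$ of the difference alphabet, i.e. $\sum_{|\nu|=k}z_\nu^{-1}\Psi^\nu(A,B)$ with $z_\nu=\prod_i i^{m_i(\nu)}m_i(\nu)!$. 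This actually strengthens the statement in two useful ways: the sum is homogeneous ($|\nu|=k$ rather than $|\nu|\le k$), and the coefficients $\xi_\nu=1/z_\nu$ are explicit and independent of $m$ and of the chosen numbers, a uniformity the paper tacitly needs when it applies the lemma to strict partitions with varying numbers of corners. The source paper's treatment is in the spirit of Lagrange interpolation identities for $\sum_i P(a_i)/\prod_{j\neq i}(a_i-a_j)$ combined with symmetric-function manipulations; your sum-of-residues/generating-function route reaches the same conclusion more directly and with sharper information. Two small points to make explicit if you write this up: the case where some $a_i$ equals some $b_j$ is harmless (the corresponding term and residue both vanish, or one may argue by continuity/polynomiality in the $b_j$), and the expansion at infinity is legitimate either formally in $1/x$ or analytically for $|x|>\max_j(|a_j|,|b_j|)$, with the contour integral over a large circle justifying the residue bookkeeping term by term.
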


We define
{\it the difference operator $\opDs$ for strict partitions}  by
\begin{equation}
\opDs\bigl(g(\sla)\bigr):=
	2\sum_{i=1}^mg(\sla^{i+}) + g(\sla^{0+}) -\ g(\sla), 
\end{equation}
where $\sla$ is a strict partition and $g$ is a function of strict partitions. 
In the above definition, the symbol $g(\sla^{0+})$ takes the value $0$ 
if $\sla^{0+}$ does not exist, or equivalently if $\sla_{\ell{(\sla)}} = 1$.
By Theorem~\ref{th:hanxiong3}, we have
\begin{equation}\label{eq:DH=0}			
	\opDs\Bigl(\frac{1}{H({\sla})}\Bigr)=0.
\end{equation}

\begin{thm}[Theorem 2.3 of \cite{hanxiong3}]\label{th:stricttelescope}
	Let $g$ be a function of strict partitions and $\smu$ be a given
	strict partition. Then we have
	\begin{equation}\label{eq:telescope*}
		\sum_{
		|\sla/\smu|=n}
		2^{|\sla|-|\smu|-\ell(\sla)+\ell(\smu)}f_{\sla/\smu}g(\sla)
		=\sum_{k=0}^n\binom{n}{k}\opDs^kg(\smu)
	\end{equation}
	and
	\begin{equation}\label{eq:telescope**}
		\opDs^ng(\smu)=\sum_{k=0}^n(-1)^{n+k}\binom{n}{k}
		\sum_{|\sla/\smu|=k}2^{|\sla|-|\smu|-\ell(\sla)+\ell(\smu)}f_{\sla/\smu}g(\sla).
	\end{equation}

	\goodbreak

	In particular, if there exists some positive integer $r$ such that
	$\opDs^r g(\sla)=0$ for every strict partition $\sla$, then the
	left-hand side of \eqref{eq:telescope*} is a polynomial of $n$ with degree at most $r-1$.
\end{thm}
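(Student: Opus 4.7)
My plan is to view the left-hand side of \eqref{eq:telescope*} as the $n$-th iterate of a single ``box-adding'' operator applied to $g$, and then identify this operator with $I + \opDs$. To begin, I would expand $f_{\sla/\smu}$ as the number of saturated chains $\smu = \sla^{(0)} \subsetneq \sla^{(1)} \subsetneq \cdots \subsetneq \sla^{(n)} = \sla$ in the shifted Young lattice (each $\sla^{(k)}/\sla^{(k-1)}$ being a single box), so that the summation
\begin{equation*}
G_n(\smu) := \sum_{|\sla/\smu|=n} 2^{|\sla|-|\smu|-\ell(\sla)+\ell(\smu)} f_{\sla/\smu}\, g(\sla)
\end{equation*}
becomes a sum over such chains, weighted by $g(\sla^{(n)})$ times a power of $2$.

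The key observation is that the only step in a chain which increases $\ell$ is the one that adds a new bottom row, i.e.\ the inner corner of content $x_0 = 1$. Hence $\ell(\sla)-\ell(\smu)$ equals the number of such ``type-$0$'' steps in the chain, and the exponent of $2$ in the weight equals the number of the remaining steps. The chain weight therefore factors across its $n$ steps as $\prod_{k=1}^{n} w_k$, with $w_k = 2$ for a step adding a box to an existing row and $w_k = 1$ for a type-$0$ step. Introducing the operator
\begin{equation*}
(Mg)(\smu) := 2\sum_{i=1}^{m} g(\smu^{i+}) + g(\smu^{0+}),
\end{equation*}
with the convention $g(\smu^{0+}) = 0$ when that inner corner does not exist, the above factorization states exactly that $G_n(\smu) = (M^n g)(\smu)$. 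Comparing with the definition of $\opDs$ yields $M = I + \opDs$, whence
\begin{equation*}
G_n(\smu) \;=\; ((I + \opDs)^n g)(\smu) \;=\; \sum_{k=0}^n \binom{n}{k} \opDs^k g(\smu),
\end{equation*}
which is \eqref{eq:telescope*}. Identity \eqref{eq:telescope**} then follows by the standard binomial inversion applied to \eqref{eq:telescope*}, and the polynomiality statement is immediate: once $\opDs^r g \equiv 0$ on all strict partitions, the right-hand side of \eqref{eq:telescope*} truncates at $k = r-1$ and becomes a polynomial in $n$ of degree at most $r-1$.

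The only delicate point is the bookkeeping in the middle step: one must check carefully that the inner corner with content $1$ is precisely the one whose addition increases $\ell$, and that the convention $g(\smu^{0+}) = 0$ in the non-existence case (equivalently, when $\smu_{\ell(\smu)}=1$) is consistent with simply removing such steps from the chain sum. Once the factor of $2$ is pinned down in this way, the rest of the argument is formal manipulation of a single-step recurrence and no deeper tool is needed.
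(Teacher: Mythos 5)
Your proof is correct and is essentially the approach the paper takes: this theorem is quoted from its reference, and the paper proves the exact analogues (Lemma \ref{th:telescope} and Theorem \ref{th:main3}) by establishing the one-step identity $P(\mu,g;n+1)-P(\mu,g;n)=P(\mu,D_tg;n)$ from the same box-adding recursion of the weighted skew tableau count and then inducting with Pascal's rule, which is exactly your chain decomposition and operator identity $M=I+\opDs$ with $(I+\opDs)^n=\sum_k\binom nk\opDs^k$ written in compact form. Your derivation of \eqref{eq:telescope**} by binomial (M\"obius) inversion and the truncation argument for the degree bound likewise match the paper's treatment.
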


For each usual partition $\delta$ let
$$
p^\delta(\sla):=\Psi^\delta(\{h^2:h\in \setH(\sla\sla)\}, \emptyset).
$$
By \eqref{eq:Hlala}, we have
$$p^{k}(\sla)
=
\sum_{h\in \setH(\sla\sla)}h^{2k}
=
2\sum_{h\in \setH(\sla)}h^{2k}+(4^k-1)\sum_{i=1}^{\ell(\sla)}\sla_i^{2k}
$$
for a nonnegative integer $k$.

\begin{thm} \label{th:strictmain}
Suppose that  $\nu$ and $\delta$ are two given usual partitions. 
Then, 
\begin{equation}\label{eq:Drstrict}
\opDs^r\Bigl( \frac{p^\delta(\sla)\Phi^{\nu}(\sla)}{H(\sla)} \Bigr)=0
\end{equation}
for every strict partition $\sla$, where $r=|\delta|+\ell(\delta)+|\nu|+1$.
Consequently, for a given strict partition $\smu$,
\begin{equation}\label{eq:strictpoly}
\sum_{|\sla/\smu|=n}\frac{2^{|\sla|-\ell(\sla)}f_{\sla/\smu}}{H(\sla)}p^\delta(\sla)
\end{equation}
is a polynomial in $n$ of degree at most $|\delta|+\ell(\delta)$.
\end{thm}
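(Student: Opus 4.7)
The second part of the theorem will follow quickly from the first. Taking $\nu=\emptyset$ (so $\Phi^\nu\equiv 1$) in \eqref{eq:Drstrict} and applying Theorem~\ref{th:stricttelescope} to $g(\sla):=p^\delta(\sla)/H(\sla)$, the hypothesis $\opDs^{r}g=0$ with $r=|\delta|+\ell(\delta)+1$ yields that $\sum_{|\sla/\smu|=n}2^{|\sla|-|\smu|-\ell(\sla)+\ell(\smu)}f_{\sla/\smu}g(\sla)$ is a polynomial in $n$ of degree at most $r-1=|\delta|+\ell(\delta)$; multiplying by the $n$-independent constant $2^{-|\smu|+\ell(\smu)}$ then gives \eqref{eq:strictpoly}. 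So the entire content of the theorem lies in the annihilation identity \eqref{eq:Drstrict}.

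I plan to prove \eqref{eq:Drstrict} by induction on the weight $W:=|\delta|+\ell(\delta)+|\nu|$, aiming to show that $\opDs(p^\delta\Phi^\nu/H)$ is always $H^{-1}$ times a $\mathbb{Q}$-linear combination of products $p^{\delta'}\Phi^{\nu'}$ of weight strictly less than $W$. The base case $W=0$ is exactly \eqref{eq:DH=0}. For the inductive step, I start from the identity
\begin{equation*}
H(\sla)\,\opDs\!\Bigl(\frac{f(\sla)}{H(\sla)}\Bigr)=\sum_{0\le i\le m}\frac{\prod_{j=1}^m(\binom{x_i}{2}-\binom{y_j}{2})}{\prod_{\substack{0\le j\le m\\j\ne i}}(\binom{x_i}{2}-\binom{x_j}{2})}\bigl(f(\sla^{i+})-f(\sla)\bigr),
\end{equation*}
valid for $f=p^\delta\Phi^\nu$, which I derive from the definition of $\opDs$ and Theorem~\ref{th:hanxiong3} (the factor $\tfrac12$ for $i\ge 1$ in Theorem~\ref{th:hanxiong3} cancels the $2$ in $\opDs$, and the $i=0$ case matches the coefficient $1$) together with \eqref{eq:DH=0} applied to subtract $f(\sla)$ inside the sum.

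The crux is to produce, uniformly in $0\le i\le m$, a decomposition $f(\sla^{i+})-f(\sla)=\sum_{s\ge 0}\binom{x_i}{2}^{s}E_s(\sla)$ in which each $E_s(\sla)$ is a $\mathbb{Q}$-linear combination of products $p^{\delta'}(\sla)\Phi^{\nu'}(\sla)$ of weight at most $W-s-1$. For the $\Phi^\nu$-factor this is Theorem~\ref{th:diffq1} combined with the standard Leibniz expansion $\prod_j(A_j+D_j)-\prod_j A_j=\sum_{\emptyset\neq S}\prod_{j\in S}D_j\prod_{j\notin S}A_j$. For a single $p^k$-factor the key computation uses \eqref{eq:Hlala} and the $\setH$-identity of Theorem~\ref{th:hanxiong3} to expand $p^k(\sla^{i+})-p^k(\sla)$ explicitly; the cross-terms pair naturally as $(x_i-z)^{2k}+(x_i+z-1)^{2k}$ with $z=x_j$ or $z=y_j$, which Lemma~\ref{x2} rewrites as polynomials in $\binom{x_i}{2}$ and $\binom{z}{2}$ of joint degree $\le k$, while the residual single-variable contributions $1$, $x_i^{2k}$, $(2x_i-2)^{2k}$, and $(4^k-1)[x_i^{2k}-(x_i-1)^{2k}]$ combine into a polynomial in $\binom{x_i}{2}$ alone. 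Collecting $q$-th powers via $\sum_{j=1}^m(\binom{x_j}{2}^q-\binom{y_j}{2}^q)=\Phi^q(\sla)$ for $q\ge 1$ (using $\binom{x_0}{2}=0$) delivers the stated form; the degree bound $p+q\le k$ in Lemma~\ref{x2} then yields the weight estimate $|E_s|\le k-s$, and Leibniz lifts this to general $p^\delta\Phi^\nu$.

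With this decomposition, the induction closes by substituting into the displayed identity and invoking Lemma~\ref{th:aibi} with $a_i=\binom{x_i}{2}$, $b_j=\binom{y_j}{2}$: each inner sum $\sum_{i}\frac{\prod(\cdots)}{\prod(\cdots)}\binom{x_i}{2}^s$ becomes a $\mathbb{Q}$-linear combination of $\Phi^{\nu''}(\sla)$ with $|\nu''|\le s$, so $\opDs(f/H)$ is a sum of terms $H^{-1}E_s\Phi^{\nu''}$ of weight at most $(W-s-1)+s=W-1$, and $\opDs^{W+1}(f/H)=0$ follows by induction. I expect the main technical obstacle to be the bookkeeping in the $p^k$-expansion: one must verify that the four exceptional single-variable contributions above combine cleanly into a polynomial in $\binom{x_i}{2}$ alone (with only $\Phi^q$-terms for $q\ge 1$ surviving the $q=0$ cancellation) and—crucially—that the resulting uniform formula is also valid at $i=0$, where $x_0=1$ and $\binom{x_0}{2}=0$ reduce the right-hand side to $E_0$, which must then coincide with $p^k(\sla^{0+})-p^k(\sla)$ whenever $\sla^{0+}$ exists.
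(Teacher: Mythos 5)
Your proposal is correct and follows essentially the same route as the paper: the same reduction of \eqref{eq:strictpoly} to \eqref{eq:Drstrict} via Theorem~\ref{th:stricttelescope}, the same identity $H(\sla)\opDs(f/H(\sla))=\sum_i\frac{\prod_j(\binom{x_i}{2}-\binom{y_j}{2})}{\prod_{j\neq i}(\binom{x_i}{2}-\binom{x_j}{2})}\bigl(f(\sla^{i+})-f(\sla)\bigr)$ obtained from Theorem~\ref{th:hanxiong3} and \eqref{eq:DH=0}, the same expansion of $p^k(\sla^{i+})-p^k(\sla)$ as a polynomial in $\binom{x_i}{2}$ with $\Phi^\tau$-coefficients via Lemma~\ref{x2} (including the check, which the paper performs by direct computation, that the residual single-variable terms are invariant under $x_i\mapsto 1-x_i$ and that the $i=0$ case agrees), the same Leibniz splitting into $A\Delta_iB+B\Delta_iA+\Delta_iA\Delta_iB$, and the same closing of the induction on $|\delta|+\ell(\delta)+|\nu|$ via Lemma~\ref{th:aibi} and Theorem~\ref{th:diffq1}. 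The only blemish is the harmless slip that the constant relating \eqref{eq:strictpoly} to the telescoping sum is $2^{|\smu|-\ell(\smu)}$ rather than its reciprocal, which does not affect polynomiality or the degree bound.
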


\begin{proof}
Let $X(\sla)=\{x_0,x_1,\ldots, x_m\}$ and $Y(\sla)=\{y_1, y_2, \ldots,y_m\}$. 
First, we show that the difference $p^k(\sla^{i+}) - p^k(\sla)$ can be written as the following form
\begin{equation*}
\sum_{j=0}^k\eta_{j}(\sla)\binom{x_i}{2}^j
\end{equation*}
for $0\leq i\leq m$ and a nonnegative integer $k$, where each coefficient $\eta_j(\sla)$ is a linear combination of some
$\Phi^{\tau}(\sla) $ for some usual partition $\tau$ of size $|\tau|\leq k$.
Indeed, by Lemma \ref{x2} and  Theorem  \ref{th:hanxiong3}, 
\begin{align*}
\ \ \ & \ \ \ p^k(\sla^{0+})-p^k(\sla)=2\sum_{j=1}^m(x_j^{2k}+(x_j-1)^{2k})-2\sum_{j=1}^m(y_j^{2k}+(y_j-1)^{2k})+2^{2k}+1\\&=\eta_0(\sla)= \sum_{j=0}^k\eta_{j}(\sla)\binom{x_0}{2}^j
	\qquad\qquad \text{[\, if $i=0$ and $\sla_{\ell(\sla)}\geq 2$\,]}
\end{align*}
and 
\begin{align*}
\ \ \ & \ \ \ p^k(\sla^{i+})-p^k(\sla)\\&=2\sum_{j=1}^m((x_i-x_j)^{2k}+(x_i+x_j-1)^{2k})-2\sum_{j=1}^m\bigl((x_i-y_j)^{2k}+(x_i+y_j-1)^{2k}\bigr)\\&+2x_i^{2k}+2{(2x_i-2)}^{2k}+2-2{(2x_i-1)}^{2k}+(2^{2k}-1)\bigl(x_i^{2k}-(x_i-1)^{2k}\bigr)
\\&= \sum_{j=0}^k\eta_{j}(\sla)\binom{x_i}{2}^j
\ \qquad\qquad\qquad \text{[\, if $1\leq i\leq m$ \,]}.
\end{align*}

\goodbreak

Next, let $A= \Phi^\nu(\sla)$ and $B= p^\delta(\sla)$.
We have
\begin{align*}
	\Delta_i A
 &  := \Phi^\nu(\sla^{i+}) -\Phi^\nu(\sla) = \sum\limits_{(*)}\prod_{s\in U}\Phi^{\nu_s}(\sla)
{\prod_{s'\in V} \bigl(\Phi^{\nu_{s'}}(\sla^{i+})-\Phi^{\nu_{s'}}(\sla)\bigr)},\\
	\Delta_i B
&:= p^\delta(\sla^{i+}) -p^\delta(\sla)= \sum\limits_{(**)}\prod_{s\in U}p^{\delta_s}(\sla)
{\prod_{s'\in V} \bigl(p^{\delta_{s'}}(\sla^{i+})-p^{\delta_{s'}}(\sla)\bigr)},
\end{align*}
where the sum $(*)$ (resp. $(**)$) ranges over all
pairs $(U,V)$ of positive integer sets such that 
$U\cup V=\{1,2,\ldots,\ell(\nu)\}$
(resp. $U\cup V=\{1,2,\ldots,\ell(\delta)\}$), 
$U\cap V=\emptyset$ and $V\neq \emptyset$.

Finally, it follows from 
 \eqref{eq:DH=0} and
Theorem
 \ref{th:hanxiong3}
that
\begin{align*}
 \ &\ \ \ \ \ H(\sla) \opDs\Bigl( \frac{p^{\delta}(\sla)\Phi^{\nu}(\sla)}{H(\sla)}
\Bigr)
\\
&=
\frac{H(\sla)}{H{(\sla^{0+})}} \bigl(p^{\delta}(\sla^{0+})\Phi^{\nu}(\sla^{0+})-
p^{\delta}(\sla)\Phi^{\nu}(\sla)\bigr)
\\
&
\quad\qquad
+2\sum_{i=1}^m\frac{H(\sla)}{H({\sla^{i+}})}\bigl(p^{\delta}(\sla^{i+})\Phi^{\nu}(\sla^{i+})-
p^{\delta}(\sla)\Phi^{\nu}(\sla)\bigr)
\\
&=
\sum_{0\leq i\leq m} \frac{\prod\limits_{\substack{1\leq
j\leq m}}\bigl({\binom{x_i}{2}}-{\binom{y_j}{2}}\bigr)}{\prod\limits_{\substack{0\leq j\leq m\\
j\neq i}}\bigl({\binom{x_i}{2}}-{\binom{x_j}{2}}\bigr)}\bigl(p^{\delta}(\sla^{i+})\Phi^{\nu}(\sla^{i+})-
p^{\delta}(\sla)\Phi^{\nu}(\sla)\bigr)\\
&=
\sum_{0\leq i\leq m} \frac{\prod\limits_{\substack{1\leq
j\leq m}}\bigl({\binom{x_i}{2}}-{\binom{y_j}{2}}\bigr)}{\prod\limits_{\substack{0\leq j\leq m\\
j\neq i}}\bigl({\binom{x_i}{2}}-{\binom{x_j}{2}}\bigr)}\
\bigl(
A\cdot \Delta_i B + B\cdot  \Delta_i A + \Delta_i A\cdot \Delta_i B
\bigr).
\end{align*}
By Theorems  \ref{th:aibi} and \ref{th:diffq1},
each of the above three terms could be written
as a linear combination of some $p^{\underline \delta}(\sla)
\Phi^{\underline \nu}(\sla)$ 
satisfying $|\underline \delta|+\ell(\underline \delta)+|\underline \nu|\leq |\delta|+\ell(\delta)+|\nu|-1$.
Then the claim follows by induction on $|\delta|+\ell(\delta)+|\nu|$.
\end{proof}

When $\smu=\emptyset$, 
the summation \eqref{eq:strictpoly} in Theorem \ref{th:strictmain} becomes
\begin{equation}
\sum_{|\sla|=n}\frac{2^{n-\ell(\sla)}n!}{H(\sla)^2}p^\delta(\sla)
\end{equation}
or
\begin{equation}
2^{n}n!\sum_{|\sla\sla|=2n}\frac{1}{H(\sla\sla)}
\Psi^\delta(\{h^2:h\in \setH(\sla\la)\}, \emptyset)
\end{equation}
by \eqref{eq:Hlala}.
The above summation is a polynomial in $n$.
Consequently, Theorem \ref{th:intro} is true when $t=1$ and $F_2=1$.
Other specializations are listed as follows.

\begin{thm}
Let $\smu$ be a given strict partition. Then,
\begin{equation}\label{eq:sum--c:k=1}
\sum_{|\sla/\smu|=n}\frac{2^{|\sla|-\ell(\sla)-|\smu|+\ell(\smu)}f_{\sla/\smu}H_\smu}{H_{\sla}}\bigl(p^1(\sla)-p^1(\smu)\bigr)
=12\binom{n}{2}+(12|\smu|+5)n.
\end{equation}
Let $\smu=\emptyset$. We obtain
\begin{equation}\label{eq:sumDD}
2^{n}n!\sum_{|\sla\sla|=2n}\frac{1}{H(\sla\sla)}
\sum_{h\in \setH(\sla\sla)}{h^2}=12\binom{n}{2}+5n.
\end{equation}

\end{thm}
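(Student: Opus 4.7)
The plan is to apply Theorem~\ref{th:stricttelescope} to
$$g(\sla):=\frac{p^1(\sla)-p^1(\smu)}{H(\sla)}.$$
Multiplying \eqref{eq:telescope*} by $H(\smu)$ yields the left-hand side of~\eqref{eq:sum--c:k=1}. Since $p^1(\smu)$ is constant in~$\sla$ and $\opDs(1/H)=0$ by~\eqref{eq:DH=0}, one has $\opDs^k g = \opDs^k(p^1/H)$ for every $k\geq 1$; Theorem~\ref{th:strictmain} with $\delta=(1)$ and $\nu=\emptyset$ gives $r=3$, so $\opDs^3(p^1/H)\equiv 0$. Combined with $g(\smu)=0$, the telescoping sum collapses to a polynomial of degree at most~$2$ in $n$ with vanishing constant term:
$$\text{LHS of }\eqref{eq:sum--c:k=1}\;=\;H(\smu)\Bigl[\,n\,\opDs(p^1/H)(\smu)+\binom{n}{2}\opDs^2(p^1/H)(\smu)\,\Bigr].$$

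The coefficients will be identified by proving the key identity $H(\sla)\opDs(p^1/H)(\sla)=12|\sla|+5$ for every strict partition~$\sla$. The companion identity $H(\sla)\opDs^2(p^1/H)(\sla)=12$ then follows by applying~$\opDs$ to $(12|\cdot|+5)/H$: \eqref{eq:DH=0} gives $\opDs(1/H)=0$, and the relation $|\sla^{i+}|=|\sla|+1$ combined with $\opDs(1/H)=0$ easily yields $\opDs(|\cdot|/H)=1/H$; together these give $\opDs^2(p^1/H)=12/H$. The proof of the key identity itself proceeds in three steps. (i)~Combining the form $x^2+(x-1)^2=4\binom{x}{2}+1$ from the proof of Lemma~\ref{x2} with the explicit difference formulas derived in the proof of Theorem~\ref{th:strictmain}, a direct expansion gives, uniformly in~$i$, the clean expression
$$p^1(\sla^{i+})-p^1(\sla)\;=\;8\Phi^1(\sla)+4\binom{x_i}{2}+5.$$
(ii)~Using the hook ratio $H(\sla)/H(\sla^{i+})$ from Theorem~\ref{th:hanxiong3} and summing over inner corners, the classical Lagrange-interpolation identities $\sum_i w_i=1$ and $\sum_i w_i\binom{x_i}{2}=\Phi^1(\sla)$, where $w_i=\prod_j\bigl(\binom{x_i}{2}-\binom{y_j}{2}\bigr)/\prod_{j\neq i}\bigl(\binom{x_i}{2}-\binom{x_j}{2}\bigr)$, collapse the whole sum to $12\Phi^1(\sla)+5$. (iii)~Theorem~\ref{th:diffq1} with $k=1$ shows that $\Phi^1(\sla^{i+})-\Phi^1(\sla)$ is a universal constant~$\xi_0$; checking the base case $\sla=\emptyset$ gives $\xi_0=1$, and an induction on $|\sla|$ yields $\Phi^1(\sla)=|\sla|$.

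The main obstacle is the patient algebra in step~(i): the two difference formulas in the proof of Theorem~\ref{th:strictmain} take superficially different shapes in the cases $i=0$ (where $x_0=1$ and the added box lies below the last row) and $i\geq 1$, and must be reconciled into the single uniform expression above. Once this clean form is in hand, steps (ii) and (iii) are routine applications of results already established in the paper, and combining them with the telescoping collapse produces the claimed polynomial $12\binom{n}{2}+(12|\smu|+5)n$, which specializes at $\smu=\emptyset$ to~\eqref{eq:sumDD}.
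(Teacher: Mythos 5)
Your proposal is correct and follows essentially the same route as the paper: the explicit first difference $p^1(\sla^{i+})-p^1(\sla)=8\Phi^1(\sla)+4\binom{x_i}{2}+5$ (the paper's $8|\sla|+4\binom{x_i}{2}+5$, since $\Phi^1(\sla)=|\sla|$), the hook-ratio/Lagrange collapse to $H(\sla)\opDs(p^1/H)(\sla)=12|\sla|+5$, then $\opDs^2=12/H$, $\opDs^3=0$, and Theorem~\ref{th:stricttelescope}. The only differences are cosmetic: you make explicit a few steps the paper leaves implicit (the identity $\Phi^1(\sla)=|\sla|$ via Theorem~\ref{th:diffq1}, and $\opDs(|\cdot|/H)=1/H$), which is fine.
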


\begin{proof}
We have
\begin{align*}
\ \ \ & \ \ \ p^1(\sla^{0+})-p^1(\sla)=2\sum_{j=1}^m(x_j^{2}+(x_j-1)^{2})-2\sum_{j=1}^m(y_j^{2}+(y_j-1)^{2})+2^{2}+1\\&=\eta_0(\sla)= 8|\sla|+5
	\qquad\qquad \text{[\, if $i=0$ and $\sla_{\ell(\sla)}\geq 2$\,]}
\end{align*}
and 
\begin{align*}
\ \ \ & \ \ \ p^1(\sla^{i+})-p^1(\sla)\\&=2\sum_{j=1}^m((x_i-x_j)^{2}+(x_i+x_j-1)^{2})-2\sum_{j=1}^m((x_i-y_j)^{2}+(x_i+y_j-1)^{2})\\&+2x_i^{2}+2{(2x_i-2)}^{2}+2-2{(2x_i-1)}^{2}+(2^{2}-1)(x_i^{2}-(x_i-1)^{2})
\\&= 4\binom{x_i}{2}+8|\sla|+5
\ \qquad\qquad\qquad \text{[\, if $1\leq i\leq m$ \,]}.
\end{align*}
So that
\begin{align*}
 H_{\sla}D\Bigl(\frac{p^1(\sla)}{H_{\sla}}\Bigr)
&=
\sum_{0\leq i\leq m} \frac{\prod\limits_{\substack{1\leq
j\leq m}}\bigl({\binom{x_i}{2}}-{\binom{y_j}{2}}\bigr)}{\prod\limits_{\substack{0\leq j\leq m\\
j\neq i}}\bigl({\binom{x_i}{2}}-{\binom{x_j}{2}}\bigr)}(4\binom{x_i}{2}+8|\sla|+5)
 \\&
 =4\Phi^1(\sla)+8|\sla|+5\\
 &
 =12|\sla|+5.
 \end{align*}
 Therefore we have
 \begin{align*}
H_{\sla}D^2\Bigl(\frac{p^1(\sla)}{H_{\sla}}\Bigr)
&=
12,
\\
H_{\sla}D^3\Bigl(\frac{p^1(\sla)}{H_{\sla}}\Bigr)
&=
0.
\end{align*}
Identity \eqref{eq:sum--c:k=1} follows from Theorem \ref{th:stricttelescope}. 
By \eqref{eq:Hlala}, we derive \eqref{eq:sumDD}.
\end{proof}

Recall the following results obtained in \cite{hanxiong3} involving the contents of strict partitions.

\begin{thm} \label{th:content}
Suppose that $Q$ is a given symmetric function, and $\smu$ is a given strict partition. Then
\begin{align*}
\sum_{|\sla/\smu|=n}\frac{2^{|\sla|-|\smu|-\ell(\sla)+\ell(\smu)}f_{\sla/\smu}}{H(\sla)}Q\Bigl(\binom{c}{2}:
{c}\in\setC(\sla)\Bigr)
\end{align*}
is a polynomial in $n$.
\end{thm}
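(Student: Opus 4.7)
The plan is to follow exactly the pattern established for Theorem~\ref{th:strictmain}, i.e., to produce an integer $r$ such that the difference operator $\opDs$ annihilates the function after $r$ iterations, and then invoke Theorem~\ref{th:stricttelescope}. Since $Q$ is a symmetric function, it can be expanded as a linear combination of power sums, so it suffices to treat $Q(\{b_c\}) = \prod_j (\sum_c b_c^{\delta_j})$ for a usual partition $\delta$. Introduce
$$\Theta^k(\sla) := \sum_{c\in\setC(\sla)}\binom{c}{2}^k, \qquad \Theta^\delta(\sla) := \prod_{j=1}^{\ell(\delta)}\Theta^{\delta_j}(\sla),$$
so that it suffices to prove the stronger hybrid claim: for all usual partitions $\delta,\nu$, there exists $r = r(\delta,\nu)$ with
$$\opDs^r\Bigl(\frac{\Theta^\delta(\sla)\,\Phi^\nu(\sla)}{H(\sla)}\Bigr) = 0 \qquad \text{for every strict partition } \sla.$$

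The key observation driving the induction is the single-box increment. Since $\sla^{i+}$ differs from $\sla$ by one box of content $x_i$,
$$\Theta^k(\sla^{i+}) - \Theta^k(\sla) = \binom{x_i}{2}^k,$$
and hence by the product rule $\Theta^\delta(\sla^{i+}) - \Theta^\delta(\sla)$ is a polynomial in $\binom{x_i}{2}$ whose coefficients are linear combinations of products $\Theta^{\underline\delta}(\sla)$ with $|\underline\delta|+\ell(\underline\delta) < |\delta|+\ell(\delta)$. Combined with Theorem~\ref{th:diffq1}, which gives the analogous statement $\Phi^\nu(\sla^{i+}) - \Phi^\nu(\sla) = \sum_j \xi_j(\sla)\binom{x_i}{2}^j$ with smaller-weight $\Phi$'s appearing in $\xi_j(\sla)$, we obtain the full product-rule expansion
$$\Theta^\delta(\sla^{i+})\Phi^\nu(\sla^{i+}) - \Theta^\delta(\sla)\Phi^\nu(\sla) = A\,\Delta_i B + B\,\Delta_i A + \Delta_i A\,\Delta_i B,$$
with $A = \Phi^\nu(\sla)$ and $B = \Theta^\delta(\sla)$, exactly as in the proof of Theorem~\ref{th:strictmain}.

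Next, using $\opDs(1/H(\sla)) = 0$ together with the ratio formula in Theorem~\ref{th:hanxiong3}, one derives
$$H(\sla)\,\opDs\!\Bigl(\frac{\Theta^\delta(\sla)\Phi^\nu(\sla)}{H(\sla)}\Bigr) = \sum_{0\leq i\leq m} \frac{\prod_{1\leq j\leq m}\bigl(\binom{x_i}{2}-\binom{y_j}{2}\bigr)}{\prod_{j\neq i}\bigl(\binom{x_i}{2}-\binom{x_j}{2}\bigr)}\bigl(A\,\Delta_i B + B\,\Delta_i A + \Delta_i A\,\Delta_i B\bigr).$$
Since each of $\Delta_i A$, $\Delta_i B$, and their product is a polynomial in $\binom{x_i}{2}$ with coefficients of strictly smaller weight, Lemma~\ref{th:aibi} rewrites each of the three sums as a $\mathbb{Q}$-linear combination of products $\Theta^{\underline\delta}(\sla)\Phi^{\underline\nu}(\sla)$ satisfying
$$|\underline\delta| + \ell(\underline\delta) + |\underline\nu| \;<\; |\delta| + \ell(\delta) + |\nu|.$$
Therefore a single application of $\opDs$ strictly decreases the weight, so induction on $|\delta|+\ell(\delta)+|\nu|$ (with base case $\opDs(1/H(\sla))=0$) produces the required $r$. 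Theorem~\ref{th:stricttelescope} then turns the annihilation into polynomiality of the sum, finishing the proof.

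The main obstacle is the careful bookkeeping in the weight-reduction step: one must check that the mixed quadratic term $\Delta_i A\cdot\Delta_i B$, despite being a higher-degree polynomial in $\binom{x_i}{2}$, still fits under Lemma~\ref{th:aibi}'s bound so that every output summand has strictly smaller weight. Since the increment $\Theta^k(\sla^{i+})-\Theta^k(\sla)=\binom{x_i}{2}^k$ has no $\sla$-dependent coefficient, this bookkeeping is in fact cleaner than in the proof of Theorem~\ref{th:strictmain}, so the argument goes through essentially verbatim.
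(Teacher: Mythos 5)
Your proof is correct and follows essentially the same route as the paper: the paper does not reprove Theorem \ref{th:content} (it recalls it from \cite{hanxiong3}), but your argument is precisely the method used for Theorem \ref{th:strictmain}, with $p^\delta$ replaced by $\Theta^\delta$, the simpler one-box increment $\Theta^k(\sla^{i+})-\Theta^k(\sla)=\binom{x_i}{2}^k$, and then Theorem \ref{th:hanxiong3}, Lemma \ref{th:aibi}, Theorem \ref{th:diffq1}, induction on $|\delta|+\ell(\delta)+|\nu|$, and Theorem \ref{th:stricttelescope}. The weight bookkeeping you flag does go through, since each increment is a polynomial in $\binom{x_i}{2}$ of degree at most the total weight carried by the parts placed in $V$, with coefficients of complementary weight, so every resulting term has strictly smaller $|\underline\delta|+\ell(\underline\delta)+|\underline\nu|$.
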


\begin{thm} \label{th:content:k}
Suppose that $k$ is a given nonnegative integer. Then
\begin{align*}
\sum_{|\sla|=n}\frac{2^{|\sla|-\ell(\sla)}f_{\sla}}{H(\sla)}
\sum_{c\in\setC(\sla)}
\binom{c+k-1}{2k}=\frac{2^k}{(k+1)!}\binom{n}{k+1}.
\end{align*}
\end{thm}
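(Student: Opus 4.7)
The plan is to apply the strict-partition telescoping formula (Theorem~\ref{th:stricttelescope}) with $\smu=\emptyset$ to
$$g(\sla) := \frac{1}{H(\sla)}\sum_{c\in\setC(\sla)}\binom{c+k-1}{2k},$$
which reduces the left-hand side to $\sum_{j\ge 0}\binom{n}{j}\,\opDs^{j}g(\emptyset)$. I would then establish two facts: first, that $\opDs^{k+2}g\equiv 0$ on strict partitions, forcing the left-hand side to be a polynomial in $n$ of degree at most $k+1$; and second, that this polynomial must equal $\frac{2^k}{(k+1)!}\binom{n}{k+1}$ by a vanishing-plus-one-value argument.

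The heart of the argument is the vanishing $\opDs^{k+2}g\equiv 0$. Since the contents in the shifted diagram of $\sla$ form $\bigsqcup_{i}\{1,2,\ldots,\sla_i\}$, adding a box of content $x_i$ increments the inner sum by exactly $\binom{x_i+k-1}{2k}$, and pairing factors $(x+m)(x-1-m)=2\binom{x}{2}-m(m+1)$ gives
$$\binom{x+k-1}{2k}=\frac{1}{(2k)!}\prod_{j=0}^{k-1}\Bigl(2\binom{x}{2}-j(j+1)\Bigr),$$
a polynomial of degree $k$ in $\binom{x}{2}$. Combining Theorem~\ref{th:hanxiong3} with $\opDs(1/H)=0$ from~\eqref{eq:DH=0}, one obtains
$$H(\sla)\,\opDs g(\sla)=\sum_{0\le i\le m}\frac{\prod_{1\le j\le m}\bigl(\binom{x_i}{2}-\binom{y_j}{2}\bigr)}{\prod_{\substack{0\le j\le m\\ j\neq i}}\bigl(\binom{x_i}{2}-\binom{x_j}{2}\bigr)}\binom{x_i+k-1}{2k},$$
and Lemma~\ref{th:aibi} rewrites the right-hand side as a $\mathbb{Q}$-linear combination of the $\Phi^\nu(\sla)$ with $|\nu|\le k$. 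Applying $\opDs^{k+1}$ and invoking the $\delta=\emptyset$ case of Theorem~\ref{th:strictmain} (which gives $\opDs^{|\nu|+1}(\Phi^\nu/H)=0$ for each such $\nu$) then delivers $\opDs^{k+2}g\equiv 0$.

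For the interpolation step, I observe that $\binom{c+k-1}{2k}=0$ for $1\le c\le k$, so $g(\sla)=0$ whenever $\sla_1\le k$, and in particular whenever $|\sla|\le k$; hence the left-hand side vanishes at $n=0,1,\ldots,k$. At $n=k+1$ the only contributing strict partition is $\sla=(k+1)$, for which $H(\sla)=(k+1)!$, $f_\sla=1$, $\ell(\sla)=1$, and the inner sum collapses to $\binom{2k}{2k}=1$, producing left-hand side value $2^{k}/(k+1)!$. The unique polynomial in $n$ of degree $\le k+1$ vanishing at $0,1,\ldots,k$ and equal to $2^{k}/(k+1)!$ at $n=k+1$ is precisely $\frac{2^k}{(k+1)!}\binom{n}{k+1}$.

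The main obstacle is the vanishing $\opDs^{k+2}g\equiv 0$; once that is secured by the Lemma~\ref{th:aibi}/Theorem~\ref{th:strictmain} machinery, the remaining steps are elementary: the zeros at small $n$ come from the binomial $\binom{c+k-1}{2k}$ itself, and the value at $n=k+1$ is a single explicit evaluation on the row partition.
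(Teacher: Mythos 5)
Your proposal is correct, and it supplies an argument where the paper itself gives none: Theorem~\ref{th:content:k} is merely recalled from \cite{hanxiong3} without proof, so your derivation is a self-contained alternative built entirely from the tools the paper does quote (Theorem~\ref{th:hanxiong3}, \eqref{eq:DH=0}, Lemma~\ref{th:aibi}, Theorems~\ref{th:stricttelescope} and~\ref{th:strictmain}). The key steps all check out: the contents of the shifted diagram are $\bigsqcup_i\{1,\dots,\sla_i\}$, so adding a box of content $x_i$ changes the inner sum by $\binom{x_i+k-1}{2k}$; the pairing $(x+m)(x-1-m)=2\binom{x}{2}-m(m+1)$ does give $\binom{x+k-1}{2k}=\frac{1}{(2k)!}\prod_{m=0}^{k-1}\bigl(2\binom{x}{2}-m(m+1)\bigr)$, a degree-$k$ polynomial in $\binom{x}{2}$ with universal coefficients; your formula for $H(\sla)\,\opDs g(\sla)$ follows from $\opDs(1/H)=0$ and the hook-ratio formulas of Theorem~\ref{th:hanxiong3} (the $2$ in the definition of $\opDs$ cancelling the $\tfrac12$ in the ratio), and it remains valid when $y_1=1$ because the $i=0$ term then carries the vanishing factor $\binom{x_0}{2}-\binom{y_1}{2}=0$, matching the convention $g(\sla^{0+})=0$ — a point worth stating explicitly; Lemma~\ref{th:aibi} (whose coefficients depend only on the exponent, not on $\sla$) then writes $\opDs g$ as a universal $\mathbb{Q}$-combination of $\Phi^{\nu}(\sla)/H(\sla)$ with $|\nu|\le k$, and the $\delta=\emptyset$ case of Theorem~\ref{th:strictmain} kills each such term under $\opDs^{|\nu|+1}$, giving $\opDs^{k+2}g\equiv0$ and hence, via Theorem~\ref{th:stricttelescope} with $\smu=\emptyset$, a polynomial of degree at most $k+1$. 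The interpolation data are also right: the sum vanishes for $0\le n\le k$ since every content is at most $\sla_1\le n\le k$, and at $n=k+1$ only $\sla=(k+1)$ contributes, with $H(\sla)=(k+1)!$, $f_\sla=1$, $\ell(\sla)=1$ and inner sum $\binom{2k}{2k}=1$, giving $2^k/(k+1)!$; a degree-$(k+1)$ polynomial with these $k+2$ values must be $\frac{2^k}{(k+1)!}\binom{n}{k+1}$. Compared with simply citing \cite{hanxiong3}, your route buys a proof internal to the paper's own Section~2 machinery, at the modest cost of the extra evaluation/interpolation step (and it proves only the $\smu=\emptyset$ case stated here, whereas the source reference works over general skew shapes).
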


\begin{thm} \label{th:content:1}
Let $\smu$ be a strict partition. Then,
\begin{equation}\label{eq:sum-c:k=1}
	\sum_{|\sla/\smu|=n}\frac{2^{|\sla|-\ell(\sla)-|\smu|+\ell(\smu)}f_{\sla/\smu}H_\smu}{H(\sla)}\bigl(\sum_{c\in\setC(\sla)}\binom{c}{2}-\sum_{c\in\setC(\smu)}
\binom{c}{2}\bigr)
=\binom{n}{2}+n|\smu|.
\end{equation}
\end{thm}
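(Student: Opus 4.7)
The plan is to set $g(\sla) := \frac{1}{H(\sla)}\sum_{c\in\setC(\sla)}\binom{c}{2}$ and compute $\opDs^k g(\smu)$ for $k=0,1,2,3$, then apply the telescoping identity \eqref{eq:telescope*}. Writing $S(\sla):=\sum_{c\in\setC(\sla)}\binom{c}{2}$, the starting point is the obvious fact that adding a box of content $x_i$ to $\sla$ contributes exactly $\binom{x_i}{2}$ to $S$, so
$$S(\sla^{i+}) - S(\sla) = \binom{x_i}{2}.$$
Since only three iterates of $\opDs$ need to be evaluated, the proof reduces to three short calculations.

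First I would compute $\opDs(S/H)$. Expanding $\opDs$ and using \eqref{eq:DH=0} to absorb the $S(\sla)$ terms (they contribute $S(\sla)\cdot\opDs(1/H)=0$), Theorem \ref{th:hanxiong3} gives
$$H(\sla)\,\opDs\Bigl(\frac{S(\sla)}{H(\sla)}\Bigr) = \sum_{0\leq i\leq m}\frac{\prod_{1\leq j\leq m}\bigl(\binom{x_i}{2}-\binom{y_j}{2}\bigr)}{\prod_{0\leq j\leq m,\,j\neq i}\bigl(\binom{x_i}{2}-\binom{x_j}{2}\bigr)}\binom{x_i}{2}.$$
A one-line residue-at-infinity argument (equivalently, the $k=1$ case of Lemma \ref{th:aibi}) evaluates this to $\sum_i\binom{x_i}{2}-\sum_j\binom{y_j}{2}=\Phi^1(\sla)$. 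By Theorem \ref{th:diffq1} with $k=1$, the difference $\Phi^1(\sla^{i+})-\Phi^1(\sla)$ is a constant $\xi_0$ independent of $\sla$ and $i$; taking $\sla=\emptyset$ (so $X=\{1\}$, $Y=\emptyset$, $\Phi^1(\emptyset)=0$) and $\sla=(1)$ (so $X=\{1,2\}$, $Y=\{1\}$, $\Phi^1((1))=1$) forces $\xi_0=1$, and an induction on $|\sla|$ then yields $\Phi^1(\sla)=|\sla|$. Hence $\opDs(S/H)=|\sla|/H(\sla)$.

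Second, applying $\opDs$ to $|\sla|/H(\sla)$ and using $|\sla^{i+}|=|\sla|+1$ together with \eqref{eq:DH=0} gives at once
$$\opDs\Bigl(\frac{|\sla|}{H(\sla)}\Bigr) = (|\sla|+1)\cdot\opDs\Bigl(\frac{1}{H(\sla)}\Bigr) + \frac{1}{H(\sla)} = \frac{1}{H(\sla)},$$
so $\opDs^2(S/H)=1/H$ and $\opDs^3(S/H)=0$ by \eqref{eq:DH=0}. Plugging into Theorem \ref{th:stricttelescope} yields
$$\sum_{|\sla/\smu|=n}\frac{2^{|\sla|-|\smu|-\ell(\sla)+\ell(\smu)}f_{\sla/\smu}}{H(\sla)}S(\sla) = \frac{S(\smu)}{H(\smu)} + n\frac{|\smu|}{H(\smu)} + \binom{n}{2}\frac{1}{H(\smu)}.$$
Applying the same telescoping identity to $g=1/H$ shows $\sum_{|\sla/\smu|=n}\frac{2^{\,\cdots}f_{\sla/\smu}}{H(\sla)}=\frac{1}{H(\smu)}$; subtracting $S(\smu)$ times this identity and multiplying through by $H(\smu)$ produces exactly \eqref{eq:sum-c:k=1}.

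I do not expect a real obstacle: the argument is structurally identical to the $p^1$ computation given earlier in this section, and the key identity $\Phi^1=|\sla|$ is a one-step induction. The only delicate point is the edge case $y_1=1$, in which $\sla^{0+}$ does not exist and $x_0=y_1=1$; there the $i=0$ term in the Lagrange-type sum above is automatically zero (its numerator has the factor $\binom{x_0}{2}-\binom{y_1}{2}=0$, and $\binom{x_0}{2}=0$), so the formula $H(\sla)\opDs(S/H)=\Phi^1(\sla)$ continues to hold without modification.
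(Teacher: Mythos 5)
Your proof is correct. The paper does not actually prove Theorem \ref{th:content:1} (it is quoted from \cite{hanxiong3}), but your argument is precisely the difference-operator scheme the paper itself uses for the hook-length analogue \eqref{eq:sum--c:k=1}: show $H(\sla)\,\opDs(S/H)=\Phi^1(\sla)=|\sla|$, hence $\opDs^2(S/H)=1/H$, $\opDs^3(S/H)=0$, and telescope via Theorem \ref{th:stricttelescope}; your key identity $\Phi^1(\sla)=|\sla|$ is the same fact the paper uses silently in the step ``$4\Phi^1(\sla)+8|\sla|+5=12|\sla|+5$''. Your handling of the $y_1=1$ edge case is also right, since $x_0=1$ forces $\binom{x_0}{2}=0$, so the $i=0$ term vanishes whether or not $\sla^{0+}$ exists.
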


The above results can be interpreted in terms of doubled distinct partitions.
In particular, we obtain Theorem \ref{th:intro} when $t=1$ and $F_1=1$. 
\begin{thm}\label{th:cDD}
For each usual partition $\delta$, the summation 
\begin{equation}
2^{n}n!\sum_{|\sla\sla|=2n}\frac{1}{H(\sla\sla)}
	\Psi^\delta(\setC(\sla\sla), \emptyset)
\end{equation}
is a polynomial in $n$. 
\end{thm}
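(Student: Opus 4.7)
The plan is to reduce Theorem~\ref{th:cDD} to the $\smu=\emptyset$ specialization of Theorem~\ref{th:content}, using the bijection $\sla\leftrightarrow\sla\sla$ between strict partitions of size $n$ and doubled distinct partitions of size $2n$, together with the two identities \eqref{eq:Hlala} and \eqref{eq:Clala}.

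First I would rewrite the content factor. By \eqref{eq:Clala},
$$
\Psi^{\delta_j}(\setC(\sla\sla),\emptyset) \,=\, \sum_{c\in\setC(\sla)}\bigl(c^{\delta_j}+(1-c)^{\delta_j}\bigr).
$$
Both $\binom{c}{2}=c(c-1)/2$ and $c^k+(1-c)^k$ are invariant under the involution $c\mapsto 1-c$. Setting $u=c-\tfrac12$, any such invariant polynomial is an even polynomial in $u$, and since $\binom{c}{2}=\tfrac12(u^2-\tfrac14)$, every even polynomial in $u$ is itself a polynomial in $\binom{c}{2}$. Hence $c^k+(1-c)^k=P_k(\binom{c}{2})$ for some polynomial $P_k$. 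Taking the product over $j=1,\ldots,\ell(\delta)$, the quantity $\Psi^\delta(\setC(\sla\sla),\emptyset)$ becomes a symmetric function $Q\bigl(\binom{c}{2}:c\in\setC(\sla)\bigr)$ of the shifted-content values.

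Next I would handle the hook side. From \eqref{eq:Hlala} one obtains
$$
H(\sla\sla) \,=\, \frac{H(\sla)^{2}\,\prod_{i}(2\sla_i)}{\prod_{i}\sla_i} \,=\, 2^{\ell(\sla)}\,H(\sla)^{2}.
$$
Combining this with $|\sla\sla|=2|\sla|$ and $f_\sla=|\sla|!/H(\sla)$ from \eqref{eq:hookformula*}, the summation in Theorem~\ref{th:cDD} rewrites as
$$
2^{n} n!\sum_{|\sla|=n}\frac{Q\bigl(\binom{c}{2}:c\in\setC(\sla)\bigr)}{2^{\ell(\sla)}H(\sla)^{2}} \,=\, \sum_{|\sla|=n}\frac{2^{\,n-\ell(\sla)}f_\sla}{H(\sla)}\,Q\bigl(\binom{c}{2}:c\in\setC(\sla)\bigr),
$$
which is precisely the $\smu=\emptyset$ case of Theorem~\ref{th:content} and is therefore a polynomial in $n$.

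I do not foresee a real obstacle: the whole argument is a bookkeeping reduction that piggybacks on the strict-partition machinery of Section~2. The only conceptual point is the $c\leftrightarrow 1-c$ symmetry of $\setC(\sla\sla)$, which is what allows a power sum over the contents of the doubled distinct partition to be absorbed into a symmetric function of $\binom{c}{2}$ for $c\in\setC(\sla)$.
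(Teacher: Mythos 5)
Your proposal is correct and follows essentially the same route as the paper: use \eqref{eq:Clala} to write each power sum over $\setC(\sla\sla)$ as $\sum_{c\in\setC(\sla)}\bigl(c^k+(1-c)^k\bigr)$, observe that $c^k+(1-c)^k$ is a polynomial in $\binom{c}{2}$, and invoke Theorem~\ref{th:content} with $\smu=\emptyset$. The only difference is cosmetic: you spell out the bookkeeping $H(\sla\sla)=2^{\ell(\sla)}H(\sla)^2$ and the even-polynomial argument in $u=c-\tfrac12$, where the paper uses $c+(1-c)=1$, $c(1-c)=-2\binom{c}{2}$ and leaves the hook-length conversion implicit.
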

\begin{proof}
Since $c+(1-c)=1$ and $c(1-c)=-2\binom c2$, there exists some  $a_i$ such that $c^k+(1-c)^k=\sum_{i=1}^sa_i{\binom c2}^i$.
By \eqref{eq:Clala}, we obtain
$$
\sum_{c\in \setC(\sla\sla)}c^k
=\sum_{c\in \setC(\sla)}\bigl(c^k+(1-c)^k\bigr)
=\sum_{i=1}^sa_i\sum_{c\in \setC(\sla)}{\binom c2}^i.
$$
The claim follows from Theorem \ref{th:content}.
\end{proof}

The following results are corollaries of
Theorems \ref{th:content:k} and \ref{th:content:1}.
\begin{thm}
Suppose that $k$ is a given nonnegative integer. Then,
\begin{align}
2^{n}n!\sum_{|\sla\sla|=2n}\frac{1}{H(\sla\sla)}
\sum_{c\in\setC(\sla\sla)}
\binom{c+k-1}{2k}&=\frac{2^{k+1}}{(k+1)!}\binom{n}{k+1},\\
	2^{n}n!\sum_{|\sla\sla|=2n}\frac{1}{H(\sla\sla)}
	\sum_{c\in \setC(\sla\sla)} c^2
	&=4\binom{n}{2}+\binom n1.
\end{align}
\end{thm}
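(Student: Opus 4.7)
The plan is to translate the doubled-distinct summations into summations over strict partitions via the identities \eqref{eq:Hlala} and \eqref{eq:Clala}, and then apply Theorems \ref{th:content:k} and \ref{th:content:1} directly. The first step is the ``dictionary'': from \eqref{eq:Hlala} the first-column hook lengths $\sla_i$ of $\sla$ are replaced by $2\sla_i$ in $\sla\sla$, while every other hook length of $\sla$ appears twice, so
\[
H(\sla\sla)=2^{\ell(\sla)}H(\sla)^{2}.
\]
Combined with $f_{\sla}=|\sla|!/H(\sla)$ from \eqref{eq:hookformula*}, this converts the prefactor $2^{n}n!/H(\sla\sla)$ into $2^{n-\ell(\sla)}f_{\sla}/H(\sla)$, which is exactly the weight appearing in Theorems \ref{th:content:k} and \ref{th:content:1}.

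For the first identity I would exploit a symmetry of the polynomial $\binom{x+k-1}{2k}$. Its roots lie at $x=1-k,2-k,\ldots,k$, and are symmetric about $x=1/2$, so
\[
\binom{x+k-1}{2k}=\binom{(1-x)+k-1}{2k}.
\]
By \eqref{eq:Clala}, $\setC(\sla\sla)=\setC(\sla)\cup\{1-c:c\in\setC(\sla)\}$, which paired with the symmetry above gives
\[
\sum_{c\in\setC(\sla\sla)}\binom{c+k-1}{2k}
 \;=\;2\sum_{c\in\setC(\sla)}\binom{c+k-1}{2k}.
\]
Feeding this together with the dictionary into Theorem \ref{th:content:k} produces the claimed $\dfrac{2^{k+1}}{(k+1)!}\binom{n}{k+1}$.

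For the second identity I would use the elementary identity $c^{2}+(1-c)^{2}=4\binom{c}{2}+1$, so that by \eqref{eq:Clala},
\[
\sum_{c\in\setC(\sla\sla)}c^{2}=4\sum_{c\in\setC(\sla)}\binom{c}{2}+|\sla|.
\]
Theorem \ref{th:content:1} with $\smu=\emptyset$ handles the first piece and contributes $\binom{n}{2}$, while the second piece (weighted by $2^{n-\ell(\sla)}f_{\sla}/H(\sla)$) reduces, via the second identity in \eqref{eq:hookformula*}, to $n\cdot 1=\binom{n}{1}$. Assembling these gives $4\binom{n}{2}+\binom{n}{1}$.

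There is no genuine obstacle here: once the dictionary $H(\sla\sla)=2^{\ell(\sla)}H(\sla)^2$ is noted, the whole argument is a short bookkeeping exercise. The only point that requires a moment of care is recognizing the $x\mapsto 1-x$ symmetry of $\binom{x+k-1}{2k}$, which is what makes the content sum over $\setC(\sla\sla)$ collapse cleanly to twice the content sum over $\setC(\sla)$.
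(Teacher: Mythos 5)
Your proposal is correct and follows essentially the same route as the paper, which states this result as a corollary of Theorems \ref{th:content:k} and \ref{th:content:1} via the relations \eqref{eq:Hlala} and \eqref{eq:Clala}. You simply make explicit the bookkeeping the paper leaves implicit: the dictionary $H(\sla\sla)=2^{\ell(\sla)}H(\sla)^2$, the invariance of $\binom{x+k-1}{2k}$ under $x\mapsto 1-x$, and the identity $c^2+(1-c)^2=4\binom{c}{2}+1$ together with the normalization $\sum_{|\sla|=n}2^{n-\ell(\sla)}f_\sla/H(\sla)=1$.
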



\section{The Littlewood decomposition and corners of usual partitions}\label{sec:corners}

In this section we recall 
some basic definitions and properties for usual partitions (see \cite{han4}, \cite[p.12]{Macdonald}, \cite[p.468]{stan1}, \cite[p.75]{JK}, \cite{stanton}).
Let $\setW$ be the set of bi-infinite binary sequences beginning
with infinitely many 0's and ending with infinitely many 1's. Each
element $w$ of $\setW$ can be represented by $(a'_i)_{i} =\cdots
a'_{-3}a'_{-2}a'_{-1}a'_0a'_1a'_2a'_3\cdots$. However, the representation is
not unique, since for any fixed integer $k$ the sequence
$(a'_{i+k})_i$ also represents $w$. The {\it canonical
representation} of $w$ is the unique sequence
$(a_i)_i= \cdots a_{-3}a_{-2}a_{-1}a_0a_1a_2a_3\cdots$
such that
$$
\#\{i\leq -1, a_i=1\}=\#\{i\geq 0, a_i=0\}.
$$
It will be further denoted by
$ \cdots a_{-3}a_{-2}a_{-1}.a_0a_1a_2a_3\cdots$
with a dot symbol inserted between the letters $a_{-1}$ and $a_0$.
\medskip
There is a natural one-to-one correspondence between $\setP$ and
$\setW$ (see, e.g. \cite[p.468]{stan1}, \cite{AF} for more details). Let $\la$
be a partition. We encode each horizontal edge of $\la$ by 1 and
each vertical edge by 0. Reading these (0,1)-encodings from top to
bottom and from left to right yields a binary word~$u$. By adding
infinitely many 0's to the left and infinitely many 1's to the
right of $u$ we get an element $w=\cdots 000u111\cdots\in\setW$.
Clearly, the map $\la\mapsto w$ is a one-to-one correspondence
between $\setP$ and $\setW$. For example, take
$\la=(6,3,3,1)$. Then~$u=0100110001$, so that $w=(a_i)_i=\cdots
1110100.110001000\cdots$ (see Figure~5).

\begin{figure}
\centering
\begin{center}
\includegraphics[]{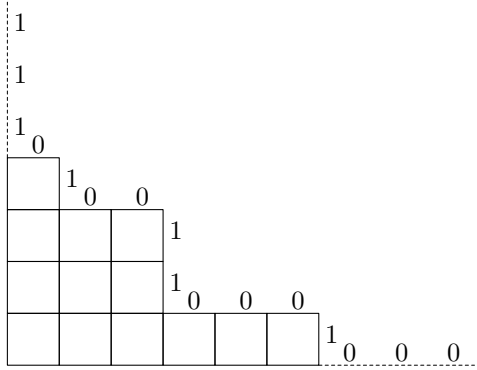}
\end{center}
\caption{From usual partitions to bi-infinite 01-sequences.}
\end{figure}
\medskip

Let $t$ be a positive integer. 
Recall that a partition $\la$ is a \emph{$t$-core} if it has no 
hook equal to $t$. The set of all $t$-core partitions (resp. $t$-core doubled distinct partitions) are denoted by $\setP_\tcore$ (resp. $\setDD_\tcore$).
The
\emph{Littlewood decomposition} maps a usual partition $\la$ to
$(\la_\tcore; \la^0, \la^1, \ldots, \la^{t-1})\in \setP_\tcore\times \setP^{t}$ such that
\smallskip

(P1) {$\la_\tcore$ is a
$t$-core and $\la^0, \la^1, \ldots, \la^{t-1}$ are usual partitions;}
\smallskip

(P2) {$|\la|=|\la_\tcore|+t(|\la^0|+|\la^1|+\cdots+|\la^{t-1}|)$;}
\smallskip

(P3) {$\{h/t\mid h\in \setH_t(\la)\}
= \setH(\la^0)\cup \setH(\la^1)\cup \cdots \cup \setH(\la^{t-1})$.}
\smallskip

\noindent The vector $(\la^0, \la^1, \ldots,
\la^{t-1})$ is called the {\it $t$-quotient} of the
partition~$\la$.
\medskip

It is well know that  (see \cite{stanton}) under the Littlewood decomposition, a doubled distinct partition $\lambda$ has image 
$(\lambda_{\tcore}; \lambda^0, \lambda^1,\ldots, \lambda^{t-1})\in \setDD_\tcore\times \setDD\times \setP^{t-1}$ where $\lambda^i$ is the conjugate partition of $\lambda^{t-i}$ for $1\leq i\leq t-1$.  

\medskip

\begin{figure}
\centering
\begin{center}
\includegraphics[]{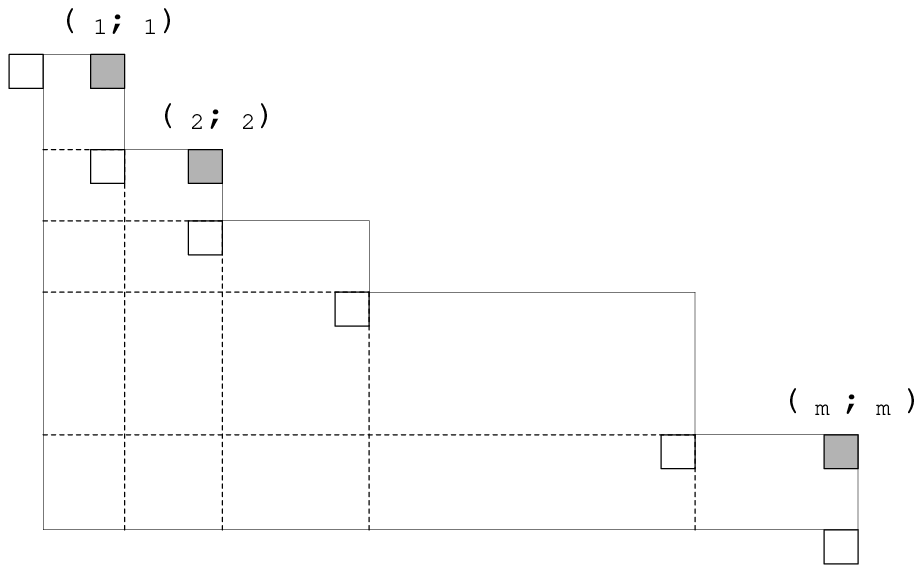}
\end{center}
\caption{A partition and its corners. The outer corners are labelled with
$(\alpha_i, \beta_i)$ ($i=1,2,\ldots, m$). The inner corners are indicated by
the dot symbol ``$\cdot$''.}
\end{figure}

For a usual partition
$\lambda$, the \emph{outer corners} (see \cite{hanxiong1, bandlow}) are the
boxes which can be removed to get a new partition. Let
$(\alpha_1,\beta_1),\ldots,(\alpha_{m},\beta_{m})$ be the
coordinates of outer corners such that $\alpha_1>\alpha_2>\cdots
>\alpha_m$. Let $y_j=\beta_j-\alpha_j$ be the contents of outer
corners for $1\leq j \leq m.$ We set $\alpha_{m+1}=\beta_0=0$ and
call
$(\alpha_1,\beta_0),(\alpha_2,\beta_1),\ldots,(\alpha_{m+1},\beta_{m})$
 the  \emph{inner corners} of $\lambda$. Let $x_i=\beta_i-\alpha_{i+1}$
 be the contents of inner corners for $0\leq i \leq m$ (see Figure 6).
It is easy to verify that $x_i$ and $y_j$ satisfy the following relation:
\begin{equation}
x_0<y_1<x_1<y_2<x_2<\cdots <y_m <x_m.
\end{equation}
Define (see \cite{hanxiong1})
\begin{equation}\label{def:qk1}
	\Psi^\nu(\lambda):=\Psi^\nu(\{x_i\}, \{y_j\})
\end{equation}  
for  each usual partition $\nu$.

\begin{lem} \label{th:XY}
Suppose that $\lambda$ is a partition whose set of contents of inner corners and set of contents of outer corners are $X(\lambda)=\{x_0,x_1,\ldots, x_m\}$ and $Y(\lambda)=\{y_1, y_2, \ldots,y_m\}$ respectively. Let $\lambda^{i+}=\lambda\cup\{\square_i\}$ where $c_{\square_i}=x_i$.
Then we have 
$$X(\lambda^{i+})\cup \{x_i,x_i\}\cup Y(\lambda)
=X(\lambda) \cup \{x_i+1,x_i-1\}\cup Y(\lambda^{i+}) .$$
\end{lem}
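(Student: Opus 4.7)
The plan is to leverage the bijection $\lambda \leftrightarrow (a_i)_i \in \setW$ between usual partitions and canonical bi-infinite $01$-sequences that was just recalled. Under this bijection, every two-letter factor $a_j a_{j+1}$ of the sequence encodes a lattice point on the boundary of $\lambda$ of a definite content: a factor $01$ at position $p$ corresponds to an inner corner of $\lambda$ whose content equals $p$ (up to the fixed offset coming from the canonical representation), and a factor $10$ at position $q$ corresponds to an outer corner of content $q$. Since adding the box $\square_i$ at the inner corner of content $x_i$ is nothing other than swapping the single factor $(a_p,a_{p+1})=(0,1)$ into $(1,0)$, only the three factors at positions $p-1$, $p$, $p+1$ can be affected. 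In particular, every inner or outer corner of $\lambda$ whose content lies outside $\{x_i-1, x_i, x_i+1\}$ is also a corner of $\lambda^{i+}$ of the same type, and contributes identically to both sides of the claimed multiset identity. The lemma therefore reduces to a finite local check on just these three diagonals.

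The check is a case distinction on the two letters $(a_{p-1}, a_{p+2})$ immediately flanking the swapped pair; there are exactly four cases. For each case I would tabulate, for each of the three contents $x_i-1$, $x_i$, $x_i+1$, which of \emph{inner corner, outer corner, nothing} is present in $\lambda$ and in $\lambda^{i+}$, then substitute those contributions into the two multisets $X(\lambda^{i+})\cup\{x_i,x_i\}\cup Y(\lambda)$ and $X(\lambda)\cup\{x_i+1,x_i-1\}\cup Y(\lambda^{i+})$. As a representative case, when $(a_{p-1},a_{p+2})=(1,0)$, both $x_i\pm 1$ were outer corners of $\lambda$ and disappear upon swapping, while $x_i$ itself turns from inner to outer; consequently $X(\lambda^{i+})=X(\lambda)\setminus\{x_i\}$ and $Y(\lambda^{i+})=(Y(\lambda)\setminus\{x_i-1,x_i+1\})\cup\{x_i\}$, and a direct substitution shows that both sides collapse to $X(\lambda)\cup\{x_i\}\cup Y(\lambda)$. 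The remaining cases $(0,0)$, $(0,1)$, $(1,1)$ are verified identically, each producing a perfect cancellation between the two multisets.

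The only genuine obstacle is a notational one: choosing the content-to-position offset in the canonical sequence so that a $01$-factor at position $p$ really corresponds to an inner corner of content $p$ in the sense of $x_i = \beta_i - \alpha_{i+1}$, and verifying that this offset is consistent with the outer-corner convention $y_j = \beta_j - \alpha_j$. Once this dictionary is fixed, no further global structure of $\lambda$ is needed, and the identity in Lemma \ref{th:XY} follows from the observation that a single box-addition is nothing but a two-letter transposition in the associated binary sequence.
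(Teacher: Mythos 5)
Your argument is correct: translating the box-addition at the inner corner of content $x_i$ into a transposition of an adjacent $01$-pair in the boundary word, and then checking the four possibilities for the flanking letters, is exactly the paper's proof, whose four cases (distinguished there by whether $\beta_i+1=\beta_{i+1}$ and whether $\alpha_{i+1}+1=\alpha_i$) correspond one-to-one to your four choices of $(a_{p-1},a_{p+2})$, and your representative case matches the paper's case (iv). The only difference is bookkeeping --- corner coordinates versus the $01$-sequence, the latter requiring the content-offset dictionary you flag but handling the extreme corners $i=0,m$ uniformly --- so this is essentially the same proof.
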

\begin{proof}
 Four cases are to be considered. (i) If
$\beta_{i}+1<\beta_{i+1}$ and $\alpha_{i+1}+1<\alpha_{i}$. Then,
the contents of inner corners and outer corners
of $\lambda^{i+}$ are $X\cup \{x_i-1, x_i+1\} \setminus \{x_i\}$ and
$Y\cup \{x_i\}$ respectively. (ii) If $\beta_{i}+1=\beta_{i+1}$ and
$\alpha_{i+1}+1<\alpha_{i}$, so that $y_{i+1}=x_i+1$. Hence the
contents of inner corners and outer corners of $\lambda^{i+}$ are
$X\cup \{x_i-1\} \setminus \{x_i\}$ and $Y\cup \{x_i\}\setminus
\{x_i+1\}$ respectively. (iii) If $\beta_{i}+1<\beta_{i+1}$ and
$\alpha_{i+1}+1=\alpha_{i}$, so that $y_{i}=x_i-1$. Then the
contents of inner corners and outer corners of $\lambda^{i+}$ are
$X\cup \{x_i+1\} \setminus \{x_i\}$ and $Y\cup \{x_i\}\setminus
\{x_i-1\}$ respectively. (iv) If $\beta_{i}+1=\beta_{i+1}$ and
$\alpha_{i+1}+1=\alpha_{i}$. Then $y_{i}+1=x_i=y_{i+1}-1$. The
contents of inner corners and outer corners of $\lambda^{i+}$ are $X
\setminus \{x_i\}$ and $Y\cup \{x_i\}\setminus \{x_i-1, x_i+1\}$
respectively. The claim is proved.
\end{proof}

The corners of the strict partition $\sla$ and the doubled distinct partition $\sla\sla$ are closely related.
\begin{lem} \label{th:XYstrictDD}
Suppose that $\sla$ is a strict partition whose set of contents of inner corners and set of contents of outer corners are $X(\sla)=\{x_0,x_1,\ldots, x_m\}$ and $Y(\sla)=\{y_1, y_2, \ldots,y_m\}$ respectively. Then, 
$$X(\sla\sla)\cup \{y_1,1-y_1, \ldots,y_m,1-y_m\}=Y(\sla\sla) \cup \{0,x_1,1-x_1,\ldots, x_m,1-x_m\}.$$

\end{lem}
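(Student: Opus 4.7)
The plan is to analyze the bi-infinite $01$-sequence $w = (a_i)_{i \in \mathbb{Z}} \in \setW$ associated to the doubled distinct partition $\sla\sla$. The key structural input is an explicit description of $w$ in terms of $\sla$, coming from the Frobenius description $(\sla_i \mid \sla_i - 1)$ of doubled distinct partitions: for $n \geq 1$ one has $a_n = 1$ iff $n \in \{\sla_1, \ldots, \sla_\ell\}$; the central position satisfies $a_0 = 0$; and the negative positions obey the symmetry $a_{-n} = 1 - a_n$ for $n \geq 1$. I would first establish this description using the bijection between $\setP$ and $\setW$ recalled at the start of Section~\ref{sec:corners}.

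Second, outer (resp.\ inner) corners of a partition correspond to the local patterns $a_{c-1}a_c = 01$ (resp.\ $10$) at adjacent positions $(c-1,c)$ in the $01$-sequence. Plugging in the above description of $\sla\sla$ and doing case analysis on the sign of $c$ yields the formulas
$$Y(\sla\sla) = Y(\sla) \cup \{1-y_j : 1 \leq j \leq m,\ y_j \geq 2\},$$
$$X(\sla\sla) = \{x_j, 1-x_j : 1 \leq j \leq m\} \cup \bigl(\{0\} \text{ if } \sla_\ell \geq 2\bigr).$$
The positive outer (resp.\ inner) corners of $\sla\sla$ match $Y(\sla)$ (resp.\ $X(\sla) \setminus \{x_0\} = \{x_1,\ldots,x_m\}$; the value $x_0 = 1$ is excluded because $a_0 = 0$ forbids the ``$10$'' pattern at positions $(0,1)$). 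The non-positive corners are then obtained from the positive ones via the reflection $c \mapsto 1-c$ coming from the symmetry $a_{-n} = 1-a_n$, together with possible exceptional singletons at $c = 0$ (inner) or $c = 1$ (outer) arising from the special value $a_0 = 0$.

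Finally, I would verify the claimed equality by splitting into the complementary cases $\sla_\ell = 1$ (equivalently $y_1 = 1$) and $\sla_\ell \geq 2$; in both cases, both sides reduce to the same set $\{x_j, 1-x_j, y_j, 1-y_j : 1 \leq j \leq m\} \cup \{0\}$. The subtle point is the accounting near the fixed region $\{0,1\}$ of the reflection $c \mapsto 1-c$: the exceptional corners at $c = 0$ (inner) and $c = 1$ (outer) appear in complementary fashion according to the case, which is precisely the asymmetry that motivates writing ``$\{0, x_1, 1-x_1, \ldots\}$'' on the right versus ``$\{y_1, 1-y_1, \ldots\}$'' on the left of the lemma statement. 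A cleaner alternative, avoiding the $01$-sequence machinery, is a direct four-case analysis analogous to the proof of Lemma~\ref{th:XY}, but the reflection argument makes the underlying symmetry more transparent.
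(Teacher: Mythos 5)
Your proposal is correct, and its key intermediate formulas, $Y(\sla\sla)=Y(\sla)\cup\{1-y_j\colon y_j\geq 2\}$ and $X(\sla\sla)=\{x_j,1-x_j\colon 1\leq j\leq m\}$ together with $0$ exactly when $\sla_{\ell}\geq 2$, are precisely the two-case description ($y_1=1$ versus $y_1\geq 2$) that constitutes the paper's entire proof; the concluding set-theoretic check is then identical in both arguments. What differs is how that description is justified: the paper simply reads the corners of $\sla\sla$ off the doubled diagram in the two cases, whereas you derive them from the binary-sequence (Maya diagram) picture, using that $\sla\sla$ has Frobenius coordinates $(\sla_i\mid\sla_i-1)$, hence a word with a distinguished letter at position $0$ and the antisymmetry $a_{-n}=1-a_n$, and that inner/outer corners correspond to the local patterns $10$/$01$. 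This buys a transparent explanation of the reflection $c\mapsto 1-c$ and of the exceptional corners at $c=0$ and $c=1$, at the cost of pinning down the encoding conventions carefully: note that the paper's own Figure~5 example is written in the complementary (transposed) convention to its stated definition of $\setW$, so you must fix which of $0,1$ encodes which step; your claims are consistent with one standard choice and check out against the Frobenius description, so this is only a matter of bookkeeping, not a gap. Finally, your remark that a direct case analysis in the spirit of Lemma~\ref{th:XY} would also work describes, in effect, exactly the proof the paper gives.
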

\begin{proof}
Two cases are to be considered.
(i) If $y_1=1$, the
contents of inner corners and outer corners of $\sla\sla$ are
$X(\sla\sla)=\{x_1,1-x_1,\ldots, x_m,1-x_m\}$ and $Y(\sla\sla)=\{1,y_2,1-y_2 \ldots,y_m,1-y_m\}$ respectively. 
(ii) If $y_1\geq 2$, the
contents of inner corners and outer corners of $\sla\sla$ are
$X(\sla\sla)=\{0,x_1,1-x_1,\ldots, x_m,1-x_m\}$ and $Y(\sla\sla)=\{y_1,1-y_1, \ldots,y_m,1-y_m\}$ respectively. 
This achieves the proof of Lemma~\ref{th:XYstrictDD}.
\end{proof}


\section{The $t$-difference operators for doubled distinct partitions} 
Let $t=2t'+1$ be an odd positive integer.
For each strict partition $\sla$, 
the doubled distinct partition associated with $\sla$ is denoted by $\la=\sla\sla$. 
The Littlewood decomposition maps 
$\sla\sla$ to
$$(\lambda_{\tcore}; \lambda^0, \lambda^1,\ldots, \lambda^{2t'})\in 
\setDD_\tcore\times \setDD\times \setP^{2t'}$$ 
where $\lambda^i$ is the conjugate partition of $\lambda^{t-i}$ for $1\leq i\leq t'$.  
For convenience we say that the {\it Littlewood decomposition} maps the strict partition $\sla$ to
\begin{equation}\label{def:LittlewoodDD}
\sla \mapsto (\sla_\tcore;
\sla^0,\lambda^1,\ldots,\lambda^{t'}),
\end{equation}
where $\sla_\tcore$ and $\sla^0$ are determined by
$\la_\tcore=\sla_\tcore \sla_\tcore$ and $\la^0 = \sla^0 \sla^0$.
Since the map \eqref{def:LittlewoodDD} is bijective, we always write
\begin{equation*}
\la = (\sla_\tcore; \sla^0,\lambda^1,\ldots,\lambda^{t'}).
\end{equation*}

Let $\la=\sla\sla=(\sla_\tcore; \sla^0,\lambda^1,\ldots,\lambda^{t'})$ and $\mu=\smu\smu=(\smu_\tcore; \smu^0,\mu^1,\ldots,\mu^{t'})$ be two doubled distinct partitions.
 If  $\lambda_\tcore=\mu_\tcore$, $\sla^0\supset \smu^0$ and $\lambda^i\supset \mu^i$ for $1\leq i\leq t'$, 
we write 
$\lambda\geq_t \mu$  and define
\begin{equation}
F_{\mu/\mu}:=1
\qquad \text{and}\qquad F_{\lambda/\mu}:=\sum\limits_{\substack{\lambda\geq_t\lambda^-\geq_t\mu\\
|\lambda/\lambda^-|=2t }}F_{\lambda^-/\mu}\qquad \text{(for $\lambda \neq \mu$)}.
\end{equation}
In fact,
$F_{\lambda/\mu}$
is the number of vectors 
$(P_0,P_1,\ldots,P_{t'})$
such that

 (1) $P_0$ {is a skew shifted 
Young tableau of shape}  $\sla^0/\smu^0$, 

 (2) $P_i$ $(1\leq i\leq t')$  {is a skew
Young tableau of shape}  $\lambda^i/\mu^i$, 

(3) {the union of
entries in} $P_0,P_1,\ldots,P_{t'}$ \emph{are} $1,\ 2,\ldots,\ 
|\sla^0/\smu^0|+\sum_{i=1}^{t'}|\lambda^i/\mu^i|$.

\noindent
Hence,
$$F_{\lambda/\mu}=\binom{|\sla^0/\smu^0|+\sum_{i=1}^{t'}|\lambda^i/\mu^i|}
{|\sla^0/\smu^0|,|\lambda^1/\mu^1|,\ldots,|\lambda^{t'}/\mu^{t'}|}f_{\sla^0/\smu^0}\prod_{i=1}^{t'}
 f_{\lambda^i/\mu^i}.$$
We set
\begin{equation}
F_\lambda:=
F_{\lambda/ \lambda_{\tcore}}=
\binom{|\sla^0|+\sum_{i=1}^{t'}|\lambda^i|}
{|\sla^0|,|\lambda^1|,\ldots,|\lambda^{t'}|}f_{\sla^0}\prod_{i=1}^{t'}
 f_{\lambda^i}
 =\frac{n!}{H({\sla^0})\prod_{i=1}^{t'} H({\lambda^i})}
 \end{equation}
and 
$$G_{\lambda}:=
\frac {2^{n-\ell(\sla^0)}}{t^nH({\sla^0})\prod_{i=1}^{t'} H({\lambda^i})}
=
\frac {2^{n-\ell(\sla^0)} F_\la}{t^n n!},
$$
 where $n=|\sla^0|+\sum_{i=1}^{t'}|\lambda^i|$.

When $t=1$, we have $t'=0,$  thus $F_\lambda=f_{\sla^0}$ and 
$G_\lambda = 2^{n-\ell(\sla^0)}/H(\sla^0)$.
Also, when $\lambda$ is a $t$-core doubled distinct partition, we have $F_\lambda=G_\lambda=1$.

\subsection{$t$-difference operators}

Let $g$ be a function of doubled distinct partitions and $\lambda$ be a doubled distinct partition.
The \emph{$t$-difference operator}~$D_t$ for doubled distinct partitions is defined by 
\begin{equation}\label{eq:diffDtDD}  D_tg(\lambda)=\sum\limits_{\substack{\lambda^{+}\geq_t\lambda\\ |\lambda^{+}/\lambda|=2t}}g(\lambda^{+})-g(\lambda).
\end{equation}
The higher-order $t$-difference operators  $D_t^k$ are defined by induction:
$$D_t^0g:=g \text{\quad and\quad }\ D_t^k g:=D_t(D_t^{k-1} g)\quad (k\geq 1).$$

\begin{lem} \label{th:Glambda}
Let $\lambda$ be a doubled distinct partition. Then,
$$
D_t (G_\lambda)=0.
$$
In other words,
$$G_{\lambda}=\sum\limits_{\substack{\lambda^+\geq_t\lambda\\
|\lambda^+/\lambda|=2t }}G_{\lambda^+}.
$$
\end{lem}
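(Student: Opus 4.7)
My plan is to break down the sum in $D_t(G_\lambda) = \sum_{\lambda^+} G_{\lambda^+} - G_\lambda$ by classifying the doubled distinct partitions $\lambda^+$ with $\lambda^+ \geq_t \lambda$ and $|\lambda^+/\lambda| = 2t$. Writing $\lambda = (\lambda_\tcore; \sla^0, \lambda^1, \ldots, \lambda^{t'})$ and setting $n = |\sla^0| + \sum_{i=1}^{t'}|\lambda^i|$, we have $|\lambda| = |\lambda_\tcore| + 2tn$, so the condition $|\lambda^+/\lambda|=2t$ forces $n^+ = n+1$. Since $\lambda^+$ must still be doubled distinct, its $t$-quotient is again of the form $((\sla^0_+)(\sla^0_+), \ldots)$ with the outer components conjugate in pairs. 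Therefore the only possibilities for $\lambda^+$ are: (a) add a box at some inner corner of $\sla^0$ (viewed as a strict partition), or (b) add a box at some inner corner of $\lambda^i$ for some $1 \leq i \leq t'$ (viewed as a usual partition).

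Next I would compute $G_{\lambda^+}/G_\lambda$ in each case. In case (a), writing $(\sla^0)^+$ for the new strict partition, $\ell((\sla^0)^+)$ equals $\ell(\sla^0)$ except when we attach the box to the corner of content $x_0=1$, in which case it equals $\ell(\sla^0)+1$; the ratio becomes
\begin{equation*}
\frac{G_{\lambda^+}}{G_\lambda} = \frac{2^{1+\ell(\sla^0)-\ell((\sla^0)^+)}}{t}\cdot\frac{H(\sla^0)}{H((\sla^0)^+)}.
\end{equation*}
In case (b), $\sla^0$ is unchanged, so
\begin{equation*}
\frac{G_{\lambda^+}}{G_\lambda} = \frac{2}{t}\cdot\frac{H(\lambda^i)}{H((\lambda^i)^+)}.
\end{equation*}

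Now I would combine two known identities. Summing case (a) over all inner corners of $\sla^0$ and applying \eqref{eq:DH=0} (i.e.\ $\opDs(1/H(\sla))=0$, whose expansion is exactly $2\sum_{k\geq 1} H(\sla^0)/H((\sla^0)^{k+}) + H(\sla^0)/H((\sla^0)^{0+}) = 1$) gives a total contribution of $1/t$. Summing case (b) over all inner corners of $\lambda^i$ and using the well-known identity $\sum_{\mu = \lambda^i + \square} H(\lambda^i)/H(\mu) = 1$ (a direct consequence of the hook length formula, since $f_\mu = (|\lambda^i|+1)f_{\lambda^i} H(\lambda^i)/H(\mu)$ and $\sum_\mu f_\mu = (|\lambda^i|+1) f_{\lambda^i}$) contributes $2/t$ for each $i$.

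Putting these together,
\begin{equation*}
\sum_{\lambda^+} \frac{G_{\lambda^+}}{G_\lambda} = \frac{1}{t} + \sum_{i=1}^{t'}\frac{2}{t} = \frac{1+2t'}{t} = 1,
\end{equation*}
since $t=2t'+1$, which yields $D_t(G_\lambda)=0$. I do not foresee a serious obstacle: the only subtle bookkeeping is making sure the exceptional case $x_0=1$ in case (a) is properly absorbed by the strict-partition difference operator $\opDs$, which it is by design. The heart of the argument is the clean way the odd prime-like identity $1+2t'=t$ glues together the strict-partition identity (which supplies the $1/t$) with $t'$ copies of the ordinary hook identity (each supplying $2/t$).
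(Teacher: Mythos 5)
Your proposal is correct and follows essentially the same route as the paper: split the additions $\lambda^+\geq_t\lambda$ with $|\lambda^+/\lambda|=2t$ according to whether the new box lands in $\sla^0$ or in some $\lambda^i$ ($1\leq i\leq t'$), compute the ratios $G_{\lambda^+}/G_\lambda$, and sum to get $\tfrac1t+t'\cdot\tfrac2t=1$. The only cosmetic difference is that the paper cites Theorem 3.3 of \cite{hanxiong3} and Lemma 2.2 of \cite{hanxiong1} for the two summation identities, whereas you rederive them from $\opDs(1/H(\sla))=0$ (equation \eqref{eq:DH=0}) and the classical hook length formula, which is the same underlying content.
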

\begin{proof}
Write $\lambda=(\sla_\tcore;
\sla^0,\lambda^1,\ldots,\lambda^{t'})$.
By Theorem $3.3$ in  \cite{hanxiong3} we obtain
$$
\sum_{|(\sla^0)^+/\sla^0|=1}
\frac{G_{(\sla_\tcore;\ 
(\sla^0)^+,\lambda^1,\ldots,\lambda^{t'})}}{G_\lambda}
=
\sum_{|(\sla^0)^+/\sla^0|=1}
\frac{2^{1+\ell(\sla^0)-\ell((\sla^0)^+)}H(\sla^{0})}{tH((\sla^{0})^+)}
=\frac{1}{t}.
$$
For $1\leq i\leq t'$
we have 
$$
\sum_{|(\lambda^{i})^+/\lambda^{i}|=1}
\frac{G_{(\sla_\tcore;
\sla^0,\lambda^1,\ldots,\lambda^{i-1},(\lambda^{i})^+,\lambda^{i+1},\ldots,\lambda^{t'})}}{G_\lambda}
=
\sum_{|(\lambda^{i})^+/\lambda^{i}|=1}
\frac{2H(\lambda^{i})}{tH((\lambda^{i})^+)}
=
\frac{2}{t}
$$
by Lemma 2.2 in \cite{hanxiong1}. Summing the above equalities, we get
\begin{equation*}\sum\limits_{\substack{\lambda^+\geq_t\lambda\\
|\lambda^+/\lambda|=2t }}\frac{G_{\lambda^+}}{G_\la}
=\frac 1t + \sum_{i=1}^{t'} \frac 2t = 1.\qedhere
\end{equation*}
\end{proof}

\begin{lem} \label{th:telescope}
Suppose that $\mu$ is a  given doubled distinct partition and $g$ is a function    of doubled distinct
 partitions. For every $n\in\mathbb{N}$, let
$$
P(\mu,g; n):=\sum\limits_{\substack{\la\in\setDD,\ \lambda\geq_t\mu\\
|\lambda/\mu|=2nt }}F_{\lambda/\mu}g(\lambda).
$$
Then $$P(\mu,g; n+1)-P(\mu, g; n)=P(\mu, {D_tg}; n).$$

\end{lem}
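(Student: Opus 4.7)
My plan is to prove the identity by a direct manipulation: expand the definition of $D_t g$ inside $P(\mu, D_t g; n)$, split the resulting double sum into two pieces, and recognize each piece using the recursive definition of $F_{\lambda/\mu}$.

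More concretely, I would start from
\begin{equation*}
P(\mu, D_t g; n) = \sum_{\substack{\lambda \geq_t \mu \\ |\lambda/\mu| = 2nt}} F_{\lambda/\mu} \Bigl( \sum_{\substack{\lambda^+ \geq_t \lambda \\ |\lambda^+/\lambda| = 2t}} g(\lambda^+) - g(\lambda) \Bigr).
\end{equation*}
The subtracted piece is immediately $P(\mu, g; n)$. For the remaining double sum, I would swap the order of summation: each $\lambda^+$ that appears is a doubled distinct partition satisfying $\lambda^+ \geq_t \mu$ with $|\lambda^+/\mu| = 2(n+1)t$, and the inner sum becomes
\begin{equation*}
\sum_{\substack{\lambda^+ \geq_t \mu \\ |\lambda^+/\mu| = 2(n+1)t}} g(\lambda^+) \sum_{\substack{\mu \leq_t \lambda \leq_t \lambda^+ \\ |\lambda^+/\lambda| = 2t}} F_{\lambda/\mu}.
\end{equation*}

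The key step is then to identify the inner sum over intermediate $\lambda$ with $F_{\lambda^+/\mu}$. This is exactly the recursive definition of $F_{\lambda^+/\mu}$: by construction,
\begin{equation*}
F_{\lambda^+/\mu} = \sum_{\substack{\mu \leq_t \lambda \leq_t \lambda^+ \\ |\lambda^+/\lambda| = 2t}} F_{\lambda/\mu}
\end{equation*}
whenever $\lambda^+ \neq \mu$. Substituting this in yields $P(\mu, g; n+1)$, so combining the two pieces gives $P(\mu, D_t g; n) = P(\mu, g; n+1) - P(\mu, g; n)$ as required.

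I do not expect any serious obstacle here; the lemma is a formal telescoping identity, and the only thing to verify carefully is the order-of-summation swap together with the bookkeeping of the constraints $\lambda \geq_t \mu$, $\lambda^+ \geq_t \lambda$, $|\lambda/\mu| = 2nt$, and $|\lambda^+/\lambda| = 2t$, which combine to $\lambda^+ \geq_t \mu$ and $|\lambda^+/\mu| = 2(n+1)t$. The recursive definition of $F_{\lambda/\mu}$ is purpose-built to make this telescoping work, so no additional machinery (such as Theorem~\ref{th:hanxiong3} or Lemma~\ref{th:Glambda}) is needed in the argument.
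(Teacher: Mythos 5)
Your proposal is correct and is essentially the paper's own proof run in reverse: the paper starts from $P(\mu,g;n+1)-P(\mu,g;n)$, expands $F_{\nu/\mu}$ via its recursive definition and swaps the order of summation, which is exactly your manipulation read backwards. The bookkeeping you flag (that $\lambda\geq_t\mu$, $|\lambda/\mu|=2nt$, $|\lambda^+/\lambda|=2t$ combine to $\lambda^+\geq_t\mu$, $|\lambda^+/\mu|=2(n+1)t$, and conversely) is indeed the only thing to check, and it holds, so no further machinery is needed.
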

\begin{proof}
The proof is straightforward:
\begin{align*}
P(\mu,g; n+1)-P(\mu, g; n)
&=\sum\limits_{\substack{\nu\geq_t\mu\\
|\nu/\mu|=2(n+1)t }}F_{\nu/\mu}g(\nu)-\sum\limits_{\substack{\lambda\geq_t\mu\\
|\lambda/\mu|=2nt }}F_{\lambda/\mu}g(\lambda)\\
&= \sum\limits_{\substack{\nu\geq_t\mu\\
|\nu/\mu|=2(n+1)t }}\sum\limits_{\substack{\nu\geq_t\nu^-\geq_t\mu\\
|\nu/\nu^-|=2t }}F_{\nu^-/\mu}g(\nu)-\sum\limits_{\substack{\lambda\geq_t\mu\\
|\lambda/\mu|=2nt }}F_{\lambda/\mu}g(\lambda)\\
&=\sum\limits_{\substack{\lambda\geq_t\mu\\
|\lambda/\mu|=2nt }}F_{\lambda/\mu}\bigl(\sum\limits_{\substack{\lambda^{+}\geq_t\lambda\\
|\lambda^{+}/\lambda|=2t}}g(\lambda^+)-g(\lambda)\bigr) \\
&= P(\mu,D_tg; n).\qedhere
\end{align*}
\end{proof}

{\it Example}. Let  $g(\lambda)=G_\lambda$. Then $D_tg(\lambda)=0$
by Lemma \ref{th:Glambda}, which means that $P(\mu, D_tg; n)=0$.
Consequently, $P(\mu, G_\la; n+1)=P(\mu, G_\la; n)=\cdots=P(\mu, G_\la; 0)=G_\mu,$ or
\begin{equation}
\sum\limits_{\substack{\la\in\setDD,\ \lambda\geq_t\mu\\
|\lambda/\mu|=2nt }}F_{\lambda/\mu} G_\lambda= G_\mu.
\end{equation}
When $\mu$ is a $t$-core doubled distinct partition, 
the above identity becomes
\begin{equation*}
\sum\limits_{\substack{\lambda\geq_t\mu\\
|\lambda/\mu|=2nt }}
\frac{n!}{H({\sla^0})\prod_{i=1}^{t'} H({\lambda^i})} \times
\frac {2^{n-\ell(\sla^0)}}{t^nH({\sla^0})\prod_{i=1}^{t'} H({\lambda^i})}
= G_\mu, 
\end{equation*}
or
\begin{equation*}
\sum\limits_{\substack{\lambda \in \mathcal{DD}, \
|\lambda/\mu|=2nt \\ \lambda\geq_t\mu } } \frac{(2t)^n n!}
{H_t(\la)}\,=1,
\end{equation*}
which implies \eqref{eq:DDt}. 

\medskip

\begin{thm}\label{th:main3}
Let $g$ be a function of  doubled distinct partitions and $\mu$ be a given doubled distinct partition.
Then, 
\begin{equation}\label{eq:main3DD}
P(\mu, g; n)=\sum\limits_{\substack{\la\in\setDD,\ \lambda\geq_t\mu\\
|\lambda/\mu|=2nt }}F_{\lambda/\mu}g(\lambda)
=\sum_{k=0}^n\binom{n}{k}D_t^kg(\mu)
\end{equation}
and
\begin{equation}\label{eq:main3bDD}
D_t^ng(\mu)=\sum_{k=0}^n(-1)^{n+k}\binom{n}{k}P(\mu, g; k).
\end{equation}
In particular, if there exists some positive integer $r$ such that
$D_t^r g(\lambda)=0$ for every doubled distinct partition $\lambda\geq_t \mu$, then $P(\mu, g; n)$ is
a polynomial in $n$ with degree at most $r-1$.
\end{thm}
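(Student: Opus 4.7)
The plan is to prove the two identities simultaneously by finite-difference calculus, using Lemma~\ref{th:telescope} as the only non-trivial input, and then to deduce the polynomiality consequence from the first identity.

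First, I would prove \eqref{eq:main3DD} by induction on $n$. The base case $n=0$ is immediate: $P(\mu,g;0) = F_{\mu/\mu}\,g(\mu) = g(\mu) = D_t^0 g(\mu)$, which matches $\sum_{k=0}^0 \binom{0}{k} D_t^k g(\mu)$. For the inductive step, I would apply Lemma~\ref{th:telescope} to write
\begin{equation*}
P(\mu,g;n+1) = P(\mu,g;n) + P(\mu,D_tg;n),
\end{equation*}
then use the inductive hypothesis separately on each summand (the second with the function $D_tg$ in place of $g$). The resulting expression is
\begin{equation*}
\sum_{k=0}^n \binom{n}{k} D_t^k g(\mu) + \sum_{k=0}^n \binom{n}{k} D_t^{k+1}g(\mu),
\end{equation*}
and after re-indexing the second sum and applying Pascal's identity $\binom{n}{k-1}+\binom{n}{k}=\binom{n+1}{k}$, this collapses to $\sum_{k=0}^{n+1}\binom{n+1}{k} D_t^k g(\mu)$, closing the induction.

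Second, \eqref{eq:main3bDD} is the binomial inverse of \eqref{eq:main3DD}. I would derive it either by a short induction on $n$ (using $D_t^{n+1}g(\mu) = D_t^n(D_tg)(\mu)$ together with $D_tg(\mu) = P(\mu,g;1) - P(\mu,g;0)$ and Pascal) or, more cleanly, by invoking the standard identity that the finite-difference transforms $\{a_n\} \leftrightarrow \{b_n\}$ given by $b_n = \sum_k \binom{n}{k} a_k$ and $a_n = \sum_k (-1)^{n+k}\binom{n}{k} b_k$ are mutual inverses; here $a_k = D_t^k g(\mu)$ and $b_n = P(\mu,g;n)$.

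Finally, for the polynomiality consequence, I would observe that if $D_t^r g(\lambda)=0$ for every $\lambda\geq_t \mu$, then in particular the iterated differences $D_t^k g(\mu)$ vanish for all $k\geq r$ (since each application of $D_t$ at a partition $\geq_t \mu$ only evaluates $g$ at partitions that are still $\geq_t \mu$). Therefore the right-hand side of \eqref{eq:main3DD} truncates to $\sum_{k=0}^{r-1}\binom{n}{k} D_t^k g(\mu)$, which is a polynomial in $n$ of degree at most $r-1$, as claimed. I do not foresee a genuine obstacle: the only subtle point is checking that the ``domain'' $\{\lambda : \lambda\geq_t \mu\}$ is closed under the operation $\lambda \mapsto \lambda^+$ used to define $D_t$, so that the vanishing hypothesis propagates through all iterations of $D_t$ starting at $\mu$; this is immediate from the definition of $\geq_t$.
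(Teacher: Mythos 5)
Your proposal is correct and follows essentially the same route as the paper: identity \eqref{eq:main3DD} by induction on $n$ via the recurrence $P(\mu,g;n+1)=P(\mu,g;n)+P(\mu,D_tg;n)$ from Lemma~\ref{th:telescope} together with Pascal's identity, identity \eqref{eq:main3bDD} by binomial (M\"obius) inversion, and the polynomiality statement by noting that $D_t^kg(\mu)=0$ for $k\geq r$ truncates the sum to a polynomial of degree at most $r-1$. Your remark that the vanishing hypothesis propagates because $\{\lambda:\lambda\geq_t\mu\}$ is closed under adding the relevant boxes is a correct and welcome explicit check that the paper leaves implicit.
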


\begin{proof}
Identity \eqref{eq:main3DD} is proved by induction. The case $n=0$
is obvious. Assume that \eqref{eq:main3DD} is true for some
nonnegative integer $n$. By Lemma \ref{th:telescope} we obtain
\begin{align*}
	P(\mu, g; n+1)&= P(\mu, g; n)+ P(\mu, {D_tg}; n)
\\&=\sum_{k=0}^n\binom{n}{k}D_t^kg(\mu)+\sum_{k=0}^n\binom{n}{k}D_{t}^{k+1}g(\mu)
\\&=\sum_{k=0}^{n+1}\binom{n+1}{k}D_t^kg(\mu).
\end{align*}
Identity \eqref{eq:main3bDD} follows from the famous M\"obius inversion
formula \cite{Rota1964}.
\end{proof}

\subsection{$\mu$-admissible functions of doubled distinct partitions}\label{sec:ProofMain} 
Let $\mu=\smu\smu$ be a $t$-core doubled distinct partition.
A function $g$ of doubled distinct partitions  is called {\it $\mu$-admissible}, if for  each given $1\leq i \leq t'$ (resp. $i=0$),
$g(\lambda^+)-g(\lambda)$
is a polynomial in $c_{\square_i}$  (resp. $\binom{c_{\square_0}}{2}$)  for every pair of partitions
$$\lambda=(\smu;
\sla^0,\lambda^1,\ldots,\lambda^{t'})$$
and
$$
\lambda^+=(\smu;
\sla^0,\lambda^1,\ldots,\lambda^{i-1},\lambda^{i}\cup{\square_i},\lambda^{i+1},\ldots,\lambda^{t'})
$$
$$
(\text{resp.\ } \lambda^+=(\smu;
\sla^0\cup{\square_0},\lambda^1,\ldots,\lambda^{t'})),
$$
whose coefficients are of form
$$
\sum K(\mu,i; \tau^0, \tau^1, \ldots, \tau^{t'})\,\Phi^{\tau^0}(\sla^0)\prod_{j=1}^{t'}\Psi^{\tau^j}(\lambda^j),$$ 
where the summation is taken over the set of $(t'+1)$-tuple of usual partitions
$(\tau^0, \tau^1, \ldots, \tau^{t'})$ and $K$ is a function.

\goodbreak

\begin{lem}\label{th:admissible}
Let $\mu$ be a $t$-core doubled distinct partition. Then,
the two functions of doubled distinct partitions
$ \sum_{h\in \setH(\lambda)}h^{2r} $
and
$ \sum_{c\in \setC(\lambda)}c^{r} $ 
are $\mu$-admissible for 
any nonnegative integer~$r$. 
\end{lem}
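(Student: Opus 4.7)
The plan is to treat the two functions separately, and within each to handle the two cases $i=0$ (a box added to the strict quotient $\sla^0$) and $1\le i\le t'$ (a box added to $\lambda^i$, which forces a conjugate box in $\lambda^{t-i}=(\lambda^i)'$). The common mechanism is that under the Littlewood decomposition such a quotient modification corresponds to attaching a pair of $t$-ribbons to $\lambda$: each ribbon has $t$ boxes of consecutive contents, and the starting content of each is an affine function of the corner content of the added quotient box, with slope and intercept depending only on $\mu$, $i$, $t$.

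First, the content sum $g(\lambda)=\sum_{c\in\setC(\lambda)}c^r$ is handled directly. For $1\le i\le t'$ with $c_{\square_i}=x$, the two attached ribbons (one from the added box in $\lambda^i$, one from the conjugate box in $\lambda^{t-i}$) have starting contents affine in $x$ and $-x$, so $g(\lambda^+)-g(\lambda)$ is a polynomial of degree $r$ in $x$ whose coefficients depend only on $\mu,i,t$ and are thus constant multiples of $\Phi^{\emptyset}(\sla^0)\prod_{j=1}^{t'}\Psi^{\emptyset}(\lambda^j)=1$. For $i=0$ with $c_{\square_0}=x$, the extra box in $\sla^0$ induces two boxes in $\lambda^0=\sla^0\sla^0$ at contents $x$ and $1-x$ by \eqref{eq:Clala}, hence two $t$-ribbons with starting contents affine in $x$ and in $1-x$. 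The resulting polynomial in $x$ is symmetric under $x\leftrightarrow 1-x$, hence a polynomial in the elementary symmetric functions $x+(1-x)=1$ and $x(x-1)=2\binom{x}{2}$, i.e.\ a polynomial in $\binom{c_{\square_0}}{2}$, as required.

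For the hook-length sum $g(\lambda)=\sum_{h\in\setH(\lambda)}h^{2r}$, I would split the change by divisibility of $h$ by $t$. The hooks divisible by $t$ equal $t$ times the hooks of the quotients, so the contribution from this piece to $g(\lambda^+)-g(\lambda)$ is controlled by the usual-partition corner formula (Lemma 2.2 of \cite{hanxiong1}, combined with Lemma~\ref{th:XY}) when $1\le i\le t'$, and by Theorem~\ref{th:hanxiong3} together with Lemma~\ref{x2} when $i=0$; in both instances it becomes a polynomial in $x$ (resp.\ $\binom{x}{2}$) whose coefficients are corner power sums at the modified quotient, i.e.\ of the required form $\Psi^{\tau^j}(\lambda^j)$ or $\Phi^{\tau^0}(\sla^0)$. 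The hooks not divisible by $t$ come from cross-runner pairs on the $t$-abacus; moving the bead on runner $i$ (and, for $i\ge 1$, simultaneously on runner $t-i$) alters precisely those hooks whose other endpoint lies on a different runner, and each such change is a polynomial in the new bead position, hence in $x$, with coefficients obtained by summing powers of bead positions on the other runners. Translating these bead-position power sums into the corner-content power sums via the dictionary underlying \eqref{def:qk1} and \eqref{def:qk} produces coefficients of the required form $\Phi^{\tau^0}(\sla^0)\prod_{j}\Psi^{\tau^j}(\lambda^j)$. In the $i=0$ case, the $x\leftrightarrow 1-x$ symmetry together with Lemma~\ref{x2} again repackages the $x$-dependence as $\binom{c_{\square_0}}{2}$-dependence.

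The main obstacle will be the bookkeeping for the cross-runner hook-length contributions in the non-divisible piece, and in particular the clean translation of sums over bead positions on the unmodified runners into the corner-content power sums $\Phi^{\tau^0}(\sla^0)\prod_{j}\Psi^{\tau^j}(\lambda^j)$. The $i=0$ case is the most delicate: one must verify that the two ribbons introduced by the doubling combine to yield dependence on $\binom{c_{\square_0}}{2}$ rather than on $c_{\square_0}$ itself, and Lemma~\ref{x2} is precisely the tool tailored for this repackaging.
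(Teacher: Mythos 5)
Your overall strategy is the same as the paper's: reduce everything to an explicit formula for $g(\lambda^+)-g(\lambda)$ when one box is added to a single quotient component together with its forced mirror image (the conjugate box in $\lambda^{t-i}$ for $1\le i\le t'$, the second box of $\lambda^0=\sla^0\sla^0$ for $i=0$), use the symmetry $c\leftrightarrow 1-c$ in the $i=0$ case to convert dependence on $c_{\square_0}$ into dependence on $\binom{c_{\square_0}}{2}$, and recognize the coefficients as power sums over inner/outer corner contents of the quotient components. Your treatment of $\sum_{c}c^{r}$ is exactly the paper's statements (C1)--(C2), which it deduces from Lemma \ref{th:contentadd1}: the added contents are $\{tc_{\square_i}+b_i-j\}\cup\{-tc_{\square_i}+b_{t-i}-j\}$, resp.\ $\{tc_{\square_0}+b_0-j\}\cup\{t(1-c_{\square_0})+b_0-j\}$, $0\le j\le t-1$, so your ribbon argument is a correct substitute. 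Likewise, your use of Theorem \ref{th:hanxiong3} and Lemma \ref{x2} for the $t$-divisible hooks in the $i=0$ case mirrors the computation already carried out in the proof of Theorem \ref{th:strictmain}.

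The one genuine gap is in the hooks not divisible by $t$. The paper does not redo this analysis: it invokes Lemma \ref{th:hookdiff} (Lemma 5.4 of \cite{hanxiong2}), which already expresses the change of $\sum_{h\equiv k}h^{2r}+\sum_{h\equiv t-k}h^{2r}$ as a finite alternating sum over the corner contents $x_{j,s},y_{j,s}$ of the relevant quotient components shifted by the $b_j(\mu)$, i.e.\ precisely in the admissible form; the proof then just applies it twice (once per added box) to get (H1)--(H2). Your abacus sketch, as stated, contains an ill-defined step: ``coefficients obtained by summing powers of bead positions on the other runners'' is not literally meaningful, since every runner carries infinitely many beads; the whole content of the cited lemma is the telescoping of these infinite families into the finite inner-minus-outer corner sums, and that telescoping is what you would have to carry out if you do not simply quote the lemma. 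Two further points your plan must absorb: (i) because the two ribbons are added simultaneously, the second addition sees a corner configuration already modified by the first, which produces extra correction terms (the trailing terms such as $(-tc_{\square_i}+b_{t-i}-t(c_{\square_i}\pm1)-b_0)^{2r}-2(-tc_{\square_i}+b_{t-i}-tc_{\square_i}-b_0)^{2r}$ in the paper's (H1)); they are polynomials in $c_{\square_i}$ with constant coefficients, hence harmless, but they do not appear in your sketch; (ii) inside the coefficients coming from runner $0$ one must pair $x_{0,s}$ with $1-x_{0,s}$ and $y_{0,s}$ with $1-y_{0,s}$ (Lemma \ref{th:XYstrictDD}) and apply the Lemma \ref{x2} mechanism a second time, so that these coefficients come out as $\Phi^{\tau^0}(\sla^0)$ rather than as plain power sums of corner contents; your ``dictionary'' remark gestures at this but it needs to be done explicitly. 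With Lemma \ref{th:hookdiff} quoted, the rest of your argument goes through and coincides with the paper's proof.
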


To prove Lemma \ref{th:admissible}, we recall some results
on the multisets of hook lengths and contents, obtained in \cite{hanxiong2}. 
Suppose that a given $t$-core partition $\mu$
has 01-sequence $w(\mu)=(a_{\mu,j})_{j\in \mathbb{Z}}$.
For $0\leq i\leq t-1$ we define \cite{hanxiong2} 
$$
b_i:=b_i(\mu)=\min\{j\in \mathbb{Z}:j\equiv  i(\modsymb t),\
a_{\mu,j}=1\}.
$$


\begin{lem}[Lemma 5.3 of \cite{hanxiong2}] \label{th:contentadd1}
Let $\la$ be a partition and 
$(\la_\tcore; \la^0, \la^1, \ldots, \la^{t-1})$ be the image of the Littlewood decomposition of $\la$.  Then,
$$
\mathcal{C}(\lambda)\setminus \mathcal{C}(\la_\tcore)= \bigcup_{i=0}^{t-1}
\{tc+b_i(\la_\tcore)-j:0\leq j\leq t-1, c \in \setC(\lambda^i)\}.
$$
\end{lem}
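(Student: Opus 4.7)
The plan is induction on $m := \sum_i |\la^i| = (|\la|-|\la_\tcore|)/t$. The base case $m=0$ is immediate: $\la=\la_\tcore$, every $\la^i$ is empty, and both sides of the identity vanish. For the inductive step, I pick an outer corner $\square\in\la^i$ with content $c$. By the standard compatibility between single-box moves on the $t$-quotient and $t$-ribbon moves on $\la$, there is a partition $\la^-$ obtained from $\la$ by removing a single $t$-ribbon $R$, whose Littlewood decomposition differs from that of $\la$ only in the $i$-th component, which becomes $(\la^i)^-$. Applying the induction hypothesis to $\la^-$ reduces the entire statement to the single-ribbon identity
\[
\setC(R) \;=\; \{\,tc + b_i(\la_\tcore) - j : 0 \le j \le t-1\,\}.
\]

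To establish this, I would work in the 01-sequence model $w(\nu)=(a_{\nu,j})_{j\in\mathbb{Z}}$. First, a direct row-by-row count shows that for any partition $\nu$ and any integer $c$, the multiplicity of $c$ in $\setC(\nu)$ equals the finite regularized difference $\#\{j\ge c: a_{\nu,j}=1\} - \#\{j\ge c: a_{\emptyset,j}=1\}$, because $w(\nu)$ and $w(\emptyset)$ agree far from the origin and the 1-positions of $w(\nu)$ are precisely the shifted row-lengths. Second, under the Littlewood decomposition, positions in $w(\la^i)$ correspond bijectively to positions of $w(\la)$ lying in the residue class $i$ modulo $t$ via $k \leftrightarrow b_i(\la_\tcore) + tk$. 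Hence the single-box move $(\la^i)^- \to \la^i$, which swaps a $1$ and a $0$ at the two consecutive positions $c-1$ and $c$ in $w(\la^i)$, translates into swapping a $1$ and a $0$ at positions $P := b_i(\la_\tcore) + t(c-1)$ and $P+t$ in $w(\la)$, the usual abacus realization of a $t$-ribbon addition.

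Applying the multiplicity formula before and after this swap, the multiplicity of content $c'$ increases by exactly $1$ when $c'\in\{P+1,\ldots,P+t\}$ and is otherwise unchanged, since the quantity $\#\{j\ge c': a_j=1\}$ changes by $+1$ precisely on this interval. Therefore the content multiset of the removed ribbon $R$ equals
\[
\{P+1,\ldots,P+t\} \;=\; \{b_i(\la_\tcore) + tc - (t-1),\,\ldots,\, b_i(\la_\tcore)+tc\} \;=\; \{tc + b_i(\la_\tcore) - j : 0 \le j \le t-1\},
\]
which is precisely the claimed set. The main obstacle is to track carefully the position correspondence between $w(\la^i)$ and the residue-$i$ positions of $w(\la)$---in particular verifying that the shift equals $b_i(\la_\tcore)$ with the correct sign under the paper's sign conventions---after which the cell-counting formula reduces the rest to a short arithmetic verification.
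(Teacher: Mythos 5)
Your argument is correct. Note first that this paper does not prove the lemma at all --- it is imported verbatim as Lemma~5.3 of \cite{hanxiong2} --- so there is no in-paper proof to compare against; your proposal supplies the standard abacus argument, which is essentially what the cited reference does. The three ingredients all check out: (i) the regularized counting formula for the multiplicity of a content $c$ (equivalently, $\#\{i\ge 1:\la_i-i\ge c\}-\#\{i\ge 1:-i\ge c\}$, which agrees with the number of cells on the diagonal $j-i=c$ in both the $c\ge 1$ and $c\le 0$ cases); (ii) the identification of position $k$ of the canonical word of $\la^i$ with position $b_i(\la_\tcore)+tk$ of $w(\la)$, which is forced because runner $i$ of $\la$ carries the same charge as runner $i$ of its core, and on the core the $1$'s of runner $i$ sit exactly at $b_i+tk$, $k\ge 0$; and (iii) the observation that moving a bead from $P$ to $P+t$ changes the counting function by $+1$ exactly on the window $P<c'\le P+t$, so the ribbon's content multiset is $\{P+1,\dots,P+t\}=\{tc+b_i(\la_\tcore)-j:0\le j\le t-1\}$. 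The induction then assembles these single-ribbon identities into the stated multiset equality. One caveat you rightly flag: the paper's own description of the $01$-encoding is internally inconsistent (the text says the word begins with $0$'s and ends with $1$'s, while the worked example for $(6,3,3,1)$ places the $1$'s at the positions $\la_i-i$, which begins with $1$'s), but your computation is insensitive to which of the two complementary conventions is used, since the content multiplicities and the ribbon window are determined by the same regularized difference either way; only the definition of $b_i$ as a minimum forces the ``ends with infinitely many $1$'s'' reading.
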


\begin{lem}[Lemma 5.4 of \cite{hanxiong2}]\label{th:hookdiff}
Let $0\leq i \leq t-1$,
$\la$ and $\la^+$ be two usual partitions whose
images of the Littlewood decomposition are  
$(\la_\tcore; \la^0, \la^1, \ldots, \la^{t-1})$ 
and
$$
(\la_\tcore;
\lambda^0,\lambda^1,\ldots,\lambda^{i-1},\lambda^{i}\cup\{\square_i\},\lambda^{i+1},\ldots,\lambda^{t-1})
$$ 
respectively. Write $b_j=b_j(\la_\tcore)\ (0\leq j \leq t-1)$.
Suppose $r$ is a given integer, $1\leq k\leq t-1$. Let
$x_{j,s}\ (0\leq s\leq m_j)$ be the contents of inner corners of
$\lambda^j$
and
$y_{j,s}\ (1\leq s\leq m_j)$ be the  contents of outer corners of
$\lambda^j$ for $0\leq j\leq t-1$.
We have
 \begin{multline*}
\sum\limits_{\substack{\square\in \lambda^+\\ h_{\square}\equiv0
(\modsymb t )}}h_{\square}^{2r} - \sum\limits_{\substack{\square\in
\lambda\\ h_{\square}\equiv0 (\modsymb t )}}h_{\square}^{2r} =\\ t^{2r}+
\sum\limits_{\substack{0\leq s \leq
    m_i}} \left(t(c_{\square_i}-x_{i,s})\right)^{2r}-
    \sum\limits_{\substack{1\leq s \leq
    m_i}} \left(t(c_{\square_i}-y_{i,s})\right)^{2r}
\end{multline*} and
\begin{align*}
\ &\sum\limits_{\substack{\square\in\lambda^+\\ h_\square\equiv
k (\modsymb t )}}h_\square^{2r}+\sum\limits_{\substack{\square\in\lambda^+\\
h_\square\equiv t-k (\modsymb t )}}h_\square^{2r} -
\sum\limits_{\substack{\square\in\lambda\\ h_\square\equiv
k (\modsymb t )}}h_\square^{2r}-\sum\limits_{\substack{\square\in\lambda\\
h_\square\equiv t-k (\modsymb t )}}h_\square^{2r}\\ &=
\sum\limits_{\substack{0\leq s \leq
    m_{i'}}} (tc_{\square_i}+b_i-tx_{i',s}-b_{i'})^{2r}-
    \sum\limits_{\substack{1\leq s \leq
    m_{i'}}} (tc_{\square_i}+b_i-ty_{i',s}-b_{i'})^{2r}
    \\ &+ \sum\limits_{\substack{0\leq
s \leq
    m_{i''}}} (tc_{\square_i}+b_i-tx_{i'',s}-b_{i''})^{2r}-
    \sum\limits_{\substack{1\leq s \leq
    m_{i''}}} (tc_{\square_i}+b_i-ty_{i'',s}-b_{i''})^{2r}
\end{align*} where $0\leq i', i''\leq
t-1$ satisfy $i'\equiv i+k (\modsymb t)$ and $i''\equiv
i-k (\modsymb t)$. Furthermore,
\begin{align*}
\sum_{\square\in \lambda^+}h_{\square}^{2r} - \sum_{\square\in
\lambda}h_{\square}^{2r} &=t^{2r} 
    +\sum_{j=0}^{t-1}\Bigl(\sum\limits_{\substack{0\leq s \leq
    m_j}} (tc_{\square_i}+b_i-tx_{j,s}-b_j)^{2r}
    \\ &-
    \sum\limits_{\substack{1\leq s \leq
    m_j}} (tc_{\square_i}+b_i-ty_{j,s}-b_j)^{2r}\Bigr).
\end{align*}
\end{lem}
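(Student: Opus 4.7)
The plan is to prove all three identities at the level of the canonical $01$-sequence $w(\la) = (a_j)_{j \in \mathbb{Z}}$ of $\la$, using two well-known facts: first, $\setH(\la)$ is the multiset $\{j' - j : j < j',\ a_j = 1,\ a_{j'} = 0\}$, so in particular the residue mod $t$ of a hook equals the difference of the residues of its two positions; and second, the Littlewood decomposition identifies the arithmetic subsequence $\{a_j : j \equiv b_{j'} \,(\modsymb t)\}$, reindexed by $(j - b_{j'})/t$, with the $01$-sequence of $\lambda^{j'}$, so that the contents $x_{j',s}$ of inner corners and $y_{j',s}$ of outer corners of $\lambda^{j'}$ correspond respectively to the $10$-descents and $01$-ascents in that subsequence. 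Under this identification, adding the single box $\square_i$ to $\lambda^i$ corresponds to swapping a $1$ at position $p = b_i + t c_{\square_i}$ with a $0$ at position $q = p + t$ of $w(\la)$, leaving every other residue class of $w(\la)$ untouched.

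For the first identity, I would observe that by property (P3) of the Littlewood decomposition, the multiset $\{h/t : h \in \setHt(\la)\}$ equals $\bigcup_j \setH(\lambda^j)$, so only $\setH(\lambda^i)$ changes, and the change is governed by the usual-partition analogue of Theorem~\ref{th:hanxiong3} (see \cite{hanxiong1}):
$$\setH(\lambda^i \cup \{\square_i\}) \cup \{c - y_{i,s} : 1 \leq s \leq m_i\} = \setH(\lambda^i) \cup \{1\} \cup \{c - x_{i,s} : 0 \leq s \leq m_i,\ x_{i,s} \neq c\},$$
with $c = c_{\square_i}$. Raising every term to the $2r$-th power, scaling every hook length by $t$, and noting that $(c - x_{i,s})^{2r} = 0$ whenever $x_{i,s} = c$, reproduces the first formula exactly.

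For the second identity I would work on $w(\la)$ directly. A hook $h \equiv k \,(\modsymb t)$ is a $10$-pair $(j, j')$ of $w(\la)$ whose endpoints satisfy $r(j') - r(j) \equiv k \,(\modsymb t)$, where $r(\cdot)$ denotes residue mod $t$. The swap $p \leftrightarrow q$ affects only those pairs with one endpoint in $\{p, q\}$, hence of residue $b_i$; restricting further to hooks of residue $k$ or $t - k$ forces the other endpoint to have residue $b_{i'}$ or $b_{i''}$, where $i' \equiv i + k$ and $i'' \equiv i - k \,(\modsymb t)$. Reading which $10$-pairs are created and which destroyed from the $01$- and $10$-descents of those two residue subsequences, and translating the affected positions back into corner contents of $\lambda^{i'}$ and $\lambda^{i''}$ via the offsets $tc_{\square_i} + b_i - b_{i'}$ and $tc_{\square_i} + b_i - b_{i''}$, yields exactly the four sums in the statement. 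No analogue of the $t^{2r}$ constant appears here because pairs with both endpoints of residue $b_i$ contribute only to the $k = 0$ case.

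The third identity is then a corollary: adding the first identity to one half of the sum of the second identity over $k = 1, \ldots, t-1$ produces $\sum_\square h_\square^{2r}|_{\la^+} - \sum_\square h_\square^{2r}|_\la$ on the left (each nonzero residue being hit twice by the pairings $\{k, t-k\}$), and collapses on the right into the single sum over $0 \leq j \leq t-1$ appearing in the statement. The main technical obstacle is the combinatorial bookkeeping inside the second identity: one must carefully identify which $10$-pairs survive the swap, convert the affected positions in residues $b_{i'}, b_{i''}$ into corner contents of $\lambda^{i'}, \lambda^{i''}$ with the correct offsets, and check that the degenerate case $i' = i''$ (occurring when $t$ is even and $k = t/2$) is handled consistently with both sides doubling.
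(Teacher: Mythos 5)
First, a point of reference: the paper does not prove this lemma at all --- it is imported verbatim as Lemma 5.4 of \cite{hanxiong2} and used as a black box --- so there is no internal proof to compare against. Your framework is the standard and correct one: hooks of $\la$ as pairs of positions in the canonical $01$-word, the Littlewood decomposition as the splitting of that word into $t$ arithmetic subsequences, and the addition of $\square_i$ to $\lambda^i$ as a transposition of two letters of $w(\la)$ at distance $t$ inside the residue class $b_i$. Your derivation of the first identity from property (P3) plus the one-box hook-multiset identity for ordinary partitions is correct (for $r\geq 1$; for $r=0$ one needs the convention $0^{0}=0$ at the inner corner with $x_{i,s}=c_{\square_i}$), and your deduction of the third identity by summing the second over $k=1,\dots,t-1$, halving, and adding the first is also correct.

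The gap is in the second identity, which is where essentially all the content of the lemma lives and which you defer as ``combinatorial bookkeeping.'' It is not mere bookkeeping. After the transposition at positions $p$ and $q=p+t$, the change in $\sum h^{2r}$ over hooks meeting the residue class $b_{i'}$ has the form $\sum_{j}\bigl(g_p(j)\,(j-p)^{2r}-g_q(j)\,(j-q)^{2r}\bigr)$ with $g_P(j)=\mathbf{1}[j<P,\ a_j=1]-\mathbf{1}[j>P,\ a_j=0]$, and the two summands are \emph{individually} not finite corner sums: already when $\lambda^{i'}=\emptyset$ the sum $\sum_j g_p(j)(j-p)^{2r}$ has a number of nonzero terms growing with $p$, and only the difference of the $p$- and $q$-contributions telescopes to something finite. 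One must then still prove the identity converting that finite difference into $\sum_{s}f(x_{i',s})-\sum_{s}f(y_{i',s})$ at the stated offsets $tc_{\square_i}+b_i-b_{i'}$ --- that is, the fact that the formal difference between the $0$-positions and the $1$-positions of a partition word, truncated between $p$ and $q$, equals the formal difference of inner- and outer-corner contents. That identity is the actual theorem here, and as written your argument assumes it rather than proves it. (A smaller point: your two conventions are mutually inconsistent --- if hooks are the pairs $j<j'$ with $a_j=1$, $a_{j'}=0$, then adding a box must turn a $0$ at $p$ and a $1$ at $q=p+t$ into a $1$ and a $0$, not the reverse, since the addition must create, not destroy, the new hook of length $t$ in $\setHt(\la)$.)
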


For the doubled distinct partition $\la$ 
whose image under Littlewood decomposition is $(\la_\tcore; \la^0, \la^1, \ldots, \la^{t-1})$ where $\la^0 = \sla^0 \sla^0$, let
$x_{0,s}\ (0\leq s\leq m_0)$ be the contents of inner corners of
$\sla^0$
and
$y_{0,s}\ (1\leq s\leq m_0)$ be the  contents of outer corners of
$\sla^0$.  Let
$x_{i,s}\ (0\leq s\leq m_i)$ be the contents of inner corners of
$\lambda^i$
and
$y_{i,s}\ (1\leq s\leq m_i)$ be the  contents of outer corners of
$\lambda^i$ for $1\leq i\leq t-1$. Then
$x_{i,s}=-x_{t-i,m_i-s}$ and $y_{i,s}=-y_{t-i, m_i+1-s}$ since $\lambda^i$ and $\lambda^{t-i}$ are conjugate to each other for $1\leq i \leq t-1$.
\begin{proof}[Proof of Lemma \ref{th:admissible}]
Let
$\lambda=(\sla_\tcore; \sla^0,\lambda^1,\ldots,\lambda^{t'})$ be a doubled distinct partition
and
$b_j = b_j(\sla_\tcore\sla_\tcore)$
for $0\leq j \leq t-1$.
The following statements are consequences of Lemma \ref{th:contentadd1}. 
\smallskip

$(C1)$ Let $1\leq i\leq t'$ and  
$
\lambda^+=(\sla_\tcore;
\sla^0,\lambda^1,\ldots,\lambda^{i-1},(\lambda^{i})^+,\lambda^{i+1},\ldots,\lambda^{t'})
$
be a doubled distinct partition
such that $(\lambda^{i})^+=\lambda^{i}\cup{\square_i}$,  we have
$$
\mathcal{C}(\lambda^+)\setminus \mathcal{C}(\lambda)= 
\{tc_{\square_i}+b_i-j:0\leq j\leq t-1\}\cup
\{-tc_{\square_i}+b_{t-i}-j:0\leq j\leq t-1\}.
$$

$(C2)$ Let
$
\lambda^+=(\sla_\tcore; (\sla^0)^+,\lambda^1,\ldots,\lambda^{t'})
$
be a doubled distinct partition such that $(\sla^0)^+=\sla^0\cup{\square_0}$,  we have
$$
\mathcal{C}(\lambda^+)\setminus \mathcal{C}(\lambda)= 
\{tc_{\square_0}+b_0-j:0\leq j\leq t-1\}\cup
\{t(1-c_{\square_0})+b_0-j:0\leq j\leq t-1\}.
$$
\noindent
Hence, $\sum_{c\in \setC(\lambda)}c^{r} $
is $\mu$-admissible for any nonnegative integer~$r$.

\medskip
On the other hand, we obtain the following results 
by Lemma \ref{th:hookdiff}.

$(H1)$ Let $1\leq i \leq t'$ and
$
\lambda^+=(\sla_\tcore;
\sla^0,\lambda^1,\ldots,\lambda^{i-1},(\lambda^{i})^+,\lambda^{i+1},\ldots,\lambda^{t'})
$
be a doubled distinct partition
such that
$(\lambda^{i})^+=\lambda^{i}\cup{\square_i}$, we have 

\begin{align*}
 & \sum_{\square\in \lambda^+}h_{\square}^{2r} - \sum_{\square\in
\lambda}h_{\square}^{2r} \\ &=t^{2r} +  (tc_{\square_i}+b_i-b_0)^{2r}\\&
    +\sum_{j=1}^{t-1}\Bigl(\sum\limits_{\substack{0\leq s \leq
    m_j}} (tc_{\square_i}+b_i-tx_{j,s}-b_j)^{2r}
    -
    \sum\limits_{\substack{1\leq s \leq
    m_j}} (tc_{\square_i}+b_i-ty_{j,s}-b_j)^{2r}\Bigr)
    \\&+
    \sum\limits_{\substack{1\leq s \leq
    m_j}} (tc_{\square_i}+b_i-tx_{0,s}-b_0)^{2r}
    -
    \sum\limits_{\substack{1\leq s \leq
    m_j}} (tc_{\square_i}+b_i-ty_{0,s}-b_0)^{2r}
    \\&+
    \sum\limits_{\substack{1\leq s \leq
    m_j}} (tc_{\square_i}+b_i-t(1-x_{0,s})-b_0)^{2r}
    -
    \sum\limits_{\substack{1\leq s \leq
    m_j}} (tc_{\square_i}+b_i-t(1-y_0,s)-b_0)^{2r} 
    \\ &+t^{2r} +  (-tc_{\square_i}+b_{t-i}-b_0)^{2r}\\&
    +\sum_{j=1}^{t-1}\Bigl(\sum\limits_{\substack{0\leq s \leq
    m_j}} (-tc_{\square_i}+b_{t-i}-tx_{j,s}-b_j)^{2r}
    -
    \sum\limits_{\substack{1\leq s \leq
    m_j}} (-tc_{\square_i}+b_{t-i}-ty_{j,s}-b_j)^{2r}\Bigr)
    \\&+
    \sum\limits_{\substack{1\leq s \leq
    m_j}} (-tc_{\square_i}+b_{t-i}-tx_{0,s}-b_0)^{2r}
    -
    \sum\limits_{\substack{1\leq s \leq
    m_j}} (-tc_{\square_i}+b_{t-i}-ty_{0,s}-b_0)^{2r}
    \\&+
    \sum\limits_{\substack{1\leq s \leq
    m_j}} (-tc_{\square_i}+b_{t-i}-t(1-x_{0,s})-b_0)^{2r}
    \\&-
    \sum\limits_{\substack{1\leq s \leq
    m_j}} (-tc_{\square_i}+b_{t-i}-t(1-y_0,s)-b_0)^{2r} 
     + (-tc_{\square_i}+b_{t-i}-t(c_{\square_i}+1)-b_0)^{2r}
    \\& + (-tc_{\square_i}+b_{t-i}-t(c_{\square_i}-1)-b_0)^{2r}
      -2  (-tc_{\square_i}+b_{t-i}-tc_{\square_i}-b_0)^{2r}.
\end{align*}

$(H2)$ Let
$
\lambda^+=(\sla_\tcore;
(\sla^0)^+,\lambda^1,\ldots,\lambda^{t'})
$
be a doubled distinct partition
such that $(\sla^0)^+=\sla^0\cup{\square_0}$,  we have
\begin{align*}
 & \sum_{\square\in \lambda^+}h_{\square}^{2r} - \sum_{\square\in
\lambda}h_{\square}^{2r} \\ &=t^{2r} +  (tc_{\square_0})^{2r}\\&
    +\sum_{j=1}^{t-1}\Bigl(\sum\limits_{\substack{0\leq s \leq
    m_j}} (tc_{\square_0}+b_0-tx_{j,s}-b_j)^{2r}
    -
    \sum\limits_{\substack{1\leq s \leq
    m_j}} (tc_{\square_0}+b_0-ty_{j,s}-b_j)^{2r}\Bigr)
    \\&+
    \sum\limits_{\substack{1\leq s \leq
    m_j}} (tc_{\square_0}-tx_{0,s})^{2r}
    -
    \sum\limits_{\substack{1\leq s \leq
    m_j}} (tc_{\square_0}-ty_{0,s})^{2r}
    \\&+
    \sum\limits_{\substack{1\leq s \leq
    m_j}} (tc_{\square_0}-t(1-x_{0,s}))^{2r}
    -
    \sum\limits_{\substack{1\leq s \leq
    m_j}} (tc_{\square_0}-t(1-y_0,s))^{2r} 
    \\ &+t^{2r} +  (t-tc_{\square_0})^{2r}\\&
    +\sum_{j=1}^{t-1}\Bigl(\sum\limits_{\substack{0\leq s \leq
    m_j}} (t-tc_{\square_0}+b_0-tx_{j,s}-b_j)^{2r}
    -
    \sum\limits_{\substack{1\leq s \leq
    m_j}} (t-tc_{\square_0}+b_0-ty_{j,s}-b_j)^{2r}\Bigr)
    \\&+
    \sum\limits_{\substack{1\leq s \leq
    m_j}} (t-tc_{\square_0}-tx_{0,s})^{2r}
    -
    \sum\limits_{\substack{1\leq s \leq
    m_j}} (t-tc_{\square_0}-ty_{0,s})^{2r}
    \\&+
    \sum\limits_{\substack{1\leq s \leq
    m_j}} (t-tc_{\square_0}-t(1-x_{0,s}))^{2r}
    -
    \sum\limits_{\substack{1\leq s \leq
    m_j}} (t-tc_{\square_0}-t(1-y_0,s))^{2r} 
    \\& + (t-tc_{\square_0}-t(c_{\square_0}+1))^{2r}
     + (t-tc_{\square_0}-t(c_{\square_0}-1))^{2r}
     \\& -2  (t-tc_{\square_0}-tc_{\square_0})^{2r}.
\end{align*}
Hence, $\sum_{h\in \setH(\lambda)}h^{2r} $
is $\mu$-admissible for any nonnegative integer~$r$.
\qedhere
\end{proof}


\subsection{Main results for doubled distinct partitions}\label{maindoubled distinct} 
To prove the doubled distinct case of Theorem \ref{th:intro},
we establish the following more general result.

\begin{thm} \label{th:main}
	Let $(\nu^0, \nu^1, \ldots, \nu^{t'})$ be a $(t'+1)$-tuple of usual partitions, and $\alpha$ be a $t$-core doubled distinct partition.
Suppose that $g_1, g_2, \ldots, g_v$ are $\alpha$-admissible functions of
doubled distinct partitions. 
Then, there exists some
$r\in \mathbb{N}$ such that
$$
D_t^r\Bigl(G_\lambda \prod_{u=1}^{v}g_u(\lambda)\Phi^{\nu^0}(\sla^0)\prod_{i=1}^{t'}\Psi^{\nu^i}(\lambda^i) \Bigr)=0
$$
for every doubled distinct partition $\lambda$ with $\lambda_\tcore=\alpha.$
Furthermore, let $\mu$ be a given doubled distinct partition. By Theorem \ref{th:main3}, 
\begin{equation}
\sum\limits_{\substack{\la\in\setDD,\  \lambda\geq_t\mu\\
|\lambda/\mu|=2nt }} F_{\lambda/\mu} G_\lambda\, \prod_{u=1}^{v}g_u(\lambda)
\end{equation}
is a polynomial in $n$. 
\end{thm}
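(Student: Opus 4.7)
The plan is to prove $D_t^r(G_\lambda F(\lambda))=0$ for some $r$ depending only on the input data, where
$$F(\lambda):=\prod_{u=1}^{v}g_u(\lambda)\,\Phi^{\nu^0}(\sla^0)\prod_{i=1}^{t'}\Psi^{\nu^i}(\lambda^i).$$
To each $\alpha$-admissible function $g_u$ one first attaches a complexity $d(g_u)$ recording both the maximum total size $|\tau^0|+\ell(\tau^0)+\sum_j(|\tau^j|+\ell(\tau^j))$ of exponents $(\tau^0,\ldots,\tau^{t'})$ appearing in the admissibility expansion of $g_u(\lambda^+)-g_u(\lambda)$ and the maximum degree of the associated content polynomial. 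I would then perform induction on
$$\rho(F):=\sum_{u=1}^{v}\bigl(1+d(g_u)\bigr)+|\nu^0|+\ell(\nu^0)+\sum_{i=1}^{t'}\bigl(|\nu^i|+\ell(\nu^i)\bigr).$$
The base case $\rho(F)=0$ forces $v=0$ and every $\nu^i=\emptyset$, so $F\equiv 1$ and $D_t(G_\lambda)=0$ by Lemma \ref{th:Glambda}; the polynomiality of $P(\mu,G_\lambda F;n)$ then follows immediately from Theorem \ref{th:main3}.

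For the inductive step, I would use Lemma \ref{th:Glambda} (which gives $\sum_{\lambda^+} G_{\lambda^+}=G_\lambda$) to rewrite
$$D_t\bigl(G_\lambda F(\lambda)\bigr)=\sum_{\substack{\lambda^+\geq_t\lambda\\ |\lambda^+/\lambda|=2t}}G_{\lambda^+}\bigl(F(\lambda^+)-F(\lambda)\bigr),$$
and expand $F(\lambda^+)-F(\lambda)$ by the standard product-rule inclusion-exclusion over the $v+t'+1$ factors (keeping only those terms in which a nonempty subset of factors is differenced). The sum over $\lambda^+$ splits according to which Littlewood component receives the added box: either $\sla^0$ gains a box $\square_0$, or some $\lambda^i$ ($1\le i\le t'$) gains a box $\square_i$. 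In the former case only $\Phi^{\nu^0}(\sla^0)$ and the $g_u$'s change; in the latter only $\Psi^{\nu^i}(\lambda^i)$ and the $g_u$'s change. Iterating Theorem \ref{th:diffq1} (together with its usual-partition analog from \cite{hanxiong1}) through the product rule writes the difference of $\Phi^{\nu^0}$ (resp.\ $\Psi^{\nu^i}$) as a polynomial in $\binom{c_{\square_0}}{2}$ (resp.\ $c_{\square_i}$) whose coefficients are linear combinations of $\Phi^{\tilde\nu^0}(\sla^0)$ (resp.\ $\Psi^{\tilde\nu^i}(\lambda^i)$) of strictly smaller index complexity. The differences of the $g_u$'s are, by the definition of $\alpha$-admissibility, polynomials in the same content variable whose coefficients are linear combinations of $\Phi^{\tau^0}(\sla^0)\prod_j\Psi^{\tau^j}(\lambda^j)$ bounded in total size by $d(g_u)$.

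The weighted summation over inner corners is then carried out via Lemma \ref{th:aibi} and its usual-partition counterpart: as already observed in the proof of Lemma \ref{th:Glambda}, the ratio $G_{\lambda^+}/G_\lambda$ is, up to the constants $1/t$ or $2/t$, precisely the Lagrange-type quotient $\prod_j(c_{\square_i}-\text{outer corner})/\prod_{j'\ne i}(c_{\square_i}-\text{other inner corner})$ to which those lemmas apply. Summing a polynomial of degree $k$ in $c_{\square_i}$ against this weight yields a linear combination of $\Psi^{\tilde\nu^i}(\lambda^i)$ with $|\tilde\nu^i|+\ell(\tilde\nu^i)\le k$. Assembling all cases shows that $D_t(G_\lambda F)$ is a finite linear combination of expressions $G_\lambda F'(\lambda)$ where $F'$ is again of the same shape (some $g_{u'}$'s times one $\Phi^{\tilde\nu^0}$ times $\Psi^{\tilde\nu^i}$'s) but satisfies $\rho(F')<\rho(F)$: either one $g_u$ has been eliminated and replaced by an $\alpha$-admissibility expansion of bounded complexity, or the $g_u$'s are untouched and the total size $\sum(|\nu^i|+\ell(\nu^i))$ has strictly decreased. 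The inductive hypothesis then closes the argument, and the polynomiality assertion follows from Theorem \ref{th:main3}.

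The main technical obstacle is the verification that $\rho$ strictly decreases in the first of these two cases. When a $g_u$ factor is differenced, new $\Phi,\Psi$ factors are injected, and one must check that their total contribution to $\sum(|\nu^i|+\ell(\nu^i))$ is strictly less than the weight $1+d(g_u)$ lost by removing $g_u$. This is exactly why the definition of $d(g_u)$ bundles together both the admissibility exponent bound and the corner-polynomial degree bound, and why the summand $1$ is included to guarantee a net unit of decrease. This multi-component argument generalizes the single-component computation in the proof of Theorem \ref{th:strictmain}, now simultaneously tracking the evolution of the strict part $\sla^0$ and the usual parts $\lambda^1,\ldots,\lambda^{t'}$ under the same operator $D_t$. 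Once the induction is complete, combining with Lemma \ref{th:admissible} yields the doubled distinct case of Theorem \ref{th:intro}.
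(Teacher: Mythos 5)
Your overall route is the same as the paper's: rewrite $D_t(G_\lambda F)$ using Lemma \ref{th:Glambda}, expand $F(\lambda^+)-F(\lambda)$ by the product rule, split the sum over $\lambda^+$ according to which Littlewood component receives the box, identify $G_{\lambda^+}/G_\lambda$ with the Lagrange-type quotients of Theorem \ref{th:hanxiong3} and Lemma 2.2 of \cite{hanxiong1}, and then absorb the resulting content polynomials via $\alpha$-admissibility, Theorem \ref{th:diffq1} and Lemma \ref{th:aibi}, concluding by induction and Theorem \ref{th:main3}. The gap is in your induction measure. You include the lengths $\ell(\nu^0)$ and $\ell(\nu^i)$ in $\rho$, but the tools you invoke control only sizes: Theorem \ref{th:diffq1} (and its usual-partition analogue) bounds the degree of the content polynomial, and Lemma \ref{th:aibi} converts a monomial of degree $k$ into a combination of $\Psi^{\tilde\nu}$ (resp.\ $\Phi^{\tilde\nu}$) subject only to $|\tilde\nu|\le k$; nothing rules out $\tilde\nu=(1,1,\ldots,1)$, for which $|\tilde\nu|+\ell(\tilde\nu)=2k$. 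Concretely, take $v=0$, $\nu^0=\emptyset$ and $\nu^1=(k)$: then $\rho=k+1$, while $D_t\bigl(G_\lambda\Psi^{k}(\lambda^1)\bigr)$ is a combination of $G_\lambda\Psi^{\tilde\nu}(\lambda^1)$ with $|\tilde\nu|\le k-2$, among which a term with $\tilde\nu=(1^{k-2})$ has $|\tilde\nu|+\ell(\tilde\nu)=2k-4>k+1$ once $k\ge 6$. So your assertion that, when the $g_u$'s are untouched, ``the total size $\sum(|\nu^i|+\ell(\nu^i))$ has strictly decreased'' is not justified, and the induction on $\rho$ as stated does not close. (The same phenomenon is why $d(g_u)$ would have to dominate the exponent data plus \emph{twice} the content-degree; that part is fixable by fiat since $d$ is your own definition, but for the $\Phi/\Psi$ factors there is no such freedom.)

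The repair is exactly the paper's bookkeeping: weight the $\Phi/\Psi$ indices by size only. Differencing a part $k$ of some $\nu^i$ ($i\ge1$) removes $k$ and, since $(x+1)^k+(x-1)^k-2x^k$ has degree $k-2$, returns index partitions of total size at most $k-2$; for $\nu^0$ the degree bound $k-1$ of Theorem \ref{th:diffq1} gives a drop of at least $1$. Terms in which some $g_u$ is differenced strictly decrease the number $v$ of admissible factors, so one inducts lexicographically on $\bigl(v,\,2|\nu^0|+\sum_{i}|\nu^i|\bigr)$, allowing the second coordinate to jump when $v$ drops. With that change of measure (and your observation, which is correct, that $\Delta B$ and $\Delta C$ cannot both be nonzero for the same added box) your argument becomes the paper's proof of Theorem \ref{th:main}.
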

\begin{proof}
   We will prove this claim by induction.
   Let
\begin{align*}
	A&= \prod_{u=1}^{v}g_u(\lambda),\qquad
	B= \prod_{i=1}^{t'}\Psi^{\nu^i}(\lambda^i),\qquad
    C= \Phi^{\nu^0}(\sla^0),
	\\
	\Delta A&= \prod_{u=1}^{v}g_u(\rho)-\prod_{u=1}^{v}g_u(\lambda)= \sum\limits_{(*)}\prod_{s\in U}g_s(\lambda)
{\prod_{s'\in V} \bigl(g_{s'}(\rho)-g_{s'}(\lambda)\bigr)},\\
	\Delta B&= 
\prod_{i=1}^{t'}\Psi^{\nu^i}(\rho^i)-\prod_{i=1}^{t'}\Psi^{\nu^i}(\la^i)\\&
= \sum\limits_{(**)}\prod_{s\in U}\Psi^{\nu^s}(\la^i)
{\prod_{s'\in V} \bigl(\Psi^{\nu^{s'}}(\rho^{i})-\Psi^{\nu^{s'}}(\la^i)\bigr)},\\
	\Delta C&= 
\Phi^{\nu^0}({\bar \rho^0})-\Phi^{\nu^0}(\sla^0),
\end{align*}
where the sum $(*)$ (resp. $(**)$) ranges over all
pairs $(U,V)$ of positive integer sets such that 
$U\cup V=\{1,2,\ldots,v\}$
(resp. $U\cup V=\{1,2,\ldots,t'\}$), 
$U\cap V=\emptyset$ and $V\neq \emptyset$.
    We have
\begin{align*}
&D_t\Bigl(G_\lambda \prod_{u=1}^{v}g_u(\lambda)\Phi^{\nu^0}(\sla^0)\prod_{i=1}^{t'}\Psi^{\nu^i}(\lambda^i) \Bigl)  \\ 
=\ &G_\lambda\sum\limits_{\substack{\rho\geq_t\lambda\\
|\rho/\lambda|=2t}}\frac{G_{\rho}}{G_\lambda} \Bigl(
\prod_{u=1}^{v}g_u(\rho)\Phi^{\nu^0}({\bar\rho}^0)\prod_{i=1}^{t'}\Psi^{\nu^i}(\rho^i)\\ 
 &\quad -
\prod_{u=1}^{v}g_u(\lambda)\Phi^{\nu^0}(\sla^0)\prod_{i=1}^{t'}\Psi^{\nu^i}(\lambda^i)
\Bigr)\\
=\ &G_\lambda\sum\limits_{\substack{\rho\geq_t\lambda\\
|\rho/\lambda|=2t}}\frac{G_{\rho}}{G_\lambda} \bigl( 
 \Delta A\cdot   B \cdot  C
+ A\cdot  \Delta B \cdot  C
+A\cdot   B \cdot \Delta C
 \\
&\quad 
+ A\cdot \Delta  B \cdot \Delta C
+ \Delta A\cdot   B \cdot \Delta C 
+  \Delta A\cdot  \Delta B \cdot  C  
+ \Delta A\cdot  \Delta B \cdot \Delta C 
\bigr).
\end{align*}
For the first term in the above summation,
 we obtain
\begin{align*}	&G_\lambda\sum\limits_{\substack{\rho\geq_t\lambda\\
|\rho/\lambda|=2t}}\frac{G_{\rho}}{G_\lambda} \bigl(
\Delta A\cdot    B \cdot C
\bigr)\\
&= \frac1t G_{\lambda}
\Phi^{\nu^0}(\sla^0)\prod_{i=1}^{t'}\Psi^{\nu^i}(\lambda^i) \sum_{0\leq i\leq m_0} \frac{\prod\limits_{\substack{1\leq
j\leq m_0}}\bigl({\binom{x_{0,i}}{2}}-{\binom{y_{0,j}}{2}}\bigr)}{\prod\limits_{\substack{0\leq j\leq m_0\\
j\neq i}}\bigl({\binom{x_{0,i}}{2}}-{\binom{x_{0,j}}{2}}\bigr)}\\
&\qquad\times
\sum\limits_{(*)}\prod_{s\in U}g_s(\lambda)
{\prod_{s'\in V} \bigl(g_{s'}((\smu;
(\sla^0)^{i+},\lambda^1,\ldots,\lambda^{t'}))-g_{s'}(\lambda)\bigr)}\\
&+ 
\frac2t G_{\lambda} \Phi^{\nu^0}(\sla^0)\prod_{i=1}^{t'}\Psi^{\nu^i}(\lambda^i) 
\sum_{k=1}^{t'} 
\sum_{0\leq i\leq m_k} \frac{\prod\limits_{\substack{1\leq
j\leq m_k}}(x_{k,i}-y_{k,j})}{\prod\limits_{\substack{0\leq j\leq m_k\\
j\neq i}}(x_{k,i}-x_{k,j})}\\
&\qquad \times
\sum\limits_{(*)}\prod_{s\in U}g_s(\lambda)
{\prod_{s'\in V} \bigl(g_{s'}((\smu;
\sla^0,\lambda^1,\ldots,({\lambda^{k}})^{i+},\ldots,\lambda^{t'}))-g_{s'}(\lambda)\bigr)}
\end{align*}
where $\setC((\sla^0)^{i+})\setminus \setC(\sla^0)=x_{0,i}$ and $\setC(({\lambda^{k}})^{i+})\setminus \setC(\lambda^{k})=x_{k,i}.$
Since $g_1, g_2, \ldots, g_v$ are $\alpha$-admissible functions and thanks to 
Lemma \ref{th:aibi},
$$G_\lambda\sum\limits_{\substack{\rho\geq_t\lambda\\
|\rho/\lambda|=2t}}\frac{G_{\rho}}{G_\lambda} \bigl(
\Delta A\cdot    B \cdot C
\bigr)$$  could
be written as a linear combination of some 
$$G_\lambda \prod_{\underline u=1}^{{\underline v}}
g_{\underline u}( \lambda)
\Phi^{{\underline \nu}^0}({\bar\lambda}^0)\prod_{i=1}^{t'}\Psi^{{\underline \nu}^i}({ \lambda}^i)$$
where either $\underline v<v$, or $\underline v=v$ and simultaneously  
$$2|{\underline \nu}^0|+\sum_{i=1}^{t'}|{\underline \nu}^i|\leq 
2|{\nu}^0| + \sum_{i=1}^{t'}|{\nu}^i|-2.$$ 
In the other hand,
we have similar results
for other six terms by Lemmas  \ref{th:XY}, \ref{th:aibi} and Theorem 
\ref{th:diffq1}. 
Thus, Theorem \ref{th:main} is proved by induction on $(v, 2|{\nu}^0| + \sum_{i=1}^{t'}|{\nu}^i|)$.
\end{proof}

As an application of Theorem \ref{th:main},
we derive the doubled distinct case of Theorem~\ref{th:intro} from Lemma \ref{th:admissible}.
Actually, by a similar but more precise argument as in the proof of Lemma \ref{th:admissible}, we can show that 
$$
\sum\limits_{\substack{\square\in \lambda\\
h_{\square}\equiv \pm j (\modsymb t )}}h_{\square}^{2r}
\quad
\text{and}
\quad
\sum\limits_{\substack{\square\in \lambda\\
c_{\square}\equiv j  (\modsymb t )}}c_{\square}^{r}
$$ 
are $\mu$-admissible for 
any $t$-core doubled distinct partition $\mu$,  any nonnegative integer~$r$, and $0\leq j\leq t-1$.
By Theorem \ref{th:main} we derive the following result.

\begin{cor} \label{th:main''}
Let $u',v', j_u,j'_v, k_u,k'_v$ be nonnegative integers and $\alpha$ be a given $t$-core doubled distinct
partition. Then, there exists some $r\in \mathbb{N}$ such that
$$
D_t^r\Biggl(G_\lambda \Biggl(\prod_{u=1}^{u'}\sum\limits_{\substack{\square\in \lambda\\
h_{\square}\equiv \pm j_u (\modsymb t
)}}h_{\square}^{2k_u}\Biggr)\Biggl(\prod_{v=1}^{v'}\sum\limits_{\substack{\square\in \lambda\\
c_{\square}\equiv j'_v (\modsymb t )}}c_{\square}^{k'_v}\Biggr) \Biggr)=0
$$
for every doubled distinct partition $\lambda$ with $\lambda_\tcore=\alpha.$
Moreover, let $\mu$ be a given doubled distinct partition.
\begin{equation*}
\sum\limits_{\substack{\la\in\setDD,\  \lambda\geq_t\mu\\
|\lambda/\mu|=2nt }} F_{\lambda/\mu} G_\lambda\,
\Biggl(\prod_{u=1}^{u'}\sum\limits_{\substack{\square\in \lambda\\
h_{\square}\equiv \pm j_u  (\modsymb t
)}}h_{\square}^{2k_u}\Biggr)\Biggl(\prod_{v=1}^{v'}\sum\limits_{\substack{\square\in \lambda\\
c_{\square}\equiv j'_v (\modsymb t )}}c_{\square}^{k'_v}\Biggr)
\end{equation*}
is a polynomial in $n$ of degree at most $\sum_{u=1}^{u'}(k_u+1)+\sum_{v=1}^{v'}\frac{k'_v+2}{2}$.
\end{cor}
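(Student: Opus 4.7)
The plan is to derive the corollary as a specialization of Theorem~\ref{th:main}. For each residue $j$ and each nonnegative integer $k$, I introduce the functions
$$g^H_{j,k}(\lambda):= \sum\limits_{\substack{\square\in \lambda\\ h_{\square}\equiv \pm j\ (\modsymb t)}} h_\square^{2k},\qquad g^C_{j,k}(\lambda):=\sum\limits_{\substack{\square\in \lambda\\ c_{\square}\equiv j\ (\modsymb t)}} c_\square^k.$$
The first and most concrete step is to upgrade Lemma~\ref{th:admissible} by showing that each $g^H_{j,k}$ and each $g^C_{j,k}$ is $\alpha$-admissible. This is exactly what is asserted (but not spelled out) in the paragraph preceding the corollary, and the argument is a refinement of the proof of Lemma~\ref{th:admissible}: in each of the arithmetic progressions $\{tc_{\square_i}+b_i-j:0\le j\le t-1\}$ (and its counterpart) appearing in (C1)--(C2) and in Lemma~\ref{th:hookdiff}, one retains only the single term of the prescribed residue class. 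The $\pm j$ bundling on the hook side is dictated by the pairing of residues $k$ and $t-k$ already visible in Lemma~\ref{th:hookdiff}, which is itself forced by the symmetry $\lambda^i=(\lambda^{t-i})'$ of the $t$-quotient of a doubled distinct partition. For the component $i=0$, the surviving polynomial in $c_{\square_0}$ is then re-expressed as a polynomial in $\binom{c_{\square_0}}{2}$ via the $c\leftrightarrow 1-c$ symmetry of $\setC(\sla\sla)$ recorded in~\eqref{eq:Clala}.

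With admissibility in place, Theorem~\ref{th:main}, applied with the family $\{g^H_{j_u,k_u}\}_u\cup \{g^C_{j'_v,k'_v}\}_v$ and with $\nu^0=\nu^1=\cdots=\nu^{t'}=\emptyset$, immediately supplies an $r\in\mathbb{N}$ such that
$$D_t^r\Bigl(G_\lambda \prod_{u=1}^{u'} g^H_{j_u,k_u}(\lambda)\prod_{v=1}^{v'} g^C_{j'_v,k'_v}(\lambda)\Bigr)=0$$
for every doubled distinct partition $\lambda$ with $\lambda_\tcore=\alpha$. The polynomiality in $n$ of the stated summation, and a bound of $r-1$ on its degree, then follow from the final assertion of Theorem~\ref{th:main3}.

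The main obstacle is the sharp degree bound $\sum_{u=1}^{u'}(k_u+1)+\sum_{v=1}^{v'}(k'_v+2)/2$, which does not come for free from the qualitative induction in Theorem~\ref{th:main}. The plan is to redo that induction while tracking the weighted complexity $2|\nu^0|+\sum_{i=1}^{t'}|\nu^i|$ quantitatively. By the admissibility analysis, consuming one factor $g^H_{j_u,k_u}$ through the $\Delta A$ branch introduces $\Phi^\tau$'s and $\Psi^\tau$'s contributing at most $2k_u$ to this weighted complexity, whereas consuming one $g^C_{j'_v,k'_v}$ contributes at most $k'_v$. Since each subsequent $D_t$ in a $\Delta B$ or $\Delta C$ branch strictly decreases the weighted complexity by $2$ (via Lemma~\ref{th:aibi} and Theorem~\ref{th:diffq1}), the total number of $D_t$-applications needed to eliminate a single $g^H_{j_u,k_u}$ is at most $1+k_u=k_u+1$, and that for a single $g^C_{j'_v,k'_v}$ is at most $1+k'_v/2=(k'_v+2)/2$. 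Summing these per-factor costs and inserting the result into the degree estimate of Theorem~\ref{th:main3} yields the stated bound.
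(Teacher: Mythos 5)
Your proposal is correct and follows essentially the same route as the paper: the restricted hook- and content-power sums are shown to be $\mu$-admissible by refining the proof of Lemma \ref{th:admissible} (keeping only the terms of the prescribed residue, with the $\pm j$ bundling forced by the $k,t-k$ pairing in Lemma \ref{th:hookdiff} and the $c\leftrightarrow 1-c$ symmetry handling the $\sla^0$ component), after which Theorem \ref{th:main} gives the vanishing of $D_t^r$ and Theorem \ref{th:main3} gives polynomiality. Your quantitative bookkeeping for the degree bound (cost $k_u+1$ per hook factor, $(k'_v+2)/2$ per content factor, with the weighted complexity $2|\nu^0|+\sum_i|\nu^i|$ dropping by $2$ per application) is exactly the tracking the paper leaves implicit, and its conclusions check out.
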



\goodbreak

\section{Polynomiality for self-conjugate partitions}\label{mainSC} 

In this section we always set that $t=2t'$ is an even integer.  
The set of all $t$-core self-conjugate partitions is denoted by $\setSC_\tcore$. 
Let $\la$ be a self-conjugate partition. By \cite{stanton}, the Littlewood decomposition maps 
$\la$ to
$$(\la_{\tcore}; \lambda^0, \lambda^1,\ldots, \lambda^{t-1})\in 
\setSC_\tcore\times  \setP^{2t'}$$ 
where $\lambda^i$ is the conjugate partition of $\lambda^{t-1-i}$ 
for $0\leq i\leq t'-1$.
 For convenience, we always write
$$\lambda=(\lambda_\tcore;
\lambda^0,\ldots,\lambda^{t'-1}).$$
Let
$\lambda=(\lambda_\tcore; \lambda^0,\ldots,\lambda^{t'-1})$
and
$\mu=(\mu_\tcore; \mu^0,\ldots,\mu^{t'-1})$
be 
two self-conjugate partitions.
If  $\lambda_\tcore=\mu_\tcore$ and $\lambda^i\supset \mu^i$ for $0\leq i\leq t'-1$, 
we write 
$\lambda\geq_t \mu$  and define
\begin{equation}
F_{\mu/\mu}:=1
\qquad \text{and}\qquad F_{\lambda/\mu}:=\sum\limits_{\substack{\lambda\geq_t\lambda^-\geq_t\mu\\
|\lambda/\lambda^-|=2t }}F_{\lambda^-/\mu}\qquad \text{(for $\lambda \neq \mu$)}.
\end{equation}
Then
$F_{\lambda/\mu}$
is the number of vectors 
$(P_0,P_1,\ldots,P_{t'-1})$
such that

(1) $P_i$ $(0\leq i\leq t'-1)$  {is a skew
Young tableau of shape}  $\lambda^i/\mu^i$, 

(2) {the union of
entries in} $P_0,P_1,\ldots,P_{t'}$ \emph{are} $1,\ 2,\ldots,\ 
n=\sum_{i=0}^{t'-1}|\lambda^i/\mu^i|$.

\noindent
Hence,
$$F_{\lambda/\mu}=\binom{\sum_{i=0}^{t'-1}|\lambda^i/\mu^i|}
{|\lambda^0/\mu^0|,\ldots,|\lambda^{t'}/\mu^{t'}|}\prod_{i=0}^{t'-1}
 f_{\lambda^i/\mu^i}.$$
We set
\begin{equation}
F_\lambda:=
F_{\lambda/ \lambda_{\tcore}}=
\binom{\sum_{i=0}^{t'-1}|\lambda^i|}
{|\lambda^0|,\ldots,|\lambda^{t'-1}|}\prod_{i=0}^{t'-1}
 f_{\lambda^i}
 =\frac {n!}{\prod_{i=0}^{t'-1} H(\la^i)}
 \end{equation}
 and
$$G_{\lambda}:=\frac {2^{n}}{t^n\prod_{i=0}^{t'-1} H(\lambda^i)}
=\frac {2^n F_\la}{t^n n!}.
$$

Let $g$ be a function of self-conjugate partitions and $\lambda$ be a self-conjugate partition.
The \emph{$t$-difference operator~$D_t$} for self-conjugate partitions is defined  by
\begin{equation}\label{eq:diffDtdoubled distinct}  D_tg(\lambda)=\sum\limits_{\substack{\lambda^{+}\geq_t\lambda\\ |\lambda^{+}/\lambda|=2t}}g(\lambda^{+})-g(\lambda).
\end{equation}
The higher-order $t$-difference operators $D_t^k$ are defined by induction:
$$D_t^0g:=g \text{\quad and\quad }\ D_t^k g:=D_t(D_t^{k-1} g)\quad (k\geq 1).$$

\begin{lem} \label{th:SCGlambda}
Suppose that $\lambda$ is a self-conjugate partition. Then
$
D_t (G_\lambda)=0
$.
In other words,
\begin{equation}\label{eq:Gsc}
G_{\lambda}=\sum\limits_{\substack{\lambda^+\geq_t\lambda\\
|\lambda^+/\lambda|=2t }}G_{\lambda^+}.
\end{equation}

\end{lem}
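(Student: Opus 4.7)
The plan is to mimic the argument used for Lemma \ref{th:Glambda} (the doubled distinct case), exploiting the Littlewood decomposition for self-conjugate partitions, but with the simplification that every component $\lambda^i$ of the $t$-quotient is now a usual partition (there is no strict partition component to handle separately).

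First I would unfold the definition of $\geq_t$. If $\lambda=(\lambda_\tcore;\lambda^0,\ldots,\lambda^{t'-1})$ is self-conjugate and $\lambda^+\geq_t\lambda$ with $|\lambda^+/\lambda|=2t$, then $\lambda^+$ is again self-conjugate and its $t$-quotient differs from that of $\lambda$ in exactly one component: for some $i\in\{0,1,\ldots,t'-1\}$ we have $(\lambda^i)^+ = \lambda^i \cup \{\square_i\}$, and by the constraint $\lambda^{t-1-i}=(\lambda^i)'$ the corresponding conjugate box is automatically added to $\lambda^{t-1-i}$. Under the normalisation $n=\sum_{j=0}^{t'-1}|\lambda^j|$ used in the definition of $G_\lambda$, the value of $n$ increases by exactly $1$.

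Next I would compute the ratio explicitly from the definition
$$G_\lambda=\frac{2^n}{t^n\prod_{j=0}^{t'-1}H(\lambda^j)}.$$
For a fixed $i$ and a fixed addable box of $\lambda^i$, only the $i$-th factor in the denominator changes, so
$$\frac{G_{\lambda^+}}{G_\lambda}=\frac{2}{t}\cdot\frac{H(\lambda^i)}{H((\lambda^i)^+)}.$$
Summing over all addable boxes of $\lambda^i$ and then over $i$ gives
$$\sum_{\substack{\lambda^+\geq_t\lambda\\ |\lambda^+/\lambda|=2t}}\frac{G_{\lambda^+}}{G_\lambda}
=\frac{2}{t}\sum_{i=0}^{t'-1}\sum_{|(\lambda^i)^+/\lambda^i|=1}\frac{H(\lambda^i)}{H((\lambda^i)^+)}.$$

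The last ingredient is the standard identity $\sum_{|\lambda^+/\lambda|=1}H(\lambda)/H(\lambda^+)=1$ for usual partitions (this is Lemma~2.2 of \cite{hanxiong1}, which was already used in the proof of Lemma \ref{th:Glambda}). Applying it to each of the $t'$ inner sums collapses the expression to $\frac{2}{t}\cdot t' = 1$, which is precisely \eqref{eq:Gsc}. I do not foresee any real obstacle: the only subtlety is checking that the self-conjugacy constraint forces addable moves in the $t$-quotient to come in conjugate pairs, which is why only the indices $i=0,\ldots,t'-1$ contribute and why $|\lambda^+/\lambda|=2t$ corresponds to a single box added in one component $\lambda^i$.
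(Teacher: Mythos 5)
Your proposal is correct and follows essentially the same route as the paper: reduce via the Littlewood decomposition to a single box added in one quotient component $\lambda^i$ ($0\leq i\leq t'-1$), compute $G_{\lambda^+}/G_\lambda=\frac{2}{t}\cdot\frac{H(\lambda^i)}{H((\lambda^i)^+)}$, and apply Lemma 2.2 of \cite{hanxiong1} to each inner sum so the total collapses to $t'\cdot\frac{2}{t}=1$. The paper's proof is just a terser version of exactly this computation.
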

\begin{proof}
Write 
$\lambda=(\lambda_\tcore; \lambda^0,\ldots,\lambda^{t'-1})$.
For $0\leq i\leq t'-1$ we obtain
$$
\sum_{|(\lambda^{i})^+/\lambda^{i}|=1}
\frac{G_{(\lambda_\tcore;
\lambda^0,\ldots,\lambda^{i-1},(\lambda^{i})^+,\lambda^{i+1},\ldots,\lambda^{t'-1})}}{G_\lambda}
=
\sum_{|(\lambda^{i})^+/\lambda^{i}|=1}
\frac{2H(\lambda^{i})}{tH((\lambda^{i})^+)}=\frac{2}{t}
$$
by Lemma 2.2 in \cite{hanxiong1}. 
Summing the above equalities we prove \eqref{eq:Gsc}.
\end{proof}
 
By analogy with the results on doubled distinct partitions, we have the following theorems for self-conjugate partitions. Their proofs are omitted.

\begin{lem} \label{th:SCtelescope}
Suppose that $\mu$ is a  given self-conjugate partition and $g$ is a function    of self-conjugate
 partitions. For every nonnegative  integer $n$, let
$$
P(\mu,g; n):=\sum\limits_{\substack{\la\in\setSC,\ \lambda\geq_t\mu\\
|\lambda/\mu|=2nt }}F_{\lambda/\mu}g(\lambda).
$$
Then 
$$
P(\mu,g; n+1)-P(\mu, g; n)=P(\mu, {D_tg}; n).
$$
\end{lem}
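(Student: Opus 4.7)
The plan is to imitate the proof of Lemma \ref{th:telescope} essentially verbatim, since the definitions of $F_{\lambda/\mu}$, of the partial order $\geq_t$, and of the $t$-difference operator $D_t$ in the self-conjugate setting have been set up in direct parallel with the doubled distinct case. In particular, the telescoping identity only uses the structural recursion
\[
F_{\lambda/\mu}=\sum\limits_{\substack{\lambda\geq_t\lambda^-\geq_t\mu\\ |\lambda/\lambda^-|=2t }}F_{\lambda^-/\mu}
\qquad (\lambda\neq\mu),
\]
together with the definition
\[
D_t g(\lambda)=\sum\limits_{\substack{\lambda^{+}\geq_t\lambda\\ |\lambda^{+}/\lambda|=2t}}g(\lambda^{+})-g(\lambda),
\]
both of which are available here, as well as the obvious fact that every $\nu\geq_t\mu$ with $|\nu/\mu|=2(n+1)t$ admits at least one intermediate $\nu^-$ with $\mu\leq_t\nu^-\leq_t\nu$ and $|\nu/\nu^-|=2t$.

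First, I would expand $P(\mu,g;n+1)$ by substituting the recursion for $F_{\nu/\mu}$, which produces a double sum over pairs $(\nu^-,\nu)$ with $\mu\leq_t\nu^-\leq_t\nu$, $|\nu^-/\mu|=2nt$ and $|\nu/\nu^-|=2t$. Second, I would swap the order of summation, renaming $\nu^-=\lambda$ and $\nu=\lambda^+$, to obtain
\[
P(\mu,g;n+1)=\sum\limits_{\substack{\lambda\in\setSC,\ \lambda\geq_t\mu\\ |\lambda/\mu|=2nt}}F_{\lambda/\mu}\sum\limits_{\substack{\lambda^{+}\geq_t\lambda\\ |\lambda^{+}/\lambda|=2t}}g(\lambda^{+}).
\]
Third, subtracting $P(\mu,g;n)$ from both sides and recognising the inner difference as $D_t g(\lambda)$ yields
\[
P(\mu,g;n+1)-P(\mu,g;n)=\sum\limits_{\substack{\lambda\in\setSC,\ \lambda\geq_t\mu\\ |\lambda/\mu|=2nt}}F_{\lambda/\mu}\,D_t g(\lambda)=P(\mu,D_t g;n),
\]
which is precisely the claim.

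There is no real obstacle here, only a bookkeeping check: one has to make sure the exchange of summation is valid, i.e.\ that every $\nu\geq_t\mu$ with $|\nu/\mu|=2(n+1)t$ is reached exactly once in the swapped double sum and carries the correct weight. This is immediate from the recursive definition of $F_{\lambda/\mu}$, which sums over all ways to peel off a single $2t$-block, so the total contribution to a fixed $\nu$ amounts to $F_{\nu/\mu}$ itself. The argument is formally identical to the one given for Lemma~\ref{th:telescope}, with the only difference being that the skew tableau interpretation of $F_{\lambda/\mu}$ now involves a $t'$-tuple $(P_0,\dots,P_{t'-1})$ of ordinary skew Young tableaux rather than a $(t'+1)$-tuple including a shifted one; this is purely cosmetic and plays no role in the telescoping step.
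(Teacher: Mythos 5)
Your proof is correct and follows exactly the route the paper intends: it omits the proof of this lemma, noting it is "by analogy" with Lemma \ref{th:telescope}, whose proof is precisely the expansion of $F_{\nu/\mu}$ via its defining recursion followed by the interchange of summation that you carry out. Nothing further is needed.
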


{\it Example}. Let  $g(\lambda)=G_\lambda$. Then $D_tg(\lambda)=0$
by Lemma \ref{th:SCGlambda}, which means that 
\begin{equation}
\sum\limits_{\substack{\la\in\setSC,\  \lambda\geq_t\mu\\
|\lambda/\mu|=2nt }}F_{\lambda/\mu} G_\lambda= G_\mu.
\end{equation}
When $\mu=\emptyset$, the above identity becomes
\begin{equation}
\sum\limits_{\substack{\lambda \in \mathcal{SC}, \ 
|\lambda|=2nt \\ \lambda_\tcore= \emptyset } } \frac{(2t)^n n!}{\prod_{h\in\mathcal{H}_t(\lambda)}h}\,=1.
\end{equation}

\medskip

\begin{thm}\label{th:SCmain3}
Let $g$ be a function of  self-conjugate partitions and $\mu$ be a given self-conjugate partition.
Then, 
\begin{equation}\label{eq:main3}
P(\mu, g; n)=\sum\limits_{\substack{\la\in\setSC,\ \lambda\geq_t\mu\\
|\lambda/\mu|=2nt }}F_{\lambda/\mu}g(\lambda)
=\sum_{k=0}^n\binom{n}{k}D_t^kg(\mu)
\end{equation}
and
\begin{equation}\label{eq:main3b}
D_t^ng(\mu)=\sum_{k=0}^n(-1)^{n+k}\binom{n}{k}P(\mu, g; k).
\end{equation}
In particular, if there exists some positive integer $r$ such that
$D_t^r g(\lambda)=0$ for every self-conjugate partition $\lambda\geq_t \mu$, then $P(\mu, g; n)$ is
a polynomial in $n$ of degree at most $r-1$.
\end{thm}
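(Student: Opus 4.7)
The plan is to mimic exactly the argument used for Theorem~\ref{th:main3} in the doubled distinct case, with all the structural ingredients (the telescoping lemma and the vanishing of $D_t(G_\lambda)$) already in place for self-conjugate partitions via Lemmas~\ref{th:SCGlambda} and~\ref{th:SCtelescope}. Identity \eqref{eq:main3} is proved by induction on $n$. The base case $n=0$ reduces to $P(\mu,g;0)=F_{\mu/\mu}g(\mu)=g(\mu)=\binom{0}{0}D_t^0g(\mu)$, which is immediate.

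For the inductive step, assume \eqref{eq:main3} holds for $n$. Apply Lemma~\ref{th:SCtelescope} with the function $g$ to obtain $P(\mu,g;n+1)=P(\mu,g;n)+P(\mu,D_tg;n)$. Using the induction hypothesis on $g$ and on $D_tg$ (the latter is also a function of self-conjugate partitions, so the hypothesis applies), this equals
\begin{equation*}
\sum_{k=0}^{n}\binom{n}{k}D_t^kg(\mu)+\sum_{k=0}^{n}\binom{n}{k}D_t^{k+1}g(\mu)
=\sum_{k=0}^{n+1}\binom{n+1}{k}D_t^kg(\mu),
\end{equation*}
using Pascal's identity, which is exactly \eqref{eq:main3} at level $n+1$.

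Identity \eqref{eq:main3b} is then formally the inverse relation to \eqref{eq:main3}. Since \eqref{eq:main3} expresses $P(\mu,g;n)$ as $\sum_{k}\binom{n}{k}D_t^kg(\mu)$, the standard binomial M\"obius inversion (cf.\ \cite{Rota1964}) yields $D_t^ng(\mu)=\sum_{k=0}^{n}(-1)^{n+k}\binom{n}{k}P(\mu,g;k)$.

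Finally, if $D_t^r g(\lambda)=0$ for every self-conjugate partition $\lambda\geq_t\mu$, then in particular $D_t^kg(\mu)=0$ for all $k\geq r$, so the finite sum $\sum_{k=0}^n\binom{n}{k}D_t^kg(\mu)=\sum_{k=0}^{\min(n,r-1)}\binom{n}{k}D_t^kg(\mu)$ is a polynomial in $n$ of degree at most $r-1$, since each $\binom{n}{k}$ is a polynomial in $n$ of degree $k\leq r-1$. There is no genuine obstacle here: every ingredient (the telescoping recurrence, the role of $G_\lambda$, the M\"obius inversion) has already been developed in parallel for self-conjugate partitions in Section~\ref{mainSC}, and the argument is a verbatim transcription of the doubled distinct proof of Theorem~\ref{th:main3}.
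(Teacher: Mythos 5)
Your proof is correct and is exactly the argument the paper intends: the paper omits the proof of Theorem~\ref{th:SCmain3}, stating it follows by analogy with Theorem~\ref{th:main3}, whose proof is precisely your induction via the telescoping lemma (here Lemma~\ref{th:SCtelescope}) plus binomial M\"obius inversion. Your added remarks on applying the induction hypothesis to $D_tg$ and on the degree bound for the polynomial are accurate fillings-in of details the paper leaves implicit.
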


\begin{thm} \label{th:mainSC} Let $t=2t'$ be a given integer, $\alpha$ be a given $t$-core self-conjugate
partition, and $u',v', j_u,{j'}_v, k_u,{k'}_v$ be nonnegative integers.
Then there exists some
$r\in \mathbb{N}$ such that
$$
D_t^r\left(\prod_{u=1}^{u'}\sum\limits_{\substack{\square\in \lambda\\
h_{\square}\equiv \pm j_u  (\modsymb t
)}}h_{\square}^{2k_u}\Biggr)\Biggl(\prod_{v=1}^{v'}\sum\limits_{\substack{\square\in \lambda\\
c_{\square}\equiv j'_v (\modsymb t )}}c_{\square}^{k'_v} \right)=0
$$
for every self-conjugate partition $\lambda$ with $\lambda_\tcore=\alpha$.  
Furthermore,  let $\mu$ be a given self-conjugate
partition. Then  by Theorem \ref{th:SCmain3}, we have

\begin{equation*}
\sum\limits_{\substack{\la\in\setSC,\ \lambda\geq_t  \mu\\
|\lambda/\mu|=2nt }} F_{\lambda/\mu} G_\lambda\,
\Biggl(\prod_{u=1}^{u'}\sum\limits_{\substack{\square\in \lambda\\
h_{\square}\equiv \pm j_u  (\modsymb t
)}}h_{\square}^{2k_u}\Biggr)\Biggl(\prod_{v=1}^{v'}\sum\limits_{\substack{\square\in \lambda\\
c_{\square}\equiv j'_v (\modsymb t )}}c_{\square}^{k'_v}\Biggr)
\end{equation*}
is a polynomial in $n$ of degree at most $\sum_{u=1}^{u'}(k_u+1)+\sum_{v=1}^{v'}\frac{k'_v+2}{2}$. 
\end{thm}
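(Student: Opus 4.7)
The plan is to mirror the doubled-distinct argument of Section~\ref{maindoubled distinct}, adapting each ingredient to the self-conjugate Littlewood decomposition $\lambda=(\lambda_\tcore;\lambda^0,\ldots,\lambda^{t'-1})$ in which $\lambda^i$ and $\lambda^{t-1-i}$ are conjugate. First, I would introduce an analog of $\alpha$-admissibility: a function $g$ of self-conjugate partitions is $\alpha$-admissible if, for every $0\leq i\leq t'-1$ and every pair $\lambda$, $\lambda^+$ with $\lambda^+=(\alpha;\lambda^0,\ldots,\lambda^{i-1},\lambda^i\cup\{\square_i\},\lambda^{i+1},\ldots,\lambda^{t'-1})$, the increment $g(\lambda^+)-g(\lambda)$ is a polynomial in $c_{\square_i}$ whose coefficients are linear combinations of products $\prod_{j=0}^{t'-1}\Psi^{\tau^j}(\lambda^j)$ indexed by tuples $(\tau^0,\ldots,\tau^{t'-1})$ of usual partitions.

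Next, I would verify that the building blocks
$\sum_{\square\in\lambda,\, h_\square\equiv\pm j\,(\modsymb t)}h_\square^{2k}$ and
$\sum_{\square\in\lambda,\, c_\square\equiv j\,(\modsymb t)}c_\square^{k}$
are $\alpha$-admissible for every $0\leq j\leq t-1$ and every $k\in\mathbb{N}$. This is a direct computation from Lemmas~\ref{th:contentadd1} and \ref{th:hookdiff}: adding a box to $\lambda^i$ automatically adds a conjugate box to $\lambda^{t-1-i}$, and the corresponding contributions to hooks or contents of a fixed residue class split into two mirror-symmetric families of terms involving $c_{\square_i}$ and the corners $x_{j,s},y_{j,s}$ of each $\lambda^j$. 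The arguments are parallel to, but strictly simpler than, those in Lemma~\ref{th:admissible}, because there is no shifted component $\sla^0$ requiring the $\binom{c}{2}$-type variables.

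The core step is then the analog of Theorem~\ref{th:main}: for any $\alpha$-admissible functions $g_1,\ldots,g_v$ and any tuple $(\nu^0,\ldots,\nu^{t'-1})$ of usual partitions, there exists $r\in\mathbb{N}$ with
$$
D_t^r\Bigl(G_\lambda\prod_{u=1}^v g_u(\lambda)\prod_{i=0}^{t'-1}\Psi^{\nu^i}(\lambda^i)\Bigr)=0
$$
for every self-conjugate $\lambda$ with $\lambda_\tcore=\alpha$. I would prove this by induction on $(v,\sum_i|\nu^i|)$ in lexicographic order. For the inductive step, expand $D_t$ of the product via a Leibniz-type decomposition into seven pieces (as in the proof of Theorem~\ref{th:main}), use Lemma~\ref{th:SCGlambda} to kill the piece involving $D_tG_\lambda$, and then apply Lemma~\ref{th:aibi} together with Theorem~\ref{th:diffq1} (for the $\Psi^{\nu^i}$-factors) and the admissibility hypothesis (for the $g_u$-factors) to rewrite each remaining piece as a linear combination of expressions with strictly smaller induction parameter.

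The main obstacle will be the bookkeeping in this induction: after applying the Leibniz expansion one must verify that each resulting summand either has fewer admissible factors, or has the same number of factors but a strictly smaller $\sum_i|\nu^i|$, so that the lexicographic order really decreases. This is the same accounting that drives Theorem~\ref{th:main}, and it works here because each application of Lemma~\ref{th:aibi} strips at least one unit off the degree parameter $|\nu^i|$. Once the inductive theorem is established, specializing to the admissible factors $\sum_{h_\square\equiv\pm j_u}h_\square^{2k_u}$ and $\sum_{c_\square\equiv j'_v}c_\square^{k'_v}$ gives the vanishing $D_t^r(\cdots)=0$, and Theorem~\ref{th:SCmain3} immediately converts this into the polynomiality of $P(\mu,g;n)$. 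The degree bound $\sum_{u}(k_u+1)+\sum_{v}(k'_v+2)/2$ is read off from the induction exactly as in Corollary~\ref{th:main''}, by counting how many applications of $D_t$ are needed to annihilate each admissible factor of type $h^{2k}$ (which contributes $k+1$) or $c^{k'}$ (which contributes $(k'+2)/2$).
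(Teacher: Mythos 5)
Your proposal is correct and follows exactly the route the paper intends: the paper omits the proof of Theorem \ref{th:mainSC}, stating only that it is obtained ``by analogy with the results on doubled distinct partitions,'' and your argument is precisely that analogy carried out, using the self-conjugate Littlewood decomposition, an admissibility notion, Lemma \ref{th:SCGlambda}, Lemma \ref{th:aibi}, and Theorem \ref{th:SCmain3} in place of their doubled distinct counterparts. The only slight imprecision is that for the increments of the $\Psi^{\nu^i}(\lambda^i)$ factors (all ordinary partitions here, since there is no shifted component) the relevant tool is the usual-partition analogue of Theorem \ref{th:diffq1} coming from Lemma \ref{th:XY} and \cite{hanxiong1}, rather than Theorem \ref{th:diffq1} itself, which concerns strict partitions.
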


\section{Square cases for doubled distinct and self-conjugate partitions}
As described in Corollary \ref{th:DDSCsquare}, the polynomials mentioned
in Corollary \ref{th:main''} and Theorem \ref{th:mainSC} have explicit expressions for square cases.

\begin{proof}[Proof of  Corollary \ref{th:DDSCsquare}]

	(1) When $\la$ is a doubled distinct partition with $|\lambda|=2nt$ ($t$ odd) and $\lambda_\tcore=\emptyset$. By the proof of Lemma \ref{th:admissible}  we obtain
\begin{align*}
\frac{1}{G_\lambda}D\bigl(G_\lambda\, (\sum\limits_{\substack{\square\in \lambda}}c_{\square}^{2})\bigr)
&=\frac1t \sum_{0\leq i\leq m_0} \frac{\prod\limits_{\substack{1\leq
j\leq m_0}}\bigl({\binom{x_{0,i}}{2}}-{\binom{y_{0,j}}{2}}\bigr)}{\prod\limits_{\substack{0\leq j\leq m_0\\
j\neq i}}\bigl({\binom{x_{0,i}}{2}}-{\binom{x_{0,j}}{2}}\bigr)}
\\&\qquad \times \sum_{j=0}^{t-1}\bigl((tx_{0,i}-j)^2+(t-tx_{0,i}-j)^2\bigr)
\\& + \frac2t \sum_{1\leq k\leq t'} \sum_{0\leq i\leq m_k} \frac{\prod\limits_{\substack{1\leq
j\leq m_k}}(x_{k,i}-y_{k,j})}{\prod\limits_{\substack{0\leq j\leq m_k\\
j\neq i}}(x_{k,i}-x_{k,j})}\\
&\qquad \times \sum_{j=0}^{t-1}\bigl((tx_{k,i}+k-j)^2+(-tx_{k,i}+t-k-j)^2\bigr)
\\&=
\frac1t \sum_{0\leq i\leq m_0} \frac{\prod\limits_{\substack{1\leq
j\leq m_0}}\bigl({\binom{x_{0,i}}{2}}-{\binom{y_{0,j}}{2}}\bigr)}{\prod\limits_{\substack{0\leq j\leq m_0\\
j\neq i}}\bigl({\binom{x_{0,i}}{2}}-{\binom{x_{0,j}}{2}}\bigr)}
\\&\qquad \times  \bigl(4t^3\binom{x_{0,i}}{2}+t^3-t^2(t-1)+\frac{(t-1)t(2t-1)}{3} \bigr)
\\& + \frac2t \sum_{1\leq k\leq t'} \sum_{0\leq i\leq m_k} \frac{\prod\limits_{\substack{1\leq
j\leq m_k}}(x_{k,i}-y_{k,j})}{\prod\limits_{\substack{0\leq j\leq m_k\\
j\neq i}}(x_{k,i}-x_{k,j})}
\\
&\qquad \times \bigl(2t^3{x^2_{k,i}}+\sum_{j=0}^{t-1}(k-j)^2+\sum_{j=0}^{t-1}(t-k-j)^2 \bigr)
\\& = 2t|\lambda|+\frac{t(t^2+2)}{3},
\end{align*}
therefore
$$
\frac{1}{G_\lambda}D^2\bigl(G_\lambda\, (\sum\limits_{\substack{\square\in \lambda}}c_{\square}^{2})\bigr)=4t^2,
$$
and
$$
\frac{1}{G_\lambda}D^3\bigl(G_\lambda\, (\sum\limits_{\substack{\square\in \lambda}}c_{\square}^{2})\bigr)=0.
$$

(2)  When $\la$ is a self-conjugate partition with $|\lambda|=2nt$ ($t$ even) and $\lambda_\tcore=\emptyset$.  Similarly as in (1) we have
$$\frac{1}{G_\lambda}D\bigl(G_\lambda\, (\sum\limits_{\substack{\square\in \lambda}}c_{\square}^{2})\bigr)=2t|\lambda|+\frac{t(t^2-1)}{3},
$$
$$\frac{1}{G_\lambda}D^2\bigl(G_\lambda\, (\sum\limits_{\substack{\square\in \lambda}}c_{\square}^{2})\bigr)=4t^2,
$$
and
$$\frac{1}{G_\lambda}D^3\bigl(G_\lambda\, (\sum\limits_{\substack{\square\in \lambda}}c_{\square}^{2})\bigr)=0.
$$
Then identities \eqref{eq:DDc2} and \eqref{eq:SCc2}  follows from Theorems \ref{th:main3} and \ref{th:SCmain3}.
Notice that  $\sum_{\square\in \lambda}h_{\square}^{2}-\sum_{{\square\in \lambda}}c_{\square}^{2}=|\lambda|^2$ (see \cite{Macdonald}).
Identities~\eqref{eq:DDh2} and \eqref{eq:SCh2}  are consequences
of identities \eqref{eq:DDc2} and \eqref{eq:SCc2}.
\end{proof}

\section{Acknowledgments}
The second author
is supported  by grant [PP00P2\_138906] of the Swiss National Science Foundation and  Forschungskredit [FK-14-093] of the
University of Zurich. He also  thanks  Prof.
P.-O. Dehaye for the encouragements and helpful suggestions.

\bigskip
\bigskip
\bigskip


\end{document}